\documentclass[preprint,11pt]{elsarticle}

\usepackage[a4paper,margin=1in]{geometry}
\journal{Submitted for publication}
\biboptions{numbers,sort&compress}
\usepackage{amsmath,amssymb,amsthm,mathtools,bm,mathrsfs}
\usepackage{microtype}
\usepackage{enumitem}
\usepackage{booktabs}
\usepackage{xurl}

\usepackage[colorlinks=true,linkcolor=blue,citecolor=blue,urlcolor=blue]{hyperref}
\hypersetup{
  pdftitle={Double-logarithmic stability for a parabolic inverse problem on CTA manifolds},
  pdfauthor={Yujian Zheng, Zhiwen Duan, Shiqi Jing}
}

\numberwithin{equation}{section}

\newtheorem{theorem}{Theorem}[section]
\newtheorem{proposition}[theorem]{Proposition}
\newtheorem{lemma}[theorem]{Lemma}
\newtheorem{corollary}[theorem]{Corollary}
\newtheorem{assumption}{Assumption}
\theoremstyle{definition}
\newtheorem{definition}{Definition}[section]
\theoremstyle{remark}
\newtheorem{remark}[theorem]{Remark}

\newcommand{\Q}{\mathcal Q}
\newcommand{\U}{{\Sigma_{\mathrm D}}}
\newcommand{\V}{{\Sigma_{\mathrm N}}}
\newcommand{\D}{\mathcal D}
\newcommand{\X}{\mathcal X}
\newcommand{\Y}{\mathcal Y}
\newcommand{\G}{\mathcal G}
\newcommand{\Hc}{\mathcal H}

\newcommand{\supp}{\operatorname{supp}}

\newcommand{\dd}{\,\mathrm d}
\newcommand{\eps}{\varepsilon}
\newcommand{\ip}[2]{\left\langle #1,#2\right\rangle}

\begin{document}
\begin{frontmatter}
\title{Double-logarithmic stability for a parabolic inverse problem on CTA manifolds}
\author[math]{Yujian Zheng}
\ead{d202280014@hust.edu.cn}
\author[math,lab]{Zhiwen Duan}
\ead{duanzhw@hust.edu.cn}
\author[math]{Shiqi Jing\corref{cor1}}
\ead{d202580010@hust.edu.cn}
\cortext[cor1]{Corresponding author.}
\address[math]{School of Mathematics and Statistics, Huazhong University of
Science and Technology, Wuhan 430074, Hubei, China}
\address[lab]{Hubei Key Laboratory of Engineering Modeling and Scientific
Computing, Huazhong University of Science and Technology, Wuhan 430074, Hubei, China}
\begin{abstract}
We establish a double-logarithmic stability estimate for the recovery of
a time-dependent potential in the parabolic equation
$(c^{-1}\partial_t-\Delta_g+q)u=0$ on a conformally transversally
anisotropic manifold. The partial input--output map associates an initial
state and lateral Dirichlet data supported in a prescribed open set with
the terminal state and the Neumann trace on another open set. These sets
contain the positive and negative boundary parts determined by a limiting
Carleman weight, respectively. The measurement discrepancy is the operator
norm of the difference of two such maps, whose range has improved Sobolev
regularity. We prove stability under a uniform stability assumption for
the attenuated geodesic ray transform on a family of non-tangential
geodesics. This assumption is verified when the transversal manifold is
simple and for regular families on certain non-simple manifolds. The proof
uses Gaussian beam quasimodes with a uniform $O(h^{1/2})$ concentration
estimate, geometric optics solutions obtained from boundary Carleman
estimates, and stability of the geodesic ray transform for small
attenuation. Quantitative analytic continuation for Hilbert-space-valued
Fourier transforms and Sobolev interpolation yield the double-logarithmic
modulus.
\end{abstract}
\begin{keyword}
Inverse parabolic problem \sep Stability \sep Partial boundary data \sep
CTA manifold \sep Gaussian beams \sep Attenuated geodesic ray transform
\MSC[2020] 35R30 \sep 35K20 \sep 53C21
\end{keyword}
\end{frontmatter}

\section{Introduction and main results}\label{sec:intro}
\enlargethispage{2pt}

Let $(M,g)$ be a smooth compact oriented Riemannian manifold of dimension
$n\geq3$ with smooth boundary. We assume that $(M,g)$ is
\emph{conformally transversally anisotropic} (CTA), in the sense that
\begin{equation}\label{eq:cta}
 M\Subset\mathbb R\times M_0^{\mathrm{int}},
 \qquad g=c(x)(\dd x_1^2\oplus g_0),
\end{equation}
where $(M_0,g_0)$ is a smooth compact $(n-1)$-dimensional manifold
with boundary, called the \emph{transversal manifold}, and $c$ is a known
smooth positive conformal factor. For $T>0$, denote the
\emph{space-time cylinder} and its \emph{lateral boundary} by
\[
 Q=(0,T)\times M^{\mathrm{int}},\qquad
 \Sigma=(0,T)\times\partial M,
\]
respectively. We consider the parabolic initial-boundary value problem
\begin{equation}\label{eq:ibvp}
\begin{cases}
 (c^{-1}\partial_t-\Delta_g+q(t,x))u=0&\text{in }Q,\\
 u=f&\text{on }\Sigma,\\
 u(0,\cdot)=u_0&\text{in }M.
\end{cases}
\end{equation}
The factor $c^{-1}$ in front of the time derivative is part of the model.  It
allows an exact conformal reduction to the product metric, changing only the
zeroth-order coefficient; see Section~\ref{sec:framework}.  Thus the time coefficient is chosen so that conformal reduction gives
the heat operator with a potential on the product manifold.  Unless \(c=1\), the result is not a statement
for the ordinary operator \(\partial_t-\Delta_g+q\) with the same metric.
The geometry in \eqref{eq:cta} is the natural manifold setting associated
with the limiting Carleman weight \(x_1\); see
\cite{DSFKSU2009,KenigSalo2013}.  Gaussian beam constructions permit the
simple-transversal assumption used in admissible geometries to be replaced by
information along a suitable family of non-tangential geodesics
\cite{DSFKLS2016}.

The limiting Carleman weight is $\varphi(x)=x_1$.  With $\nu$ denoting the
outward unit normal, define the closed positive and negative boundary parts
\[
 \partial M_+=\{x\in\partial M:\partial_\nu\varphi(x)\geq0\},
 \qquad
 \partial M_-=\{x\in\partial M:\partial_\nu\varphi(x)\leq0\}.
\]
Their intersection is the \emph{tangential boundary} (or glancing set),
\[
 \partial M_{\mathrm{tan}}
 =\{x\in\partial M:\partial_\nu\varphi(x)=0\}
 =\partial M_+\cap\partial M_-.
\]
Our notation $\partial M_\pm$ includes this zero set; their strict-sign
parts are $\partial M_\pm\setminus\partial M_{\mathrm{tan}}$.
Let $\Gamma_{\mathrm D},\Gamma_{\mathrm N}\subset\partial M$ be the
open \emph{Dirichlet input set} and \emph{Neumann observation set},
with smooth relative boundaries, satisfying
\begin{equation}\label{eq:boundary-neighborhoods}
 \partial M_+\Subset \Gamma_{\mathrm D},
 \qquad \partial M_-\Subset \Gamma_{\mathrm N},
 \qquad
 \U=(0,T)\times \Gamma_{\mathrm D},\quad \V=(0,T)\times \Gamma_{\mathrm N}.
\end{equation}
The corresponding lateral input and observation regions are denoted by
$\Sigma_{\mathrm D}$ and $\Sigma_{\mathrm N}$, respectively.
The spatial sets may overlap around $\partial M_{\mathrm{tan}}$.
Compactness
implies that for some $\delta_0>0$,
\begin{equation}\label{eq:strict-sign}
 \partial M\setminus \Gamma_{\mathrm D}\subset\{\partial_\nu\varphi\leq-\delta_0\},
 \qquad
 \partial M\setminus \Gamma_{\mathrm N}\subset\{\partial_\nu\varphi\geq\delta_0\}.
\end{equation}
This fixed strict-sign property is indispensable in a quantitative argument.
The smooth relative-boundary convention causes no loss for arbitrary open
neighborhoods in \eqref{eq:boundary-neighborhoods}: choose smaller smooth
neighborhoods still containing the respective closed faces. Inclusion of
their supported input spaces and restriction of their observed output
spaces are bounded, so their discrepancy is bounded by the discrepancy
on the originally given open sets.

The partial input--output map sends the initial state and the lateral
Dirichlet datum supported in $\Sigma_{\mathrm D}$ to the terminal state
and the Neumann trace on $\Sigma_{\mathrm N}$:
\[
 (u_0,f)\longmapsto (u(T),\partial_\nu u|_\V).
\]
We measure differences of these maps in the operator norm $\|\cdot\|_*$
defined in Section~\ref{sec:framework}. Its inputs have orders $-1$ in space
and $(-1/2,-1/4)$ on the lateral boundary; its outputs have orders $1$ and
$(1/2,1/4)$, respectively. This stronger output norm is available because
the difference solution has zero initial and Dirichlet data.

We next formulate the quantitative geometric hypothesis. Let
$\pi_0:\mathbb R\times M_0\to M_0$ be the canonical projection, and fix a
smooth compact domain $K\Subset M_0^{\mathrm{int}}$ such that
$\pi_0(M)\Subset K^\circ$. Work in a smooth extension $M_1$ of $M_0$ (the choice
$M_1=M_0$ is permitted). Let
$SM_1=\{(y,v)\in TM_1:|v|_{g_0}=1\}$ be the unit sphere bundle.
Its incoming boundary is
\[
 \partial_{\mathrm{in}}SM_1
 =\{(y,v):y\in\partial M_1,\ |v|=1,\ \langle v,\nu_1(y)\rangle<0\}.
\]
Choose an open set $\Gamma\Subset\partial_{\mathrm{in}}SM_1$ on which
the first exit time $\tau_+(y,v)$ is finite and smooth, and the exit is
non-tangential. Fix a real cutoff $\chi\in C_c^\infty(\Gamma)$,
$0\leq\chi\leq1$. Integration over $\Gamma$ always uses the incoming
Liouville measure
\[
 d\mu(y,v)=|\langle v,\nu_1(y)\rangle|\,
 dS_{\partial M_1}(y)\,dS_y(v).
\]
For each $z\in\supp\chi$, let
$\gamma_z:[0,\tau_+(z)]\to M_1$ be the corresponding maximal unit-speed
geodesic. Assume there are smoothly varying times $b(z)<e(z)$ such that
$\gamma_z([b(z),e(z)])\subset M_0$, its endpoints lie on $\partial M_0$
and are non-tangential, and
\begin{equation}\label{eq:segment-exhaustion}
 \{r\in[0,\tau_+(z)]:\gamma_z(r)\in K\}\subset (b(z),e(z)).
\end{equation}
Thus the chosen segment contains every visit of the full ray to $K$.
Set $\gamma(r)=\gamma_z(b(z)+r)$, $L_\gamma=e(z)-b(z)$, and
denote this parameterized compact collection by $\G$. It is used only
to index beams; its parameter measure is never chosen independently.
Let $\Gamma_{\mathrm{geo}}\subset\partial_{\mathrm{in}}SM_0$ denote the open
set of initial data for non-tangential geodesics with finite exit time;
we identify each geodesic with its initial data. The selected segments
lie in $\Gamma_{\mathrm{geo}}$. Define the attenuated geodesic ray transform and
its localization by $\chi$ as
\[
 I_af(z)=\int_0^{L_\gamma}e^{ar}f(\gamma(r))\dd r,
 \qquad I_{a,\chi}f=\chi I_af.
\]
All data norms below are $\|I_{a,\chi}f\|_{L^2(\Gamma,d\mu)}$.
For $\supp f\subset K$, the full-ray transform is exactly
$I_a^{(1)}f(z)=e^{ab(z)}I_af(z)$. This smooth positive factor and its
inverse have uniformly bounded derivatives on $\supp\chi$ for bounded
$a$. This observation connects the beam parameterization to the full-ray
normal operator.

\begin{definition}[Uniform geometric conditions]\label{def:beam-regular}
For a compact family \(\G\Subset\Gamma_{\mathrm{geo}}\), we impose the following
uniform geometric conditions. There are constants
\[
 \epsilon_*,\delta_{\mathrm{tube}},\delta_{\mathrm{ret}},
 \rho_*,L_*,J_*,\kappa_*>0
\]
with the following properties.  Every \(\gamma\in\G\) has
\(L_\gamma\leq L_*\), extends to
\([-\epsilon_*,L_\gamma+\epsilon_*]\), and admits a Fermi-coordinate cover
by at most \(J_*\) tubes of transverse radius
\(\delta_{\mathrm{tube}}\). In a fixed finite atlas of incoming parameters,
the Fermi charts are chosen smoothly in those parameters. For every integer
$k\geq0$, the metric coefficients, inverse metric coefficients, volume
Jacobians and their reciprocals, and coordinate transition maps and their
inverses have $C^k$ bounds $B_k$ uniform over the selected compact parameter
support and the extended intervals $[-\epsilon_*,L_\gamma+\epsilon_*]$.
These bounds include mixed derivatives in the spatial and incoming
parameters on the compact chart overlaps used in the construction.
The smooth extension and charts are fixed; no uniformity over changes of
the extension or metric is asserted. Each estimate below uses only finitely
many of the bounds $B_k$. At each endpoint,
\[
 |\langle\dot\gamma(0),\nu_0(\gamma(0))\rangle_{g_0}|
 \geq\kappa_*,
 \qquad
 |\langle\dot\gamma(L_\gamma),\nu_0(\gamma(L_\gamma))\rangle_{g_0}|
 \geq\kappa_*.
\]
Require $\rho_*$ to be below half the injectivity radius of the fixed
extension, and choose \(\delta_{\mathrm{ret}}\) below the injectivity radius of a fixed
extension of \(M_0\).  Whenever \(r,s\in[0,L_\gamma]\) satisfy
\[
 |r-s|\geq\rho_*,
 \qquad
 d_{g_0}(\gamma(r),\gamma(s))<\delta_{\mathrm{ret}},
\]
parallel transport \(\mathsf P_{s\to r}\) along the unique short minimizing
geodesic is defined and
\[
 |\dot\gamma(r)-\mathsf P_{s\to r}\dot\gamma(s)|_{g_0}
 \geq\kappa_*.
\]
\end{definition}

The last condition excludes returns with nearly coincident oriented
tangent directions. It supplies the uniform separation of the gradients
of the real phases in Lemma~\ref{lem:beam-cross}. Nearly opposite tangent
directions do not cause this phase difference to become stationary, so no
separation of unoriented tangent lines is required. For the fixed smooth
geometry, the bounds $B_k$ follow from smooth dependence of the Fermi charts
on the incoming data and compactness, after shrinking the charts.  The parameter separation \(\rho_*\)
distinguishes two distinct local branches from neighboring points on the same
local branch.  A compact family of geodesics meeting a fixed interior set in a
simple manifold satisfies these conditions: otherwise compactness in the incoming-data
topology would produce a limiting self-intersection with separated parameter
values.

\begin{assumption}[Stability of the attenuated geodesic ray transform]\label{ass:visibility}
The family \(\G\) satisfies Definition~\ref{def:beam-regular}, and there exist $a_0>0$, $\ell>1$,
and $C_{\mathrm{stab}}>0$ such that
\begin{equation}\label{eq:visibility}
 \|f\|_{H^{-\ell}(M_0)}
 \leq C_{\mathrm{stab}}\|I_{a,\chi}f\|_{L^2(\Gamma,d\mu)},
 \qquad |a|\leq a_0,\quad f\in C_c^\infty(M_0^{\mathrm{int}}),\quad\supp f\subset K.
\end{equation}
\end{assumption}

By approximation, \eqref{eq:visibility} also holds for
\(f\in H^1(M_0)\) supported in \(K\): use interior-supported smooth approximation in the fixed compact
domain $K$ (chosen with a margin around the projection of $M$), use the
continuous embedding \(H^1\hookrightarrow H^{-\ell}\), and use the boundedness
of \(I_{a,\chi}:H^1(M_0)\to L^2(\Gamma,d\mu)\), uniformly for \(|a|\leq a_0\).  We do not
assert an extension to arbitrary \(H^{-\ell}\) distributions, for which the
\(L^2(\Gamma,d\mu)\) ray transform need not be defined.  The geometric conditions in Definition~\ref{def:beam-regular} make the
Gaussian beam estimates uniform.  The assumption is a quantitative
replacement of ray-transform injectivity and is strictly stronger than the
hypothesis needed for uniqueness.  It is verified on simple transversal
manifolds and on regular ray families; see Theorem~\ref{cor:simple}
and Corollary~\ref{cor:regular}.

Fix $m>0$ and define the admissible class of potentials
\begin{equation}\label{eq:potential-class}
 \Q_m=\{q\in W^{1,\infty}(Q;\mathbb R):
              \|q\|_{W^{1,\infty}(Q)}\leq m\}.
\end{equation}
We write \(H^{-1}(Q)=(H_0^1(Q))'\), with the equivalent norm obtained by
restriction from a fixed product neighborhood of \(\overline Q\).
For $s>0$, define the continuous increasing stability modulus
\begin{equation}\label{eq:modulus}
 \Psi_s(r)=\begin{cases}
 0,&r=0,\\
 r+(\log|\log r|)^{-s},&0<r<e^{-e},\\
 r+1,&r\geq e^{-e}.
 \end{cases}
\end{equation}

\begin{theorem}[Simple transversal manifold]\label{cor:simple}
Assume \eqref{eq:cta} and \eqref{eq:boundary-neighborhoods}, and suppose
that $(M_0,g_0)$ is simple. For every $0<s<1/8$ there is $C_s>0$,
depending only on the fixed geometry, $T,m,\Gamma_{\mathrm D},\Gamma_{\mathrm N},s$, such that
\[
 \|q_1-q_2\|_{H^{-1}(Q)}\leq
 C_s\Psi_s\!\left(\|\Lambda^{\U,\V}_{c,g,q_1}
 -\Lambda^{\U,\V}_{c,g,q_2}\|_*\right)
\]
for all $q_1,q_2\in\Q_m$ agreeing on $\Sigma$. No ray transform stability assumption is required in addition to simplicity.
\end{theorem}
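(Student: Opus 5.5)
The plan is to derive the theorem from the general stability result under Assumption~\ref{ass:visibility}, which the paper proves later. It then suffices to produce a family \(\G\) and a cutoff \(\chi\) satisfying Definition~\ref{def:beam-regular}, together with the estimate \eqref{eq:visibility} with some \(\ell>1\), \(a_0>0\), using only simplicity of \((M_0,g_0)\). The modulus \(\Psi_s\) with \(0<s<1/8\) should be exactly what the general theorem gives once the exponent bookkeeping for this \(\ell\) is fixed. So the proof reduces to two geometric facts about simple manifolds.

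\textbf{Step 1: choice of \(\G\) and the uniform conditions.} Embed \(M_0\) into a slightly larger simple manifold \(M_1\); simplicity is an open condition, so this is possible. Take \(\Gamma\Subset\partial_{\mathrm{in}}SM_1\) to be a neighbourhood of all incoming data whose geodesics meet \(K\), and choose \(\chi=1\) on that set. Strict convexity of \(\partial M_1\) makes every geodesic meeting \(K\) exit non-tangentially, with \(\tau_+\) smooth. Since \(M_1\) is nontrapping and has no conjugate points, each such geodesic enters and leaves \(M_0\) exactly once. Hence \(b(z),e(z)\) are the entry and exit times into \(M_0\), they vary smoothly, and \eqref{eq:segment-exhaustion} holds. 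The uniform \(C^k\) bounds on Fermi charts follow from compactness and smooth dependence on the incoming data. The endpoint transversality constant \(\kappa_*\) comes from strict convexity plus compactness. The return condition is vacuous for small \(\delta_{\mathrm{ret}}\). Indeed, geodesics in a simple manifold are injective and minimizing, so \(d_{g_0}(\gamma(r),\gamma(s))=|r-s|\geq\rho_*\). Choosing \(\delta_{\mathrm{ret}}<\rho_*\) leaves no pairs to check, as the remark after Definition~\ref{def:beam-regular} indicates.

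\textbf{Step 2: stability of \(I_a\).} For \(a=0\) I would invoke the known stability of the geodesic ray transform on simple manifolds (Stefanov--Uhlmann). The normal operator \(N_0=I_0^*I_0\) is an elliptic pseudodifferential operator of order \(-1\) and is injective. This gives
\[
\|f\|_{L^2}\le C\|N_0 f\|_{H^1(M_1)}\le C'\|I_0 f\|_{H^{1}(\partial_{\mathrm{in}}SM_1)}
\]
for \(\supp f\subset K\). This bound uses a derivative of the data, whereas \eqref{eq:visibility} demands the \(L^2\) data norm. I would therefore trade in the other direction:
\[
\|f\|_{H^{-1/2}}\le C\|N_0 f\|_{H^{1/2}(M_1)},
\]
and the boundedness of \(I_0^*:L^2(\Gamma,d\mu)\to H^{1/2}(M_1)\) for compactly localized \(f\) (standard for simple metrics) gives \(\|f\|_{H^{-1/2}}\le C\|I_0 f\|_{L^2}\). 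This is weaker than \(\ell>1\) requires, so it is fine. Replacing \(I_a^{(1)}\) by \(I_a\) only costs the smooth factor \(e^{ab(z)}\).

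For small \(|a|\) I would argue perturbatively. The family \(N_a=I_a^*I_a\) is a smooth family of elliptic order \(-1\) operators, and \(\|(N_a-N_0)f\|_{H^{1/2}}\le C|a|\,\|f\|_{H^{-1/2}}\) on functions supported in \(K\). This holds because the weight \(e^{a(r+r')}\) differs from \(1\) by \(O(|a|)\) in all symbol seminorms. Absorbing this term for \(|a|\le a_0\) small yields \eqref{eq:visibility} with \(\ell=1/2\), hence with any \(\ell>1\).

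\textbf{Main obstacle.} I expect the main obstacle to be making the perturbation uniform. The \(H^{1/2}\) mapping property of \(I_a^*\) and the \(O(|a|)\) symbol bound must be established with constants independent of \(a\). The Stefanov--Uhlmann estimate must also be used in its injective form on the enlarged manifold \(M_1\), not only modulo smoothing operators. I would first try to write \(N_a\) explicitly in exponential coordinates, where its kernel is \(e^{a(\cdot)}\) times the \(N_0\) kernel, which makes the \(a\)-dependence transparent. Once \eqref{eq:visibility} holds, the general theorem applies verbatim and yields the stated estimate, with \(C_s\) depending only on the listed data.
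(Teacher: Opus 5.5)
Your proposal is correct and follows essentially the paper's route. Like Proposition~\ref{prop:simple-ray}, you verify Assumption~\ref{ass:visibility} by extending to a simple $M_1$, taking $\chi=1$ near the rays meeting $K$, proving an $H^{-1/2}$ estimate for $I_{0,\chi}$ from the elliptic order $-1$ normal operator with injectivity and compactness, absorbing an $O(|a|)$ perturbation, and embedding into $H^{-\ell}$ with $\ell>1$ close to $1$ before applying Theorem~\ref{thm:main}. The only difference is cosmetic: you perturb $N_a-N_0$ as a map $H^{-1/2}\to H^{1/2}$ and then use the dual bound $I_a^*:L^2\to H^{1/2}$ (this should be taken with a spatial cutoff near $K$, not in the full $H^{1/2}(M_1)$ norm), whereas the paper perturbs the transform directly via $(I_{a,\chi}-I_{0,\chi})f=a\int_0^1 J_{\theta a,\chi}f\,d\theta$ with $J_{\eta,\chi}:H^{-1/2}\to L^2$.
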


\begin{theorem}[Conditional CTA stability]\label{thm:main}
Assume \eqref{eq:cta}, \eqref{eq:boundary-neighborhoods}, and
Assumption~\ref{ass:visibility}.  Let $0<s<1/[4(\ell+1)]$.  There is a constant
$C>0$, depending only on the a priori geometry, $T$, $m$, $\Gamma_{\mathrm D}$, $\Gamma_{\mathrm N}$,
$a_0,\ell,C_{\rm stab},s$, the cutoff $\chi$, the fixed incoming density,
and the uniform geometric constants in Definition~\ref{def:beam-regular},
such that for any $q_1,q_2\in\Q_m$ satisfying
\begin{equation}\label{eq:boundary-agreement}
 q_1=q_2\quad\text{on }\Sigma,
\end{equation}
one has
\begin{equation}\label{eq:main-stability}
 \|q_1-q_2\|_{H^{-1}(Q)}
 \leq C\Psi_s\!\left(
 \|\Lambda^{\U,\V}_{c,g,q_1}
       -\Lambda^{\U,\V}_{c,g,q_2}\|_*
 \right).
\end{equation}
\end{theorem}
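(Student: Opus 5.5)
We follow the standard route for Calderón-type stability on CTA manifolds: CGO solutions built from Gaussian beams, an integral identity, Fourier transform in \(x_1\) and \(t\), stability of the attenuated ray transform, and finally analytic continuation with interpolation. Set \(q=q_1-q_2\), extended by zero outside \(Q\); this extension is admissible because \(q_1=q_2\) on \(\Sigma\), so \(q\in H^1_0\)-type near the lateral boundary with \(W^{1,\infty}\) bounds. Write \(\eps=\|\Lambda_{c,g,q_1}^{\U,\V}-\Lambda_{c,g,q_2}^{\U,\V}\|_*\).

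\textbf{Step 1: conformal reduction and integral identity.} The factor \(c^{-1}\) lets us conjugate by \(c^{(n-2)/4}\), reducing to \(\partial_t-\Delta_{\tilde g}+\tilde q\) with \(\tilde g=dx_1^2\oplus g_0\), where \(\tilde q_1-\tilde q_2=c\,q\). Take a forward solution \(u_1\) for \(q_1\) and a backward (adjoint) solution \(u_2\) for \(q_2\). By Green's formula,
\[
\int_Q q\,u_1u_2\,c^{-1}\,dV\,dt
\]
equals boundary terms on \(\Sigma\setminus\V\), at \(t=0\), \(t=T\), and on \(\V\). The terms on \(\V\) and at \(t=T\) are bounded by \(\eps\) times the norms of the inputs in the \((-1),(-1/2,-1/4)\) spaces paired against the outputs. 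The remaining term \(\int_{\Sigma\setminus\V}\partial_\nu w\,u_2\) (with \(w\) the difference solution) lies over \(\{\partial_\nu\varphi\geq\delta_0\}\). We control it by a boundary Carleman estimate with weight \(\pm x_1/h\), using \eqref{eq:strict-sign}. This gives a bound of the form \(Ch^{-1/2}e^{C/h}\eps\) plus an \(O(h^{1/2})\) remainder.

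\textbf{Step 2: solutions.} Following DSFKLS, take \(u_j=e^{\mp(x_1+i\lambda)/h}\big(v_j+r_j\big)\), with a time factor \(e^{\pm(\tau+\dots)t}\) chosen to absorb the parabolic term. Here \(v_j\) are Gaussian beam quasimodes on \(M_0\) along \(\gamma\in\G\), whose errors are \(O(h^{1/2})\) uniformly by Definition~\ref{def:beam-regular}; the self-intersections are handled by the cross-term lemma. The remainders \(r_j\) come from the Carleman estimate, with \(\|r_j\|=O(h^{1/2})\), and \(u_j\) is arranged to vanish on \(\Sigma\setminus\Sigma_{\mathrm D}\) for the input side.

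Inserting these into the identity and letting the concentration act, we get for \(|a|\le a_0\) (with \(a\) tied to \(\lambda\), frequency \(\xi\) in \(x_1\), and \(\omega\) in \(t\)):
\[
|I_{a,\chi}\widehat{q}(\omega,\xi,\cdot)|\lesssim h^{1/2}+e^{C/h}\eps,
\]
uniformly in \(z\) and in bounded \((\omega,\xi)\), after integrating the beam profile against \(\chi\).

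\textbf{Step 3: ray transform stability.} Assumption~\ref{ass:visibility} applies to the \(H^1\) function \(\widehat q(\omega,\xi,\cdot)\), which is supported in \(K\). It gives \(\|\widehat q(\omega,\xi)\|_{H^{-\ell}(M_0)}\lesssim h^{1/2}+e^{C/h}\eps\) for \(|(\omega,\xi)|\le c_0\) small. Optimizing \(h\sim|\log\eps|^{-1}\) yields a bound \(|\log\eps|^{-1/2}\) on a small ball.

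\textbf{Step 4: analytic continuation and interpolation.} This is the main obstacle. \(\widehat q\) is an entire \(H^{-\ell}\)-valued function of exponential type, since \(q\) has compact support in \((t,x_1)\). Quantitative analytic continuation (a Hilbert-space-valued Vessella-type estimate) propagates the bound from the small ball to \(|(\omega,\xi)|\le R\), with a logarithmic loss. We then split \(\|q\|_{H^{-\ell-1}}\) into low frequencies \(\le R\), which are controlled, and high frequencies, which are \(O(R^{-1})\) by the \(W^{1,\infty}\) bound. Optimizing \(R\) produces the double logarithm. Finally, interpolating between \(H^{-\ell-1}\) and the \(H^1\) a priori bound gives the \(H^{-1}\) estimate. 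The exponents combine to the restriction \(s<1/[4(\ell+1)]\): a factor \(1/2\) from \(h^{1/2}\), a factor \(1/2\) from continuation, and \(1/(\ell+1)\) from interpolation. The delicate points are keeping all constants uniform in \(\gamma\in\G\), and ensuring that the analytic continuation step only loses one logarithm.
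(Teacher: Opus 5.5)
\textbf{Main gap: Step 4.} Step 4 is where the second logarithm and the range $s<1/[4(\ell+1)]$ are produced, and you only assert it. The rate $h^{1/2}$ plays no role in the double-logarithmic exponent: replacing it by $h$ only doubles $|\log E|$. What decides the exponent is how the continuation exponent depends on $R$.

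A Vessella-type estimate used at a fixed scale relies only on analyticity in a strip of fixed width. There the exponent at distance $R$ decays like $e^{-cR}$, so $R$ can grow only like $\log|\log E|$. Since $|\log E|\asymp\log|\log\eps|$, this gives a \emph{triple} logarithm.

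The missing idea is to use the exponential type on the whole half-plane, as in Lemma~\ref{lem:continuation}. For a norm-one functional $\ell_0$, the function $\log|\ell_0(\widehat F(\tau,z))|-S\operatorname{Im}z$ (minus a constant) is nonpositive and subharmonic in $\{\operatorname{Im}z>0\}$, and it is at most $\log E$ on $[-\xi_0,\xi_0]$. It is therefore bounded by $\log E$ times the harmonic measure of that interval, which at $\xi+i$ is $\geq c/R^2$ for $|\xi|\le R$.

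This gives the bound $E^{c/R^2}$ only at $\xi+i$; on the real axis outside the interval the harmonic measure vanishes. The paper returns to real frequencies by applying Plancherel to $e^{x_1}F$, whose Fourier transform is $\widehat F(\tau,\xi+i)$. Only such a polynomial exponent allows $R=|\log h|^{1/4}$ while losing just one extra logarithm.

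\textbf{Exponent bookkeeping.} Your accounting does not follow from your steps. Since $q_1=q_2$ only on $\Sigma$, the zero extension jumps at $t=0,T$. It lies in $H^\lambda_{t,x_1}L^2_{x'}$ only for $\lambda<1/2$ (Lemma~\ref{lem:zero-extension}), and the $H^1$ bound holds only in $x'$. So there is no space-time $H^1$ or $W^{1,\infty}$ bound to interpolate against.

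The paper instead interpolates in $x'$ alone, from $H^{-\ell}$ and $H^1$ to $H^{-1}$, with exponent $2/(\ell+1)$, and pays $R^{-\lambda}$ for the tail in $L^2_{t,x_1}$. The actual factorization is $\lambda\cdot\frac14\cdot\frac{2}{\ell+1}$ with $\lambda\uparrow\frac12$. In your $\frac12\cdot\frac12\cdot\frac1{\ell+1}$, the factor attributed to $h^{1/2}$ is spurious, and the other two depend on choices you have not made: $R$ as a power of $|\log h|$, and which norm is interpolated in which variables.

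\textbf{Smaller points.}
\begin{enumerate}
\item Restricting the time frequency to a small ball is unnecessary. The factor $e^{-i\tau t}$ costs only $h^2|\tau|$ in the residual and $(1+R)^N$ in the traces, so the paper obtains \eqref{eq:low-frequency} for all $|\tau|\le R\le h^{-1/2}$ and continues only in $\xi$. A second half-plane continuation in $\tau$ would degrade the exponent to $c/R^4$.
\item Your Kenig--Sj\"ostrand--Uhlmann-type Carleman control of the term on $\Sigma\setminus\Sigma_{\mathrm N}$ needs an $O(1)$ bound on the $L^2$ trace of $v_2+r_2$ there. An interior $L^2$ estimate for $r_2$ does not give this. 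You need the trace-controlled solvability of Proposition~\ref{prop:solvability}. With it you can simply prescribe $r_2=-v_2$ outside a smaller neighbourhood of $\partial M_-$, and then, as in \eqref{eq:quantitative-identity}, only measured terms appear.
\item Using the spatial weight $x_1/h$ instead of $\Phi_h$ is harmless. It gives $e^{C/h}$ rather than $e^{C/h^2}$, and only $|\log h|$ enters the final modulus.
\end{enumerate}
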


\begin{corollary}[Regular families of geodesics]
\label{cor:regular}
Suppose that $M_0\Subset M_1^{\mathrm{int}}$. Let
$\Gamma\Subset\partial_{\mathrm{in}}SM_1$ be open, fix
$\chi\in C_c^\infty(\Gamma)$, and assume that the first-exit time is
finite and smooth on a neighborhood of $\supp\chi$. Use the incoming
density $d\mu$ specified above. Assume the following conditions.
\begin{enumerate}[label=\textup{(\roman*)}]
\item There are smooth entry and exit times $b(z)<e(z)$ such that
the selected interval is a connected non-tangential segment in $M_0$.
For some fixed open $O$, $K\Subset O\Subset M_0^{\mathrm{int}}$,
every visit of the full ray to $\overline O$ lies in $(b(z),e(z))$.
The selected segments satisfy Definition~\ref{def:beam-regular}. Other components
of a ray in $M_0$, if present, are disjoint from $\overline O$.
\item On a neighborhood of $\supp\chi$, the full rays are embedded
and have no conjugate points between any two of their points.
For every $(x,\zeta)\in T^*\overline O\setminus0$, a ray with
$\chi(z)\ne0$ passes through $x$ with $\zeta(\dot\gamma_z)=0$.
Here absence of conjugate points means that the differential of
the exponential map in the initial direction and travel time has
full rank away from zero travel time. Thus, over $\overline O$, the
selected rays form a regular family of curves in the sense of
\cite[Definition~1]{FrigyikStefanovUhlmann2008}. The incoming boundary
$\partial_{\mathrm{in}}SM_1$ provides the required transverse parametrization.
\item $I_{0,\chi}$ is injective on $L^2(M_1)$ functions supported in $K$.
\end{enumerate}

Then \eqref{eq:main-stability} holds for every \(0<s<1/8\), without imposing
Assumption~\ref{ass:visibility} separately.  The manifold \(M_0\) itself may
be non-simple; geodesics outside \(\Gamma\) may have conjugate points or be
trapped.
\end{corollary}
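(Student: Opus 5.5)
The plan is to reduce Corollary~\ref{cor:regular} to Theorem~\ref{thm:main}. It suffices to verify Assumption~\ref{ass:visibility} for some \(\ell>1\), some \(a_0>0\) and some \(C_{\mathrm{stab}}\). The range of \(s\) must then be \(0<s<1/8\), so I will show the assumption holds with \(\ell\) arbitrarily close to \(1\) from above, namely \(\ell=1+\eta\). Then \(1/[4(\ell+1)]\) exceeds any prescribed \(s<1/8\) once \(\eta\) is small. Definition~\ref{def:beam-regular} is already part of hypothesis (i), so only the stability estimate \eqref{eq:visibility} remains.

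\emph{Step 1: the normal operator at \(a=0\).} Consider \(N_0=(I_{0,\chi}^{(1)})^*\chi^2 I^{(1)}_{0}\), formed with the full-ray transform \(I^{(1)}_0\) and the measure \(d\mu\), and restricted to functions supported in \(K\). By (ii) the rays over \(\overline O\) form a regular family without conjugate points in the sense of \cite{FrigyikStefanovUhlmann2008}. Their result then gives that \(\eta_O N_0\eta_O\), with \(\eta_O\) a cutoff equal to \(1\) near \(K\) and supported in \(O\), is an elliptic pseudodifferential operator of order \(-1\) on \(O\). Ellipticity at every \((x,\zeta)\) uses the covering condition in (ii), namely that some ray with \(\chi\neq0\) is conormal to \(\zeta\). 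Hypothesis (i) ensures that only the selected segments reach \(\overline O\), so on \(K\) the localized transform sees only these segments.

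\emph{Step 2: stability at \(a=0\).} Ellipticity yields the estimate \(\|f\|_{H^{-1/2}}\le C\|N_0f\|_{H^{1/2}}+C\|f\|_{H^{-N}}\). Hypothesis (iii) gives injectivity of \(N_0\) on \(L^2\) functions supported in \(K\), since \(\langle N_0f,f\rangle=\|\chi I^{(1)}_0f\|^2\). A standard compactness argument removes the \(H^{-N}\) term. Next I will use \(\|N_0f\|_{H^{1/2}}\lesssim\|\chi I_0 f\|_{L^2(\Gamma,d\mu)}\), which holds because \(I^*\) maps \(L^2(\Gamma)\) to \(H^{1/2}\) locally. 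Here the factor \(e^{ab(z)}\) is harmless. This gives \(\|f\|_{H^{-1/2}}\le C\|I_{0,\chi}f\|\), and hence the weaker estimate \(\|f\|_{H^{-\ell}}\le C\|I_{0,\chi}f\|\) for every \(\ell\ge 1/2\), in particular for \(\ell=1+\eta\).

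\emph{Step 3: small attenuation (main obstacle).} I will treat \(N_a\) as a perturbation of \(N_0\), with \(\|(N_a-N_0)f\|_{H^{1/2}}\le C|a|\,\|f\|_{H^{-1/2}}\) for \(f\) supported in \(K\). This is plausible because \(N_a\) has a Schwartz kernel of the same conormal type, multiplied by a weight \(e^{a(r+r')}\) that is smooth in \(a\) with uniformly bounded derivatives, given \(L_\gamma\le L_*\). Its \(a\)-derivative is therefore a pseudodifferential operator of order \(-1\) with uniform bounds. For \(|a|\le a_0\) small this gives \(\|f\|_{H^{-1/2}}\le 2C\|N_af\|_{H^{1/2}}\le C'\|I_{a,\chi}f\|\). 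Making this uniform requires checking that the symbol estimates in \cite{FrigyikStefanovUhlmann2008} depend only on finitely many \(C^k\) bounds of the weight. I expect this to be the delicate point. A fallback is to write \(I_{a,\chi}f-I_{0,\chi}f=\chi\int_0^{L_\gamma}(e^{ar}-1)f(\gamma(r))\,dr\) and bound it by \(C|a|\,\|f\|_{H^{-1/2}}\) via the same mapping property. Both routes give \eqref{eq:visibility} with \(\ell=1+\eta\). Theorem~\ref{thm:main} then yields \eqref{eq:main-stability} for every \(s<1/8\).
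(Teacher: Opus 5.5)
Your proposal is correct and follows essentially the paper's route (Proposition~\ref{prop:regular-ray}). The steps match: an elliptic order $-1$ normal operator from \cite{FrigyikStefanovUhlmann2008}, then compactness plus injectivity (iii) at $a=0$, then absorption of the $O(|a|)$ perturbation $I_{a,\chi}-I_{0,\chi}$ through the uniform $H^{-1/2}\to L^2$ bound (your fallback is exactly the paper's mean-value argument), and finally $H^{-1/2}\hookrightarrow H^{-\ell}$ with $\ell$ near $1$ and Theorem~\ref{thm:main}. Two details remain to be made explicit: (iii) is stated for $L^2$ functions, so the $H^{-1/2}$ limit in the compactness argument must first be shown smooth by elliptic regularity, and $H^{-\ell}(M_1)$ and $H^{-\ell}(M_0)$ norms must be shown equivalent on distributions supported in $K$.
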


\subsection*{Relation to the companion uniqueness paper}

The related qualitative inverse problem on general CTA manifolds is
treated in the public preprint \cite{ZhengDuanJing2026}, which assumes
injectivity of the small-attenuation ray transforms and establishes
uniqueness. It does not establish a stability modulus.
Assumption~\ref{ass:visibility} is strictly stronger: it requires
a uniform stability estimate on a compact family of rays.  Consequently,
Theorem~\ref{thm:main} does not supersede the general CTA uniqueness theorem;
it gives a stronger conclusion on the class satisfying Assumption~\ref{ass:visibility}.
Corollary~\ref{cor:regular} gives a concrete genuinely non-simple subclass:
only the selected regular family must resolve \(T^*K\), while the unused
geodesics are unrestricted.

Several ingredients below have qualitative counterparts in
\cite{ZhengDuanJing2026}: the graph-space
formulation of the partial input--output map, the forward and transpose
Carleman estimates, the construction of two solutions with lateral traces
supported in \(\U\) and \(\V\), and the limiting Gaussian beam identity.  The
new points needed for stability are the improved range of the difference of
the input--output maps, an \(O(h^{1/2})\) concentration estimate uniform in the geodesic,
bounds for the exact solutions uniform in the frequency parameters, a quantitative inversion of
the attenuated ray transform, and a Hilbert-valued continuation estimate with
an explicit harmonic-measure exponent.  The shared qualitative framework is included because stability requires
uniform estimates for each step of the argument. Weak convergence of beam measures
is replaced by a uniform $O(h^{1/2})$ error; exact solutions for fixed
parameters are replaced by bounds uniform in $(\gamma,\sigma,\tau)$;
injectivity is replaced by stable inversion for small attenuation; and
vanishing of the Fourier transform on a fixed interval is replaced by
Hilbert-valued continuation to an expanding interval. These estimates,
together with interpolation, yield the explicit double-logarithmic modulus.
The quantitative arguments needed below are proved in the present manuscript.

The proof combines two quantitative mechanisms.  The parabolic Carleman
weight produces a factor $e^{C/h^2}$ in the measurement term.  Optimizing this
factor yields a first logarithm.  Gaussian beams recover the attenuated ray
transform only for a fixed interval of Euclidean frequencies.  An
analytic continuation argument based on subharmonicity enlarges that interval and
produces the second logarithm.  This is analogous to the extra logarithm in
the admissible elliptic problem of Caro and Salo \cite{CaroSalo2014}, but the
present parabolic construction is based on geometric optics solutions with prescribed lateral support
and an input--output map containing both temporal endpoint traces.

Choulli and Kian \cite{ChoulliKian2018} proved double-logarithmic stability for
a time-dependent parabolic potential in a Euclidean domain from a partial
Dirichlet-to-Neumann map, without a terminal observation. Our result permits
CTA geometry and restricts the lateral input support, but includes the
terminal state in the output and the initial state in the input. The two
measurement settings are therefore not directly comparable. In the present
setting no temporal cutoff is required in the exact solutions, which allows
the $O(h^{1/2})$ bulk remainder. The additional geometric task is to control
Gaussian beam concentration uniformly and stably invert the resulting
attenuated transform.

Logarithmic moduli are characteristic of coefficient recovery from boundary
measurements, beginning with the elliptic stability mechanism of
Alessandrini \cite{Alessandrini1988}.  For time-dependent parabolic
coefficients, quantitative continuation and Carleman estimates were developed
in \cite{ChoulliKian2013,ChoulliKian2018}; recent partial-data results for
reaction-diffusion and heat equations further illustrate the role of
geometrically restricted observations \cite{FeizmohammadiKianUhlmann2024}.
The second logarithm in the present result reflects the additional analytic
continuation from a fixed interval of Euclidean frequencies.

Recent work also considers other parabolic observation settings.
Alberola, Aroua, and Caro \cite{AlberolaArouaCaro2026} establish uniqueness
for a time-dependent heat potential on $\mathbb R^n$ from the
initial-to-final map under super-exponential spatial decay assumptions.
Their result uses no lateral boundary data and does not provide a stability
modulus. Klibanov \cite{Klibanov2026} proves uniqueness and stability for
the simultaneous recovery of a space--time-dependent coefficient and an
unknown initial condition in a general linear parabolic equation.
These settings differ from the partial input--output operator and CTA
geometry considered here.

Earlier parabolic coefficient-identification results include the
boundary response approach of Avdonin and Seidman
\cite{AvdoninSeidman1995} and the boundary spectral approach to heat
equations of Canuto and Kavian \cite{CanutoKavian2001}.
Completeness of products of solutions provides another general route
to uniqueness \cite{Isakov1991}. For restricted lateral observations,
Fan and Duan \cite{FanDuan2021} treat a parabolic potential, while
Bellassoued and Ben Fraj \cite{BellassouedFraj2021} obtain quantitative
results for convection--diffusion coefficients. Yamamoto's survey
\cite{Yamamoto2009} describes the parabolic Carleman methods underlying
many coefficient and source estimates. A recent JMAA contribution by
Nguyen Van Thang and Nguyen Van Duc \cite{NguyenNguyen2025} concerns
stability for a time-dependent coefficient in a one-dimensional
parabolic equation. Its coefficient class and observation setting
differ from the space--time potential and manifold geometry studied here.

On admissible manifolds, Mishra, Purohit, and Vashisth
\cite{MishraPurohitVashisth2025} established partial-data uniqueness for
time-dependent first- and zeroth-order perturbations, modulo the natural
gauge invariance. Their result is qualitative and does not provide a
stability modulus for the CTA input--output map considered here.
In Euclidean domains, Buisson \cite{Buisson2026} establishes partial-data
uniqueness for space--time-dependent convection and potential coefficients
from a partial Dirichlet-to-Neumann map. These results concern simultaneous
coefficient recovery, whereas the present work quantifies recovery of the
potential with the principal part fixed.

A closely related CTA result is due to Liu and Purohit
\cite{LiuPurohit2026}, who recover time-dependent convection and density
coefficients from a partial input--output operator, up to the natural gauge
invariance, under an injectivity assumption for attenuated ray transforms
of functions and one-forms on the transversal manifold. Their work shares
the CTA geometry and the use of Gaussian beams and Carleman estimates,
but establishes qualitative uniqueness rather than a stability modulus.
The present paper treats the scalar potential with the first-order term
fixed at zero and imposes a uniform quantitative stability hypothesis for
the scalar attenuated ray transform, together with uniform geometric
conditions on the selected rays. Under these stronger quantitative
hypotheses, uniform beam concentration and exact-solution estimates,
stable inversion, and Hilbert-valued analytic continuation yield the
double-logarithmic estimate. Thus our result quantifies potential recovery
on the stated class; it does not replace their simultaneous coefficient
uniqueness theorem.

On manifolds, the relation between geometric optics and ray transforms
also appears in the admissible advection--diffusion problem
\cite{KrupchykUhlmann2018} and in the time-dependent hyperbolic inverse
problems of Liu, Saksala, and Yan
\cite{LiuSaksalaYan2024,LiuSaksalaYan2025}. The latter works provide
qualitative geometric precedents; the present conclusion requires
uniform quantitative bounds in addition to injectivity. For this
reason the hypotheses on the selected geodesics include both uniform
geometric conditions and a stability estimate for the ray transform, and the regular-family corollary explicitly includes
injectivity instead of inferring it from conormal coverage alone.

The paper is organized as follows.  Section~\ref{sec:framework} records the
functional framework and conformal reduction.  Section~\ref{sec:beams}
establishes quantitative Gaussian beam concentration.  The Carleman
solvability and exact geometric optics solutions are constructed in
Section~\ref{sec:cgo}.  Section~\ref{sec:ray} derives the quantitative ray
transform estimate.  Section~\ref{sec:continuation} proves the continuation
lemma and completes the stability argument.  The results for simple manifolds and regular families are proved in
Section~\ref{sec:simple-proof}.

\section{Functional framework and conformal reduction}\label{sec:framework}

For $r,s\geq0$, define the anisotropic Sobolev space
\[
 H^{r,s}(\Sigma)=L^2(0,T;H^r(\partial M))
 \cap H^s(0,T;L^2(\partial M)),
\]
with the Hilbert norm
\[
 \|F\|_{H^{r,s}(\Sigma)}^2
 =\|F\|_{L^2(0,T;H^r(\partial M))}^2
  +\|F\|_{H^s(0,T;L^2(\partial M))}^2.
\]
For $r,s>0$, let $H_0^{r,s}(\Sigma)$ be the closure of
$C_c^\infty(\Sigma)$ in $H^{r,s}(\Sigma)$, and define
\[
 H^{-r,-s}(\Sigma)=\big(H_0^{r,s}(\Sigma)\big)'.
\]
If $W\subset\Sigma$ is open, $H^{\alpha,\beta}(W)$ denotes the space of
restrictions to $W$ of functions or distributions in
$H^{\alpha,\beta}(\Sigma)$.  It is equipped with the quotient norm
\[
 \|G\|_{H^{\alpha,\beta}(W)}
 =\inf\big\{\|F\|_{H^{\alpha,\beta}(\Sigma)}:
              F\in H^{\alpha,\beta}(\Sigma),\ F|_W=G\big\}.
\]
For $W=(0,T)\times W_0$ with smooth relative boundary, define
\begin{equation}\label{eq:supported-space}
 \widetilde H^{-1/2,-1/4}(W)
 =\{f\in H^{-1/2,-1/4}(\Sigma):\supp f\subset\overline W\},
\end{equation}
with the norm inherited from the global negative-order space.
This is the supported Sobolev space; closure and support are relative to $\Sigma$.

\begin{lemma}[Duality of restriction and supported trace spaces]\label{lem:local-duality}
There is an isometric identification, for the quotient and dual norms,
\[
 (H^{1/2,1/4}(W))'=\widetilde H^{-1/2,-1/4}(W).
\]
If $g=G|_W$ and $f$ belongs to the supported space, the local pairing is
$\langle g,f\rangle_W=\langle G,f\rangle_\Sigma$, independently of $G$.
In particular, $|\langle g,f\rangle_W|\leq
\|g\|_{H^{1/2,1/4}(W)}\|f\|_{\widetilde H^{-1/2,-1/4}(W)}$.
\end{lemma}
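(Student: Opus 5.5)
The plan is the standard Hilbert-space duality between a quotient space and the annihilator of a subspace. Set $X=H^{1/2,1/4}(\Sigma)$ and $N=\{F\in X:F|_W=0\}$. Then $N$ is closed, since restriction to $W$ is continuous into distributions on $W$. By definition, $H^{1/2,1/4}(W)$ with the quotient norm is isometric to $X/N$. The abstract fact $(X/N)'\cong N^\perp=\{\ell\in X':\ell|_N=0\}$ holds isometrically: a functional $\lambda$ on $X/N$ corresponds to $\ell=\lambda\circ\pi$, and $\|\ell\|_{X'}=\|\lambda\|$ follows directly from the infimum definition of the quotient norm. So the whole lemma reduces to two points. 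First, identify $X'$ with $H^{-1/2,-1/4}(\Sigma)$ through the $\Sigma$-pairing. Second, identify $N^\perp$ with the distributions in that space supported in $\overline W$.

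For the first point, I note that $\Sigma=(0,T)\times\partial M$ has a boundary only in the time variable. Since $1/4<1/2$, $C_c^\infty((0,T))$ is dense in $H^{1/4}(0,T)$. Together with density of smooth functions on the closed manifold $\partial M$, this gives $H_0^{1/2,1/4}(\Sigma)=H^{1/2,1/4}(\Sigma)$ with equivalent norms. Then $H^{-1/2,-1/4}(\Sigma)=X'$ by definition, and the pairing $\langle G,f\rangle_\Sigma$ extends the distributional pairing on $C_c^\infty(\Sigma)$. Here the norm on $X$ is the stated Hilbert norm, and the dual norm is taken with respect to it, so isometry is automatic.

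For the second point, suppose $f\in X'$ annihilates $N$. Any $\phi\in C_c^\infty(\Sigma\setminus\overline W)$ lies in $N$, so $f(\phi)=0$ and $\supp f\subset\overline W$. Conversely, take $f$ supported in $\overline W$ and $F\in N$. I must show $\langle F,f\rangle=0$; this is the main obstacle, because $F$ vanishes on $W$ only, and the pairing could in principle see the interface $(0,T)\times\partial W_0$. To handle it, I would use the smooth relative boundary of $W_0$ to approximate $F$ in $X$ by functions vanishing on a neighborhood of $\overline W$. Work locally near $\partial W_0$ in collar coordinates $(\rho,y')$, with $\rho>0$ outside $W_0$. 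The function $F$ vanishes for $\rho<0$. Translating, $F_\epsilon(\rho,y')=F(\rho-\epsilon,y')$ vanishes for $\rho<\epsilon$. Translation is continuous in $H^{1/2}$ tangentially and commutes with the time regularity, so combined with a partition of unity we get $F_\epsilon\to F$ in $X$. Each $F_\epsilon$ is supported away from $\overline W$. Mollifying it further gives elements of $C_c^\infty$ of the complement of $\supp f$, on which $f$ vanishes. Hence $\langle F,f\rangle=0$.

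This yields the isometry $(H^{1/2,1/4}(W))'\cong\widetilde H^{-1/2,-1/4}(W)$ with the inherited norm, and the pairing $\langle g,f\rangle_W=\langle G,f\rangle_\Sigma$. The pairing is independent of the extension $G$ precisely because $f\in N^\perp$. The bound $|\langle g,f\rangle_W|\le\|g\|\|f\|$ follows by taking the infimum over $G$ in $|\langle G,f\rangle|\le\|G\|_X\|f\|$.
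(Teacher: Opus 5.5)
Your proposal is correct and follows essentially the same route as the paper. Both arguments use the isometric quotient duality $(X/N)'\cong N^\perp$ and the identity $H_0^{1/2,1/4}(\Sigma)=H^{1/2,1/4}(\Sigma)$, which holds because $1/4<1/2$. Both then characterize $N^\perp$ as the supported space by approximating each $F\in N$: in flattened collar coordinates near $\partial W_0$, translate $F$ a positive distance into the complement of $\overline W$, then mollify (with time cutoffs, as your first point already provides) to land in $C_c^\infty(\Sigma\setminus\overline W)$.
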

\begin{proof}
Put $E=H^{1/2,1/4}(\Sigma)$ and let $R:E\to E|_W$ be restriction.
Since $1/4<1/2$, time cutoffs show $H_0^{1/2,1/4}(\Sigma)=E$.
Quotient duality identifies $(E/\ker R)'$ with the annihilator
$(\ker R)^\perp\subset E'$, without invoking zero extension of $g$.
An annihilator vanishes on every test function supported outside
$\overline W$, so its distributional support lies in $\overline W$.
Conversely, a function $G\in\ker R$ can be approximated in $E$ by
smooth functions compactly supported in $\Sigma\setminus\overline W$.
Here is the relevant density at the spatial endpoint $1/2$: flatten
$\partial W_0$, localize, and translate the zero-side function a positive
distance into the complement before mollifying. Translation is strongly
continuous both in $L^2_tH^{1/2}_y$ and in $H^{1/4}_tL^2_y$; smooth
coordinate changes and a finite partition of unity preserve these norms.
Time cutoffs and mollification finish the approximation. Every supported
$f\in E'$ consequently annihilates $\ker R$. The transpose of the
quotient map is isometric, which proves both the identification and the
pairing bound. No bounded zero-extension map on the positive restriction
space at spatial order $1/2$ is asserted.
\end{proof}

We define the space of admissible lateral Dirichlet inputs by
\[
 \D_\U=\widetilde H^{-1/2,-1/4}(\U)
 :=\big\{f\in H^{-1/2,-1/4}(\Sigma):
              \supp f\subset\overline\U\big\}.
\]
Here the support and the closure of \(\U\) are taken relative to
\(\Sigma\).  We give \(\D_\U\) the norm inherited from
\(H^{-1/2,-1/4}(\Sigma)\).  Since \(\D_\U\) is a closed subspace, it is a
Hilbert space.  The full input space is
\[
 \X_\U=H^{-1}(M)\times\D_\U.
\]
We equip it with the product norm
\[
 \|(u_0,f)\|_{\X_\U}^2
 =\|u_0\|_{H^{-1}(M)}^2
  +\|f\|_{H^{-1/2,-1/4}(\Sigma)}^2.
\]
For $(u_0,f)\in\X_\U$, the transposition solution of
\eqref{eq:ibvp} belongs to $L^2(Q)$ and has generalized terminal and Neumann
traces.  The partial input--output map is
\begin{equation}\label{eq:io-map}
 \Lambda^{\U,\V}_{c,g,q}:\X_\U
 \longrightarrow H^{-1}(M)\times H^{-3/2,-3/4}(\V),
 \qquad
 (u_0,f)\longmapsto\big(u(T),\partial_\nu u|_\V\big).
\end{equation}
If $u_j$ is the solution corresponding to $q_j$ with the same input and
$w=u_1-u_2$, then $w$ has zero initial and lateral Dirichlet traces and
satisfies
\[
 P_{q_1}w=(q_2-q_1)u_2\in L^2(Q).
\]
By maximal $L^2$ parabolic regularity and the endpoint and normal trace
theorems,
\[
 w\in H^{2,1}(Q),\qquad
 w(T)\in H_0^1(M),\qquad
 \partial_\nu w\in H^{1/2,1/4}(\Sigma).
\]
Thus the difference of the maps has the improved range
\begin{equation}\label{eq:improved-range}
 \Y_\V=H_0^1(M)\times H^{1/2,1/4}(\V),
 \qquad
 \Lambda^{\U,\V}_{c,g,q_1}-\Lambda^{\U,\V}_{c,g,q_2}
 \in\mathcal B(\X_\U,\Y_\V).
\end{equation}
The output space is equipped with
\[
 \|(v,G)\|_{\Y_\V}^2
 =\|v\|_{H_0^1(M)}^2+\|G\|_{H^{1/2,1/4}(\V)}^2.
\]
We use the following operator norm for the measurement discrepancy:
\begin{equation}\label{eq:measurement-norm}
 \begin{split}
 &\|\Lambda^{\U,\V}_{c,g,q_1}
       -\Lambda^{\U,\V}_{c,g,q_2}\|_*\\
 &\quad:=
 \sup_{\substack{(u_0,f)\in\X_\U\\(u_0,f)\neq(0,0)}}
 \frac{\|\big(\Lambda^{\U,\V}_{c,g,q_1}
                 -\Lambda^{\U,\V}_{c,g,q_2}\big)(u_0,f)\|_{\Y_\V}}
      {\|(u_0,f)\|_{\X_\U}}.
 \end{split}
\end{equation}
Equivalently, if \(u_j\) is the solution for \(q_j\) with the common input
\((u_0,f)\) and \(w=u_1-u_2\), then the numerator in
\eqref{eq:measurement-norm} is
\[
 \left(\|w(T)\|_{H_0^1(M)}^2
       +\|\partial_\nu w|_\V\|_{H^{1/2,1/4}(\V)}^2\right)^{1/2}.
\]
Thus \(\|\cdot\|_*\) is the norm of the \emph{difference} map in the improved
space \(\mathcal B(\X_\U,\Y_\V)\), not the norm of either individual map in
its natural weaker range in \eqref{eq:io-map}. This gain follows from
cancellation of the common initial and Dirichlet data: the difference solves
a problem with homogeneous data and an $L^2$ source. The theorem measures
errors in this stronger operator norm. It does not identify that norm
with the weaker norm of arbitrary perturbations of a single forward map.

For later reference, we use the anisotropic Sobolev space
\[
 H^{2,1}(Q)=L^2(0,T;H^2(M))\cap H^1(0,T;L^2(M))
\]
with its natural intersection norm.  All duality pairings below are complex
bilinear when the formal transpose is used; this convention agrees with the
integral identity in Section~\ref{sec:ray}.

\begin{lemma}[Lifting of compatible boundary and temporal traces]\label{lem:joint-lifting}
Given $g\in H_0^{3/2,3/4}(\Sigma)$,
$\eta\in H^{1/2,1/4}(\Sigma)$ and $\psi_0,\psi_T\in H_0^1(M)$,
there is a bounded linear lifting $Z\in H^{2,1}(Q)$ satisfying
\[
 Z|_\Sigma=g,\qquad \partial_\nu Z|_\Sigma=\eta,\qquad
 Z(0)=\psi_0,\qquad Z(T)=\psi_T.
\]
Here the continuous endpoint maps are
\[
 r_\kappa:H^{3/2,3/4}(\Sigma)\longrightarrow H^{1/2}(\partial M),
 \qquad \kappa\in\{0,T\},
\]
and $H_0^{3/2,3/4}(\Sigma)=\ker r_0\cap\ker r_T$, with the
inherited intersection norm. In particular $g(0)=g(T)=0$ means zero
in $H^{1/2}(\partial M)$, and agrees with the traces of
$\psi_0,\psi_T\in H_0^1(M)$. Thus the compatibility conditions at the space-time corners hold;
the normal data, whose time exponent is $1/4$, have no endpoint trace.
\end{lemma}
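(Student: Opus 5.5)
I will prove the lifting lemma by splitting it into a boundary part and a temporal part, then correcting one by the other. The compatibility condition is $g(0)=\psi_0|_{\partial M}$ and $g(T)=\psi_T|_{\partial M}$, both zero here.

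\textbf{Step 1: temporal traces.} First lift the temporal data. Let $S(t)$ be the Dirichlet heat semigroup on $M$; its domain is $H^2\cap H_0^1$. For $\psi\in H_0^1(M)=[L^2,D(-\Delta_D)]_{1/2}$, the function $t\mapsto S(t)\psi$ lies in $H^{2,1}((0,\infty)\times M)$, with norm bounded by $\|\psi\|_{H^1}$. This is the standard characterization of the trace space of maximal regularity. Set
\[
 Z_1=\theta(t)S(t)\psi_0+\theta(T-t)S(T-t)\psi_T,
\]
where $\theta\in C^\infty$, $\theta=1$ near $0$ and $\theta=0$ on $[T/2,\infty)$. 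Then $Z_1(0)=\psi_0$, $Z_1(T)=\psi_T$, and $Z_1|_\Sigma=0$. Its normal derivative satisfies $\partial_\nu Z_1=:\eta_1\in H^{1/2,1/4}(\Sigma)$, by the normal trace theorem quoted earlier for $H^{2,1}(Q)$.

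\textbf{Step 2: boundary traces with zero temporal data.} Next, construct $Z_2\in H^{2,1}(Q)$ with $Z_2|_\Sigma=g$, $\partial_\nu Z_2=\eta-\eta_1$ and $Z_2(0)=Z_2(T)=0$. Localize by a partition of unity and boundary normal coordinates $(y,\rho)$, $\rho\geq0$. Extend $g$ and $\tilde\eta=\eta-\eta_1$ in time by zero outside $(0,T)$:
\begin{itemize}[label={}]
\item for $g$ this is bounded in $H^{3/2,3/4}(\mathbb R\times\partial M)$ because $g\in H_0^{3/2,3/4}$ with vanishing endpoint traces, and $3/4-1/2\neq$ integer issues do not arise;
\item for $\tilde\eta$ it is bounded in $H^{1/2,1/4}$ because $1/4<1/2$.
\end{itemize}
Then apply the standard anisotropic Lions--Magenes lifting on the half-space,
\[
 Z_2=\Phi(\rho\langle D\rangle_{\rm par})\,g+\rho\,\Phi(\rho\langle D\rangle_{\rm par})\,\tilde\eta,
\]
where $\langle D\rangle_{\rm par}=(1+|\xi|^2+|\tau|)^{1/2}$ is the parabolic symbol and $\Phi\in C_c^\infty$ with $\Phi(0)=1$. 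A Fourier computation shows that $H^{3/2,3/4}\times H^{1/2,1/4}$ maps boundedly into $H^{2,1}(\mathbb R\times\mathbb R^{n}_+)$.

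\textbf{The main obstacle} is Step 2: the temporal values $Z_2(0)$ and $Z_2(T)$ must vanish, but this lifting does not automatically preserve the zero temporal traces. My fix is to subtract a second temporal correction. The traces $Z_2(0),Z_2(T)$ lie in $H^1(M)$ with boundary trace $g(0)=g(T)=0$, hence in $H_0^1(M)$. Apply Step 1 to the data $-Z_2(0)$ and $-Z_2(T)$ to obtain $Z_3$, and replace $Z_2$ by $Z_2+Z_3$. This correction alters $\partial_\nu$ by some $\eta_3\in H^{1/2,1/4}$, so feed that back: re-solve Step 2 for the normal datum $-\eta_3$ with $g=0$. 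This looks circular. To break the loop, I would choose the Step 2 lifting with $\Phi$ supported so that $Z_2$ has temporal traces controlled only by $g$'s endpoint traces, which vanish. Concretely, I would use an odd/even reflection in $t$ about $0$ and $T$ combined with a causal (Laplace-side) multiplier, so that zero extension preserves support in $[0,T]$. I expect verifying this support property to be the delicate point. The linear map $Z=Z_1+Z_2$ with the bounds above then gives the lemma.
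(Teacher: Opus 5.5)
There is a genuine gap, and it is exactly the step you flag as delicate: Step 2 never produces a boundary lifting with $Z_2(0)=Z_2(T)=0$. With zero extension in time and the even symbol $(1+|\xi|^2+|\tau|)^{1/2}$, the lifted function is nonlocal in $t$, so its temporal traces are generically nonzero. Your first repair is circular, as you note: correcting the temporal traces with the Dirichlet semigroup changes $\partial_\nu$, and the normal correction again needs a lifting with zero temporal traces. Your second repair fails as stated. A causal (Laplace-side) multiplier, e.g. one built from $(1+|\xi|^2+i\tau)^{1/2}$, preserves support in $[0,\infty)$, not in $[0,T]$. It therefore kills the trace at $t=0$ but in general leaves $Z_2(T)\neq0$. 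Mixing it with zero extension and reflections does not give a coherent mechanism.

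The missing idea, which is the paper's route, is to use odd reflection about both endpoints instead of zero extension, together with a multiplier that is even in the time frequency. Concretely:
\begin{itemize}
\item Expand in $e_j(y)\sin(\omega_k t)$, where $-\Delta_{\partial M}e_j=\lambda_je_j$ and $\omega_k=k\pi/T$.
\item Set $A_{jk}=(1+\lambda_j+\omega_k)^{1/2}$.
\item Define $L_Dg=\theta(\rho)(1+\rho A)e^{-\rho A}g$ and $L_N\eta=-\theta(\rho)\rho e^{-\rho A}\eta$.
\end{itemize}
These operators map sine series to sine series. Finite sums vanish at $t=0$ and $t=T$, and convergence in $H^{2,1}$ passes this to the limit, so no correction loop is needed.

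The hypotheses enter only through two norm equivalences. The equivalence $\|A^{3/2}g\|_{L^2}\asymp\|g\|_{H^{3/2,3/4}}$ uses $g(0)=g(T)=0$, because $1/2<3/4<3/2$. The equivalence $\|A^{1/2}\eta\|_{L^2}\asymp\|\eta\|_{H^{1/2,1/4}}$ needs no endpoint condition, because $1/4<1/2$. Your Step 1 is the paper's $E\psi$, and $L_Dg+E\psi+L_N(\eta-\eta_E)$ is then the required lifting.

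An alternative repair also works but has to be carried out explicitly. Split the data with a temporal partition of unity, use a causal lifting for the part near $t=0$ and an anticausal lifting (symbol $(1+|\xi|^2-i\tau)^{1/2}$) for the part near $t=T$. Then multiply each piece by a time cutoff that equals one on the support of its data and vanishes near the opposite endpoint.

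Two smaller corrections:
\begin{itemize}
\item The $g$-term needs $\Phi'(0)=0$ (for example $\Phi\equiv1$ near $0$) so that it contributes no normal derivative.
\item Since $\partial_\nu=-\partial_\rho$, the $\tilde\eta$-term needs a minus sign.
\end{itemize}
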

\begin{proof}
Trace continuity is the parabolic Hilbert trace theory of
\cite[Chapter~4]{LionsMagenes}. More explicitly, the anisotropic
trace theorem \cite[Theorem~4.6.2]{Amann2009}, applied with anisotropy
$(2,1,\ldots,1)$, $p=2$, $s=3/2$, and trace order $j=0$,
gives the endpoint trace space of order $s-2/p=1/2$; its target
$B^{1/2}_{2,2}$ equals $H^{1/2}$. The intersection-space realization
is \cite[Theorem~3.7.3]{Amann2009}. Localization in boundary charts
and time reflection give the result on the finite cylinder.
We give the right inverses used here explicitly. On the closed manifold $\partial M$ let
$-\Delta_{\partial M}e_j=\lambda_j e_j$, and put $\omega_k=k\pi/T$.
On the basis $e_j(y)\sin(\omega_k t)$ define the positive multiplier
$A$ by $A_{jk}=(1+\lambda_j+\omega_k)^{1/2}$.
This multiplier is intrinsic: it is the joint spectral calculus of
$1-\Delta_{\partial M}$ and $(-\partial_{t,D}^2)^{1/2}$, which act
on separate variables. Changing orthonormal bases within eigenspaces
does not change $A$. The inward normal collar is defined by the fixed
metric; changing boundary charts merely rewrites the same construction.
The associated spectral norms are equivalent to the chart-defined
Sobolev norms on the compact smooth boundary.
Odd reflection in time gives the norm equivalences
\[
 \|A^{1/2}\eta\|_{L^2}\asymp\|\eta\|_{H^{1/2,1/4}},\qquad
 \|A^{3/2}g\|_{L^2}\asymp\|g\|_{H^{3/2,3/4}}.
\]
The first needs no endpoint condition since $1/4<1/2$; the second
uses the zero endpoint traces since $1/2<3/4<3/2$.
In an inward normal collar of width $\rho_0$ take a cutoff $\theta$
equal to one near $\rho=0$ and zero near $\rho_0$. Set
\[
 L_N\eta=-\theta(\rho)\rho e^{-\rho A}\eta,\qquad
 L_Dg=\theta(\rho)(1+\rho A)e^{-\rho A}g.
\]
Since $\partial_\nu=-\partial_\rho$, these have lateral traces
$(0,\eta)$ and $(g,0)$, respectively. Both temporal traces vanish.
Indeed finite sine sums have this property, and convergence in
$H^{2,1}$ implies convergence of the temporal traces in $H^1(M)$.
For each multiplier value $a\geq1$, integration in $\rho$ gives
\[
 \int_0^\infty a^4\rho^2e^{-2a\rho}\,d\rho\leq Ca,
 \qquad
 \int_0^\infty a^4(1+a\rho)^2e^{-2a\rho}\,d\rho\leq Ca^3.
\]
The same bounds hold for two normal derivatives and for one time
derivative, since $\omega_k\leq a^2$. The same estimates hold for
all tangential and mixed spatial derivatives of total order at most two:
each tangential derivative is controlled by one power of $a$ in the
boundary elliptic norm, and each normal derivative contributes at most
one further power. Parseval and elliptic norm equivalence on $\partial M$
therefore prove the asserted boundedness.
Derivatives of $\theta$ and the smooth collar metric add only lower
order terms with the same bounds. Extension by zero past the collar
is smooth at its inner edge.

For the temporal data use $A_D=1-\Delta_D$ and disjoint smooth
cutoffs $\theta_0,\theta_T$ equal to one near the respective endpoints:
\[
 E\psi=\theta_0(t)e^{-tA_D}\psi_0+
 \theta_T(t)e^{-(T-t)A_D}\psi_T.
\]
The spectral identity $D(A_D^{1/2})=H_0^1(M)$ and
$\int_0^\infty a^2e^{-2ta}\,dt=a/2$ show that
$E\psi\in H^{2,1}(Q)$ with the required temporal traces and zero
Dirichlet trace. Its normal trace $\eta_E$ is in $H^{1/2,1/4}$.
Thus $L_Dg+E\psi+L_N(\eta-\eta_E)$ is the desired lifting.
For completeness, continuity of $r_0,r_T$ shows that the closure of
$C_c^\infty(\Sigma)$ is contained in their common kernel. Conversely,
zero endpoint traces allow zero extension in
$H^{3/4}(\mathbb R;L^2(\partial M))$, by the fractional Hardy
inequality at each endpoint; zero extension also preserves the
$L^2(\mathbb R;H^{3/2}(\partial M))$ norm. Dilate this extension
slightly about $T/2$ to place its support strictly inside $(0,T)$.
Strong continuity of dilation in both norms, followed by time
mollification and boundary spectral smoothing, gives approximation by
$C_c^\infty(\Sigma)$ in the intersection norm. This proves the stated
kernel characterization. No critical temporal exponent occurs here:
$3/4$ is strictly between $1/2$ and $3/2$, whereas the normal data
have temporal order $1/4<1/2$.
\end{proof}

With the spaces defined at the beginning of this section, the liftings
above will be used in the following concrete form. If
\(\eta\in H^{1/2,1/4}(\Sigma)\), then there is
\(Z_\eta\in H^{2,1}(Q)\) satisfying
\begin{equation}\label{eq:normal-extension}
 Z_\eta|_\Sigma=0,\qquad
 \partial_\nu Z_\eta|_\Sigma=\eta,\qquad
 Z_\eta(0)=Z_\eta(T)=0,
 \qquad
 \|Z_\eta\|_{H^{2,1}(Q)}\leq C\|\eta\|_{H^{1/2,1/4}(\Sigma)}.
\end{equation}
If \(\zeta\in H_0^{3/2,3/4}(\Sigma)\), there is an analogous
\(Z_\zeta\) with
\begin{equation}\label{eq:dirichlet-extension}
 Z_\zeta|_\Sigma=\zeta,\qquad
 \partial_\nu Z_\zeta|_\Sigma=0,\qquad
 Z_\zeta(0)=Z_\zeta(T)=0,
 \qquad
 \|Z_\zeta\|_{H^{2,1}(Q)}\leq C\|\zeta\|_{H^{3/2,3/4}(\Sigma)}.
\end{equation}
We also use the corresponding extension of compatible temporal traces.  These
right inverses may be constructed in boundary normal coordinates and patched
by a finite partition of unity.  In particular, their norms depend only on
the fixed geometry and \(T\); see, for example, the parabolic trace framework
in \cite[Chapter~4]{LionsMagenes}; the required right inverses were proved above.

We first work with the product metric $\widetilde g=\dd x_1^2\oplus g_0$.
Set
\[
 P_q=\partial_t-\Delta_{\widetilde g}+q,
 \qquad P_q^t=-\partial_t-\Delta_{\widetilde g}+q,
\]
where the superscript $t$ denotes the formal transpose for the complex
bilinear Green identity.  Define the maximal parabolic graph spaces
\[
 \Hc_+(Q)=\{u\in L^2(Q):(\partial_t-\Delta_{\widetilde g})u\in L^2(Q)\},
 \quad
 \Hc_-(Q)=\{u\in L^2(Q):(-\partial_t-\Delta_{\widetilde g})u\in L^2(Q)\}.
\]
The lateral Dirichlet and Neumann traces and both temporal traces extend as
bounded maps
\begin{align}\label{eq:graph-traces}
 &\tau_0:\Hc_\pm(Q)\to H^{-1/2,-1/4}(\Sigma),
 &&\tau_1:\Hc_\pm(Q)\to H^{-3/2,-3/4}(\Sigma),\\
 &r_0,r_T:\Hc_\pm(Q)\to H^{-1}(M).\notag
\end{align}

\subsection{Generalized traces and transposition solutions}

We include the proof because the stronger range used for the difference of
two measurements depends on the precise trace spaces.  For smooth functions
\(u,Z\), Green's formula reads
\begin{align}\label{eq:general-green}
 &((\partial_t-\Delta_{\widetilde g})u,Z)_{L^2(Q)}
 -(u,(-\partial_t-\Delta_{\widetilde g})Z)_{L^2(Q)}\notag\\
 &\quad=\langle\tau_0u,\partial_\nu Z\rangle_\Sigma
 -\langle\tau_1u,Z|_\Sigma\rangle_\Sigma
 +\langle r_Tu,Z(T)\rangle_M-\langle r_0u,Z(0)\rangle_M.
\end{align}
Here the first lateral pairing is between
\(H^{-1/2,-1/4}\) and \(H^{1/2,1/4}\), the second between
\(H^{-3/2,-3/4}\) and \(H_0^{3/2,3/4}\), and the temporal
pairings are between \(H^{-1}(M)\) and \(H_0^1(M)\).

\begin{lemma}[Density in the maximal graph spaces]\label{lem:graph-density}
$C^\infty(\overline Q)$ is dense in $\Hc_+(Q)$ and $\Hc_-(Q)$
for the graph norm. No boundary condition is imposed on the approximants.
\end{lemma}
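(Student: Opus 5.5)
We prove density of $C^\infty(\overline Q)$ in $\Hc_+(Q)$ by the Hilbert-space duality argument of Lions--Magenes; the case of $\Hc_-(Q)$ follows by the time reversal $t\mapsto T-t$, which exchanges the two operators. Since $\Hc_+(Q)$ is a Hilbert space with graph inner product
\[
 (u,v)_{\Hc_+}=(u,v)_{L^2(Q)}+(Au,Av)_{L^2(Q)},\qquad A=\partial_t-\Delta_{\widetilde g},
\]
it suffices to show the following. If $\ell\in\Hc_+(Q)'$ vanishes on $C^\infty(\overline Q)$, then $\ell=0$. By Riesz, write $\ell(u)=(u,\phi)_{L^2}+(Au,\psi)_{L^2}$ with $\phi,\psi\in L^2(Q)$ (real pairings; complex conjugation is harmless).

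\textbf{Step 1: the annihilator identity.} First test with $u\in C_c^\infty(Q)$. This gives, in the sense of distributions on $Q$,
\[
 \phi+A^t\psi=0,\qquad A^t=-\partial_t-\Delta_{\widetilde g}.
\]
In particular $A^t\psi=-\phi\in L^2(Q)$. Extend $\psi,\phi$ by zero to $\widetilde\psi,\widetilde\phi$ on $\mathbb R\times\widetilde M$, where $\widetilde M$ is a slightly larger manifold, or $\mathbb R\times(\mathbb R\times M_0^{\rm int})$, carrying the product metric. Testing $\ell$ with $u\in C^\infty(\overline Q)$ arising as restrictions of arbitrary $U\in C_c^\infty(\mathbb R\times\widetilde M)$ gives
\[
 \int (U\widetilde\phi+AU\,\widetilde\psi)=0.
\]
Hence $A^t\widetilde\psi=-\widetilde\phi\in L^2$ on the whole space. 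This is the key point: the full-space equation encodes vanishing of all boundary and temporal traces of $\psi$, without our ever having to define those traces.

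\textbf{Step 2: regularity of the extension.} The operator $A^t$ is a backward heat operator with smooth coefficients. Maximal $L^2$ regularity, localized on compact sets, gives $\widetilde\psi\in H^{2,1}_{\rm loc}$. Since $\widetilde\psi$ vanishes outside $\overline Q$, we obtain $\psi\in H^{2,1}(Q)$ together with
\[
 \psi|_\Sigma=0,\qquad \partial_\nu\psi|_\Sigma=0,\qquad \psi(0)=\psi(T)=0.
\]
We use that an $H^{2,1}$ function vanishing on one side has zero traces of order $0$ and $1$ across the lateral face, and zero temporal trace across $t=0,T$. The main obstacle is exactly this step. Local parabolic regularity requires care near the corners $\{0,T\}\times\partial M$. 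However, the equation holds on an open set containing $\overline Q$ with $L^2$ right side, so interior (in the extended space) parabolic regularity applies uniformly and the corners pose no issue. One must check that the extension can be chosen smooth, which is possible since $M\Subset\mathbb R\times M_0^{\rm int}$.

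\textbf{Step 3: conclusion.} Since $\psi\in H^{2,1}(Q)$ with all four traces vanishing, $\psi$ is the $H^{2,1}$-limit of functions $\psi_k\in C_c^\infty(Q)$. This follows by the same collar translation and mollification used in Lemma~\ref{lem:joint-lifting}, or by the standard characterization of $H^{2,1}_0$. For $u\in\Hc_+(Q)$ we then compute
\[
 (Au,\psi)=\lim_k(Au,\psi_k)=\lim_k(u,A^t\psi_k)=(u,A^t\psi)=-(u,\phi).
\]
The middle equality is the distributional definition, and the limits use $u,Au\in L^2$ and $\psi_k\to\psi$ in $H^{2,1}$. Therefore $\ell(u)=0$ for every $u\in\Hc_+(Q)$, which proves density.
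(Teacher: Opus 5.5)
Your proposal is correct and follows essentially the same route as the paper: you represent an annihilating functional by $(\phi,\psi)\in L^2(Q)^2$, test against restrictions of globally smooth functions to get $A^t\widetilde\psi=-\widetilde\phi$ for the zero extension, deduce $H^{2,1}$ regularity with all four traces vanishing, approximate by $C_c^\infty(Q)$, and pass to the limit in the pairing. The only difference is cosmetic: you use local interior parabolic regularity on a neighbourhood of $\overline Q$, whereas the paper gets global $H^{2,1}$ regularity on a closed extension from an eigenfunction expansion and a Fourier transform in time, which avoids any bootstrap.
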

\begin{proof}
It suffices to prove the forward case. Suppose that a continuous graph
functional vanishes on all smooth functions. By Hahn--Banach it has
the form $\mathcal L(u)=(a,u)+(b,P_0u)$ with $a,b\in L^2(Q)$.
Embed $M$ in a closed smooth manifold with an extended metric, and
extend $a,b$ by zero in space and time. Testing against restrictions
of globally smooth functions gives
$P_0^t b^0=-a^0$ on the full product in distributions.
Global $L^2$ parabolic regularity gives
$b^0\in H^1(\mathbb R;L^2)\cap L^2(\mathbb R;H^2)$.
For clarity this regularity follows by expansion in eigenfunctions of
the extended Laplacian and Fourier transformation in time: the symbol
$-i\tau+\lambda_k$ controls both $|\tau|$ and $\lambda_k$, with the
zero eigenvalue controlled by $\|b^0\|_{L^2}$.
Since $b^0$ is supported in $[0,T]\times M$, its two temporal traces,
lateral Dirichlet trace, and lateral normal trace vanish. The
zero-trace density theorem for $H^{2,1}$ (equivalently, translation
into the cylinder followed by smoothing in flattened charts) supplies
$b_j\in C_c^\infty(Q)$ with $b_j\to b$ in $H^{2,1}(Q)$.
For any $u\in\Hc_+(Q)$, the distributional equation defining $P_0u$
therefore yields
\[
 (b,P_0u)=\lim_j(b_j,P_0u)
 =\lim_j(P_0^tb_j,u)=(P_0^tb,u)=-(a,u).
\]
Hence every such annihilating functional is zero on the entire graph
space, which proves density. Time reversal proves the other case.
This argument uses no generalized trace of $u$ in establishing density.
\end{proof}

\begin{proposition}[Generalized parabolic traces]\label{prop:graph-traces}
The four classical traces extend uniquely to the bounded maps in
\eqref{eq:graph-traces}.  More precisely,
\begin{align}\label{eq:graph-trace-bound}
 &\|\tau_0u\|_{H^{-1/2,-1/4}(\Sigma)}
 +\|\tau_1u\|_{H^{-3/2,-3/4}(\Sigma)}\notag\\
 &\qquad+\|r_0u\|_{H^{-1}(M)}+\|r_Tu\|_{H^{-1}(M)}
 \leq C\|u\|_{\Hc_\pm(Q)}.
\end{align}
Identity \eqref{eq:general-green} remains valid whenever its pairings are
defined by the displayed spaces.
\end{proposition}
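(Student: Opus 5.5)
The plan is a duality argument. For $u\in\Hc_+(Q)$ we define each trace as a functional through the Green identity \eqref{eq:general-green}, using test functions $Z\in H^{2,1}(Q)$ supplied by Lemma~\ref{lem:joint-lifting}. We then show that these functionals agree with the classical traces on $C^\infty(\overline Q)$. Lemma~\ref{lem:graph-density} gives uniqueness of the extensions.

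Fix $u\in C^\infty(\overline Q)$ and set $B(u,Z)=(P_0u,Z)_{L^2(Q)}-(u,P_0^tZ)_{L^2(Q)}$, where $P_0=\partial_t-\Delta_{\widetilde g}$. For $Z\in H^{2,1}(Q)$,
\[
 |B(u,Z)|\le \|P_0u\|_{L^2}\|Z\|_{L^2}+\|u\|_{L^2}\|P_0^tZ\|_{L^2}\le C\|u\|_{\Hc_+(Q)}\|Z\|_{H^{2,1}(Q)}.
\]
Each trace is isolated by choosing the lifting so that only one boundary term survives.

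\begin{itemize}[label=$\bullet$]
\item \emph{Dirichlet trace.} Given $\eta\in H^{1/2,1/4}(\Sigma)$, take $Z=Z_\eta$ from \eqref{eq:normal-extension}. Then \eqref{eq:general-green} reduces to $\langle\tau_0u,\eta\rangle_\Sigma=B(u,Z_\eta)$, which is bounded by $C\|u\|_{\Hc_+}\|\eta\|_{H^{1/2,1/4}}$. By the earlier observation $H_0^{1/2,1/4}=H^{1/2,1/4}$, this bounds $\tau_0u$ in $H^{-1/2,-1/4}(\Sigma)$.
\item \emph{Neumann trace.} Given $\zeta\in H_0^{3/2,3/4}(\Sigma)$, use $Z_\zeta$ from \eqref{eq:dirichlet-extension}. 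This gives $-\langle\tau_1u,\zeta\rangle=B(u,Z_\zeta)$, hence $\tau_1u\in H^{-3/2,-3/4}(\Sigma)$.
\item \emph{Temporal traces.} Given $\psi_T\in H_0^1(M)$, use the lifting $Z$ of Lemma~\ref{lem:joint-lifting} with $g=0$, $\eta=0$, $\psi_0=0$. This gives $\langle r_Tu,\psi_T\rangle_M=B(u,Z)$, so $\|r_Tu\|_{H^{-1}}\le C\|u\|_{\Hc_+}$. The trace $r_0$ is handled in the same way.
\end{itemize}

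These bounds prove \eqref{eq:graph-trace-bound} on the dense subspace $C^\infty(\overline Q)$. Lemma~\ref{lem:graph-density} then extends the four maps uniquely and continuously to $\Hc_+(Q)$. For $\Hc_-(Q)$ we use time reversal $t\mapsto T-t$, which exchanges $\Hc_+$ and $\Hc_-$ and swaps $r_0$ with $r_T$.

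To extend the Green identity, fix $Z\in H^{2,1}(Q)$ with $Z|_\Sigma\in H_0^{3/2,3/4}$ and $Z(0),Z(T)\in H_0^1$. Approximate $u\in\Hc_+$ by smooth $u_j$ in the graph norm. The left-hand side of \eqref{eq:general-green} converges since $Z,P_0^tZ\in L^2$. Each pairing on the right converges by \eqref{eq:graph-trace-bound} together with the trace theorems placing $\partial_\nu Z$, $Z|_\Sigma$, $Z(0)$, $Z(T)$ in the displayed dual spaces.

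The main obstacle is the classical Green identity for smooth $u$ against a general $Z\in H^{2,1}(Q)$. This is needed so that the functionals defined by duality coincide with the actual traces. I would obtain it by approximating $Z$ in $H^{2,1}(Q)$ by smooth functions, which are dense by standard localization and mollification, and then using continuity of the $H^{2,1}$ traces from Lemma~\ref{lem:joint-lifting}'s trace theory. A second subtlety is the corner compatibility: the temporal traces of $Z_\eta$ and $Z_\zeta$ must vanish exactly, so that no spurious endpoint terms appear. This is guaranteed by the liftings constructed in Lemma~\ref{lem:joint-lifting}.
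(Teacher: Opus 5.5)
Your proposal is correct and follows the paper's proof. Each trace is defined by duality through the Green identity tested against $Z_\eta$, $Z_\zeta$ and an endpoint lifting, the bounds are proved on $C^\infty(\overline Q)$, and Lemma~\ref{lem:graph-density} gives the unique extension and the general identity. The only differences are minor: for $r_T$ you use the full joint lifting with $\partial_\nu Z=0$, whereas the paper leaves $\partial_\nu Z$ free and absorbs the surviving $\langle\tau_0u,\partial_\nu Z\rangle_\Sigma$ term using the already-proved Dirichlet bound, and you make explicit the Green identity for smooth $u$ against $Z\in H^{2,1}(Q)$, which the paper uses tacitly.
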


\begin{proof}
We treat \(\Hc_+(Q)\); time reversal gives the other case.  Let first
\(u\in C^\infty(\overline Q)\).  Insert the extension
\(Z_\eta\) from \eqref{eq:normal-extension} into
\eqref{eq:general-green}.  All boundary terms except the one containing
\(\tau_0u\) vanish, and therefore
\[
 |\langle\tau_0u,\eta\rangle_\Sigma|
 \leq C\big(\|u\|_{L^2(Q)}
       +\|(\partial_t-\Delta_{\widetilde g})u\|_{L^2(Q)}\big)
       \|\eta\|_{H^{1/2,1/4}(\Sigma)}.
\]
This defines \(\tau_0u\) in \(H^{-1/2,-1/4}(\Sigma)\).  Using
\(Z_\zeta\) from \eqref{eq:dirichlet-extension} in the same way gives
\[
 |\langle\tau_1u,\zeta\rangle_\Sigma|
 \leq C\|u\|_{\Hc_+(Q)}
       \|\zeta\|_{H^{3/2,3/4}(\Sigma)}.
\]

To obtain the endpoint traces, fix \(\psi\in H_0^1(M)\) and choose
\(Z\in H^{2,1}(Q)\) such that
\[
 Z|_\Sigma=0,\qquad Z(0)=0,\qquad Z(T)=\psi,\qquad
 \|Z\|_{H^{2,1}(Q)}\leq C\|\psi\|_{H_0^1(M)}.
\]
The already constructed \(\tau_0u\) and \eqref{eq:general-green} give
\begin{align*}
 |\langle r_Tu,\psi\rangle_M|
 &\leq C\big(\|u\|_{L^2(Q)}
 +\|(\partial_t-\Delta_{\widetilde g})u\|_{L^2(Q)}
 +\|\tau_0u\|_{H^{-1/2,-1/4}}\big)
 \|\psi\|_{H_0^1(M)}.
\end{align*}
The initial trace is identical.  Finally, Lemma~\ref{lem:graph-density} gives density of
$C^\infty(\overline Q)$ in the maximal parabolic graph spaces.
Passing to the limit proves the extensions, their uniqueness, and
\eqref{eq:graph-trace-bound}; the same density argument extends
\eqref{eq:general-green}.
\end{proof}

The trace-extension theorem also gives the density statement used later in
the Carleman solvability argument.

\begin{lemma}[Density of normal traces]\label{lem:normal-density}
Let \(W\subset\Sigma\) be relatively open and put
\[
 \mathcal T=\{z\in C^\infty(\overline Q):
 z|_\Sigma=0,\ z(0)=z(T)=0\}.
\]
Then \(\{\partial_\nu z|_W:z\in\mathcal T\}\) is dense in
\(L^2(W)\).  The traces on two disjoint relatively open portions of
\(\Sigma\) can be prescribed independently up to density.
\end{lemma}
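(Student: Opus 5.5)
To prove Lemma~\ref{lem:normal-density} we argue by duality in the Hilbert space \(L^2(W)\). It is enough to show that the annihilator of \(\{\partial_\nu z|_W:z\in\mathcal T\}\) is trivial. So let \(\eta\in L^2(W)\) with \(\int_W\partial_\nu z\,\eta\,dS\,dt=0\) for every \(z\in\mathcal T\), and extend \(\eta\) by zero to \(\Sigma\); the extension lies in \(L^2(\Sigma)\). The aim is to show \(\eta=0\), and the argument will produce such an \(\eta\) explicitly as a limit of traces.

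\textbf{Step 1: the trace set contains \(C_c^\infty\) functions.} Take \(\phi\in C_c^\infty(\Sigma)\). By the lifting Lemma~\ref{lem:joint-lifting}, there is \(Z\in H^{2,1}(Q)\) with \(Z|_\Sigma=0\), \(\partial_\nu Z=\phi\) and \(Z(0)=Z(T)=0\). Since \(\phi\) is smooth, we can do better with an explicit construction in a boundary normal collar. Set
\[
 z=-\theta(\rho)\,\rho\,\phi(t,y),
\]
where \(\rho\) is the normal coordinate and \(\theta\) is the collar cutoff from the proof of Lemma~\ref{lem:joint-lifting}. Then \(z\in C^\infty(\overline Q)\), \(z|_\Sigma=0\), and \(\partial_\nu z=-\partial_\rho z=\phi\) on \(\Sigma\). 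Because \(\phi\) is compactly supported in \((0,T)\), we also get \(z(0)=z(T)=0\). Hence \(z\in\mathcal T\), and every \(\phi\in C_c^\infty(\Sigma)\) is a normal trace of an element of \(\mathcal T\).

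\textbf{Step 2: density on \(W\).} Restricting Step~1 to \(\phi\in C_c^\infty(W)\), where \(W\) is relatively open in \(\Sigma\), the trace set contains \(C_c^\infty(W)\). Since \(C_c^\infty(W)\) is dense in \(L^2(W)\), the first claim follows. In the duality formulation, \(\eta\) annihilates \(C_c^\infty(W)\), hence \(\eta=0\).

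\textbf{Step 3: independent prescription on disjoint portions.} Let \(W_1,W_2\) be disjoint relatively open portions of \(\Sigma\), with targets \(\eta_1\in L^2(W_1)\) and \(\eta_2\in L^2(W_2)\). Choose \(\phi_j\in C_c^\infty(W_j)\) with \(\|\phi_j-\eta_j\|_{L^2(W_j)}<\eps\). Each \(\phi_j\) has support in \(W_j\), so \(\phi_1+\phi_2\) restricts to \(\phi_j\) on \(W_j\). Applying the construction of Step~1 to \(\phi_1+\phi_2\) gives one \(z\in\mathcal T\) whose normal trace approximates \(\eta_1\) on \(W_1\) and \(\eta_2\) on \(W_2\) simultaneously.

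\textbf{Main obstacle.} Only the smoothness of \(z\) up to the corners \(\{0,T\}\times\partial M\) needs checking, and compact support of \(\phi\) in time settles it: \(z\) vanishes near \(t=0\) and \(t=T\), and the collar construction is smooth because \(\partial M\) and the normal coordinate are smooth. No generalized traces from Proposition~\ref{prop:graph-traces} are needed.
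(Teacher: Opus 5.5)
Your proposal is correct and is essentially the paper's proof. The collar function $-\theta(\rho)\rho\phi(t,y)$ with $\phi\in C_c^\infty(W)$ realizes every compactly supported smooth datum as the normal trace of an element of $\mathcal T$. Density of $C_c^\infty(W)$ in $L^2(W)$ then gives the first claim, and adding two data with disjoint supports gives the second. The annihilator framing and the preliminary appeal to Lemma~\ref{lem:joint-lifting} are unnecessary, since the direct construction already proves density. Your global boundary normal collar in place of the paper's partition of unity is an equally valid cosmetic variant.
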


\begin{proof}
For \(\eta\in C_c^\infty(W)\), extend \(\eta\) by zero to
\(\Sigma\).  In boundary normal coordinates \((y,\rho)\), take
\(z(t,y,\rho)=\rho\chi(\rho)\eta(t,y)\), with the sign of \(\rho\)
adjusted to the outward normal and \(\chi=1\) near zero.  A finite partition
of unity gives \(z\in\mathcal T\) with
\(\partial_\nu z|_\Sigma=\eta\).  Since
\(C_c^\infty(W)\) is dense in \(L^2(W)\), the first assertion
follows.  Applying the construction to the sum of two zero extensions proves
independent density on disjoint portions.
\end{proof}

\begin{proposition}[Direct problem and measurement map]
\label{prop:transposition}
Let \(q\in L^\infty(Q;\mathbb R)\).  For every
\[
 F\in L^2(Q),\qquad
 f\in H^{-1/2,-1/4}(\Sigma),\qquad
 u_0\in H^{-1}(M),
\]
there is a unique transposition solution \(u\in\Hc_+(Q)\) satisfying
\begin{equation}\label{eq:transposition-problem}
 P_qu=F\quad\text{in }Q,\qquad
 \tau_0u=f,\qquad r_0u=u_0.
\end{equation}
Moreover,
\begin{align}\label{eq:transposition-bound}
 &\|u\|_{\Hc_+(Q)}+\|r_Tu\|_{H^{-1}(M)}
 +\|\tau_1u\|_{H^{-3/2,-3/4}(\Sigma)}\notag\\
 &\qquad\leq C\big(\|F\|_{L^2(Q)}
 +\|f\|_{H^{-1/2,-1/4}(\Sigma)}+\|u_0\|_{H^{-1}(M)}\big).
\end{align}
The analogous backward result holds for \(P_q^t\), with \(r_Tu\)
prescribed in place of \(r_0u\).  In particular, the map
\eqref{eq:io-map} is well-defined and bounded for the product metric.
\end{proposition}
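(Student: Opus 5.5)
The plan is the classical transposition method of Lions--Magenes, using the generalized Green identity \eqref{eq:general-green} and the trace bounds of Proposition~\ref{prop:graph-traces}. First I solve the backward adjoint problem with $L^2$ source. For $G\in L^2(Q)$, standard Galerkin/maximal $L^2$ regularity for $-\partial_t-\Delta_{\widetilde g}+q$ with $q\in L^\infty$ gives a unique $Z\in H^{2,1}(Q)$ with
\[
 P_q^tZ=G,\qquad Z|_\Sigma=0,\qquad Z(T)=0,\qquad \|Z\|_{H^{2,1}(Q)}\le C\|G\|_{L^2(Q)}.
\]
Concretely, I use the energy estimate together with the eigenfunction expansion of the Dirichlet Laplacian, and treat $qZ$ as a perturbation via Gronwall in $L^2$ and then maximal regularity. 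By the trace theorems, $\partial_\nu Z\in H^{1/2,1/4}(\Sigma)$ and $Z(0)\in H_0^1(M)$, with norms bounded by $C\|G\|$.

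Next I define $u$ by duality. I set
\[
 \ell(G)=(F,Z)_{L^2(Q)}-\langle f,\partial_\nu Z\rangle_\Sigma+\langle u_0,Z(0)\rangle_M .
\]
This is exactly what \eqref{eq:general-green} dictates for a smooth solution, with $\tau_1u$ paired against $Z|_\Sigma=0$ and $r_Tu$ paired against $Z(T)=0$. Then $|\ell(G)|\le C(\|F\|+\|f\|_{H^{-1/2,-1/4}}+\|u_0\|_{H^{-1}})\|G\|$, so Riesz gives $u\in L^2(Q)$ with $(u,G)=\ell(G)$.

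Taking $G=P_q^t\varphi$ for $\varphi\in C_c^\infty(Q)$ shows $P_qu=F$ in distributions. Hence $(\partial_t-\Delta_{\widetilde g})u=F-qu\in L^2$, so $u\in\Hc_+(Q)$ with the graph bound.

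To identify the traces, I apply the extended Green identity (Proposition~\ref{prop:graph-traces}) to $u$ and to an arbitrary $Z$ of the above type. Subtracting the defining identity gives
\[
 \langle\tau_0u-f,\partial_\nu Z\rangle_\Sigma=\langle r_0u-u_0,Z(0)\rangle_M\quad\text{for all such }Z .
\]
Using the lifting Lemma~\ref{lem:joint-lifting}, I choose $Z\in H^{2,1}$ with $Z|_\Sigma=0$, $Z(T)=0$, and with $\partial_\nu Z=\eta$, $Z(0)=\psi$ prescribed independently. Setting $G:=P_q^tZ\in L^2$, the uniqueness for the backward problem shows this $Z$ is the one attached to $G$. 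Independence of $\eta$ and $\psi$ then yields $\tau_0u=f$ and $r_0u=u_0$.

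The bounds on $r_Tu$ and $\tau_1u$ in \eqref{eq:transposition-bound} follow directly from \eqref{eq:graph-trace-bound}. For uniqueness, I take a solution with zero data. The Green identity against every $Z$ gives $(u,G)=0$ for all $G$, so $u=0$.

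The backward result for $P_q^t$ follows by time reversal $t\mapsto T-t$, which swaps the roles of $r_0$ and $r_T$. For the measurement map I restrict to $f\in\D_\U$ and $F=0$, and read off $(r_Tu,\tau_1u|_\V)$. Restriction to $\V$ is bounded into the quotient space $H^{-3/2,-3/4}(\V)$, so \eqref{eq:io-map} is bounded.

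I expect the main obstacle to be the surjectivity step in the trace identification. One must check that the liftings $Z$ produce admissible test functions, i.e.\ that every $H^{2,1}$ function with $Z|_\Sigma=0$, $Z(T)=0$ is the adjoint solution for its own source. This is where the corner compatibility and the uniqueness for $H^{2,1}$ backward solutions must be invoked carefully.
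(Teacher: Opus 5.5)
Your proposal is correct and follows essentially the same transposition argument as the paper: an $H^{2,1}$ backward adjoint solution, Riesz representation of the same functional, and the generalized Green identity combined with independent liftings of $\partial_\nu Z$ and $Z(0)$. Your explicit use of backward uniqueness to identify a lifted $Z$ with $z_{P_q^tZ}$ is exactly the step the paper leaves implicit. The one point the paper treats more carefully is the bilinear-versus-sesquilinear Riesz representation, which it handles by solving for real data first; in your version the same issue is fixed by taking the complex conjugate of the Riesz representative.
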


\begin{proof}
It is enough to prove the statement for real data and then apply it separately
to real and imaginary parts.  Thus the Riesz argument below is carried out in
the real \(L^2\) Hilbert space; after complexification, all resulting Green
identities are the complex bilinear identities fixed above.

For \(G\in L^2(Q)\), let \(z_G\in H^{2,1}(Q)\) be the unique strong
solution of
\[
 P_q^tz_G=G\quad\text{in }Q,\qquad
 z_G|_\Sigma=0,\qquad z_G(T)=0.
\]
Maximal \(L^2\) parabolic regularity and the trace theorem for $H^{2,1}(Q)$ give
\begin{equation}\label{eq:backward-regularity}
 \|z_G\|_{H^{2,1}(Q)}
 +\|\partial_\nu z_G\|_{H^{1/2,1/4}(\Sigma)}
 +\|z_G(0)\|_{H_0^1(M)}\leq C\|G\|_{L^2(Q)}.
\end{equation}
Define
\begin{align}\label{eq:transposition-functional}
 \mathscr L(G)=(F,z_G)_{L^2(Q)}
 -\langle f,\partial_\nu z_G\rangle_\Sigma
 +\langle u_0,z_G(0)\rangle_M.
\end{align}
By \eqref{eq:backward-regularity}, this is a bounded functional on
\(L^2(Q)\).  Riesz representation gives a unique \(u\in L^2(Q)\) such
that
\begin{equation}\label{eq:transposition-definition}
 (u,G)_{L^2(Q)}=\mathscr L(G),\qquad G\in L^2(Q).
\end{equation}
Taking \(G=P_q^t\zeta\), first for \(\zeta\in C_c^\infty(Q)\), shows
that \(P_qu=F\) in distributions and hence \(u\in\Hc_+(Q)\).  If
\(z\in H^{2,1}(Q)\) has zero lateral trace and \(z(T)=0\), insert
\(G=P_q^tz\) in \eqref{eq:transposition-definition} and compare with
\eqref{eq:general-green}.  One obtains
\[
 \langle\tau_0u-f,\partial_\nu z\rangle_\Sigma
 -\langle r_0u-u_0,z(0)\rangle_M=0.
\]
The trace-extension theorem permits the two test traces to be prescribed
independently, so \(\tau_0u=f\) and \(r_0u=u_0\).  The Riesz estimate,
the equation, and Proposition~\ref{prop:graph-traces} give
\eqref{eq:transposition-bound}.  If all data vanish, then
\eqref{eq:transposition-definition} gives \(u=0\), proving uniqueness.
Time reversal proves the backward statement.  Setting \(F=0\), restricting
the Neumann trace to \(\V\), and using the quotient norm prove boundedness
of the partial map.
\end{proof}

\subsection{Regularity of the difference of the input--output maps}

\begin{proposition}[Regularity of the difference map]\label{prop:difference-map}
Let $q_1,q_2\in L^\infty(Q;\mathbb R)$ and let $u_j$ be the transposition solutions with
the same input $(u_0,f)\in\X_\U$.  Then $w=u_1-u_2$ belongs to $H^{2,1}(Q)$ and
\begin{equation}\label{eq:difference-regularity}
 \|w\|_{H^{2,1}(Q)}
 \leq C\|q_1-q_2\|_{L^\infty(Q)}
       \|(u_0,f)\|_{\X_\U}.
\end{equation}
Consequently, \eqref{eq:improved-range} holds and
\begin{equation}\label{eq:difference-map-bound}
 \|\Lambda^{\U,\V}_{g,q_1}-\Lambda^{\U,\V}_{g,q_2}\|_*
 \leq C\|q_1-q_2\|_{L^\infty(Q)}.
\end{equation}
\end{proposition}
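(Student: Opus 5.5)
The plan is to write the equation satisfied by $w$ and apply maximal $L^2$ parabolic regularity, with the trace estimates of Proposition~\ref{prop:transposition} supplying the right-hand side. Since $P_{q_1}u_1=0$ and $P_{q_2}u_2=0$, we get
\[
 P_{q_1}w=(q_2-q_1)u_2\quad\text{in }Q
\]
in the sense of distributions. Both $u_j$ lie in $\Hc_+(Q)$ with the same generalized traces $\tau_0u_j=f$ and $r_0u_j=u_0$. By linearity of the trace maps in Proposition~\ref{prop:graph-traces}, this gives $w\in\Hc_+(Q)$ with $\tau_0w=0$ and $r_0w=0$. The source $F:=(q_2-q_1)u_2$ lies in $L^2(Q)$, and \eqref{eq:transposition-bound} applied to $u_2$ (with zero interior source) gives
\[
 \|F\|_{L^2(Q)}\leq\|q_1-q_2\|_{L^\infty(Q)}\|u_2\|_{L^2(Q)}\leq C\|q_1-q_2\|_{L^\infty(Q)}\|(u_0,f)\|_{\X_\U}.
\]

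Next, let $\tilde w\in H^{2,1}(Q)$ be the strong solution of $P_{q_1}\tilde w=F$ with $\tilde w|_\Sigma=0$ and $\tilde w(0)=0$. Maximal $L^2$ regularity for bounded potentials, the same fact already invoked in \eqref{eq:backward-regularity} after time reversal, gives $\|\tilde w\|_{H^{2,1}(Q)}\leq C\|F\|_{L^2(Q)}$, with $C$ depending on the $L^\infty$ bound of $q_1$.

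The main point is to identify $\tilde w$ with $w$, that is, to show that the strong solution is also the transposition solution. Since $\tilde w$ is classical enough, Green's formula \eqref{eq:general-green} holds for it against every $z\in H^{2,1}(Q)$ with $z|_\Sigma=0$ and $z(T)=0$. This shows that $\tilde w$ satisfies the defining identity \eqref{eq:transposition-definition} with data $(F,0,0)$. The uniqueness part of Proposition~\ref{prop:transposition} then gives $w=\tilde w$, which proves \eqref{eq:difference-regularity}.

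Finally, the $H^{2,1}$ trace theorem gives $\partial_\nu w\in H^{1/2,1/4}(\Sigma)$, and restricting to $\V$ does not increase the quotient norm. The endpoint trace gives $w(T)\in H^1(M)$ with zero boundary trace, hence $w(T)\in H_0^1(M)$; this is the same trace fact used for $z_G(0)$ in \eqref{eq:backward-regularity}. Both traces are bounded by $C\|w\|_{H^{2,1}(Q)}$. Dividing by $\|(u_0,f)\|_{\X_\U}$ and taking the supremum in \eqref{eq:measurement-norm} yields \eqref{eq:improved-range} and \eqref{eq:difference-map-bound}.
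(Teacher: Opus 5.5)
Your proposal is correct and follows essentially the same route as the paper. You derive $P_{q_1}w=(q_2-q_1)u_2$ with zero initial and Dirichlet traces, bound $u_2$ in $L^2(Q)$ by the transposition estimate, identify $w$ with the strong $H^{2,1}(Q)$ solution through uniqueness of transposition solutions, and finish with the $H^{2,1}$ endpoint and normal trace theorems; your Green-formula check that the strong solution satisfies \eqref{eq:transposition-definition} only spells out the identification step that the paper states in one line.
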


\begin{proof}
The function $w$ has zero initial and lateral Dirichlet data and satisfies
\[
 P_{q_1}w=(q_2-q_1)u_2\in L^2(Q).
\]
The transposition estimate gives
\[
 \|u_2\|_{L^2(Q)}\leq C\|(u_0,f)\|_{\X_\U}.
\]
Let \(\widetilde w\) be the strong solution of the same inhomogeneous
problem.  Maximal \(L^2\) parabolic regularity for the zero initial and
Dirichlet problem gives
\[
 \|\widetilde w\|_{H^{2,1}(Q)}
 \leq C\|(q_2-q_1)u_2\|_{L^2(Q)}.
\]
Both \(w\) and \(\widetilde w\) solve the same problem in the transposition
sense, so uniqueness of transposition solutions implies \(w=\widetilde w\).
This proves \eqref{eq:difference-regularity}.  The endpoint and normal trace
theorems for \(H^{2,1}(Q)\) yield
\[
 w(T)\in H_0^1(M),\qquad
 \partial_\nu w\in H^{1/2,1/4}(\Sigma),
\]
with norms controlled by \(\|w\|_{H^{2,1}(Q)}\).  Restriction to \(\V\)
then proves \eqref{eq:difference-map-bound}.
\end{proof}

\subsection{Conformal reduction}

We now return to the metric in \eqref{eq:cta}.  Put $\rho=(n-2)/4$ and
\begin{equation}\label{eq:conformal-potential}
 \widetilde q=cq+c^{-\rho}\Delta_{\widetilde g}(c^\rho).
\end{equation}
A direct calculation gives
\begin{equation}\label{eq:conformal-identity}
 c^{(n+2)/4}(c^{-1}\partial_t-\Delta_g+q)(c^{-\rho}v)
 = (\partial_t-\Delta_{\widetilde g}+\widetilde q)v.
\end{equation}
For completeness, put \(T_cu=c^\rho u\).  The same identity with \(q=0\)
shows that \(T_c\) is an isomorphism between the maximal graph spaces of the
original and product operators.  Hence the generalized traces for the
original equation are obtained by pullback from
Proposition~\ref{prop:graph-traces}; this does not presuppose solvability of
the original problem.  If tildes denote product-metric traces, then
\begin{align}\label{eq:conformal-traces}
 \widetilde\tau_0(T_cu)&=c^\rho\tau_0u,
 &\widetilde r_j(T_cu)&=c^\rho r_ju,\quad j\in\{0,T\},\notag\\
 \widetilde\tau_1(T_cu)&=c^{\rho+1/2}\tau_1u
 +\rho c^{-1}(\partial_{\widetilde\nu}c)
      \widetilde\tau_0(T_cu).
\end{align}
The last formula follows from
\(\nu_g=c^{-1/2}\widetilde\nu\) and the product rule.  Every
multiplication operator in \eqref{eq:conformal-traces} is an isomorphism on
the relevant negative-order trace or restriction space.  Applying
Proposition~\ref{prop:transposition} to \(v=T_cu\) therefore proves the
existence, uniqueness, and estimate for the original equation
\eqref{eq:ibvp}.  The second term in the Neumann formula depends only on the
known conformal factor and the prescribed Dirichlet trace, so it cancels in
the difference of two maps.
Thus
\begin{equation}\label{eq:map-equivalence}
 C^{-1}\|\Lambda^{\U,\V}_{c,g,q_1}-\Lambda^{\U,\V}_{c,g,q_2}\|_*
 \leq
 \|\Lambda^{\U,\V}_{\widetilde g,\widetilde q_1}
      -\Lambda^{\U,\V}_{\widetilde g,\widetilde q_2}\|_*
 \leq
 C\|\Lambda^{\U,\V}_{c,g,q_1}-\Lambda^{\U,\V}_{c,g,q_2}\|_*.
\end{equation}
Moreover, $\widetilde q_1-\widetilde q_2=c(q_1-q_2)$, so the relevant
$H^{-1}$ norms are equivalent.  It is therefore enough to prove
Theorem~\ref{thm:main} for $c=1$.  From now on $g,q$ denote the reduced product metric and potential,
and $\Lambda_{g,q}^{\U,\V}$ abbreviates $\Lambda_{1,g,q}^{\U,\V}$.
The original notation $\Lambda_{c,g,q}^{\U,\V}$ is retained only in
the unreduced main statements.

\section{Gaussian beam quasimodes with uniform estimates}\label{sec:beams}

Fix
\begin{equation}\label{eq:alpha-beta}
 \frac1{\sqrt3}<\beta<1,
 \qquad \alpha=\sqrt{1-\beta^2},
\end{equation}
and introduce the parabolic Carleman weight
\begin{equation}\label{eq:weight}
 \phi_h(t,x)=x_1+\frac{\beta^2t}{h},
 \qquad
 \Phi_h=\frac{\phi_h}{h}=\frac{x_1}{h}+\frac{\beta^2t}{h^2}.
\end{equation}
The scaled conjugated operators are
\begin{align}\label{eq:conjugated-operators}
 \mathcal L_{+,h}&=h^2e^{-\Phi_h}P_qe^{\Phi_h}
 =h^2\partial_t-h^2\Delta_g-2h\partial_{x_1}-\alpha^2+h^2q,\\
 \mathcal L_{-,h}&=h^2e^{\Phi_h}P_q^te^{-\Phi_h}
 =-h^2\partial_t-h^2\Delta_g+2h\partial_{x_1}-\alpha^2+h^2q.
\end{align}

Let $\gamma:[0,L]\to M_0$ be non-tangential and put
\begin{equation}\label{eq:beam-parameters}
 s_h=h^{-1}+i\sigma,
 \qquad \eta=\alpha^2\sigma.
\end{equation}
We construct Gaussian beam quasimodes $v_{s_h}$ for the operator
$-h^2\Delta_{g_0}-\alpha^2h^2s_h^2$ on the transversal manifold.
In the uniqueness argument it is
enough to know weak convergence of $|v_{s_h}|^2$ to the weighted arclength
measure on $\gamma$.  Stability requires a rate.  The next proposition is the
quantitative form needed below.

\begin{proposition}[Uniform estimates for Gaussian beam quasimodes]\label{prop:qgb}
Let $\G\Subset\Gamma_{\mathrm{geo}}$ satisfy Definition~\ref{def:beam-regular},
and let $\sigma$ range in a fixed compact interval.
For every $\gamma\in\G$ there are quasimodes $v_{s_h,\gamma}\in C^\infty(M_0)$
such that, uniformly in $\gamma$ and $\sigma$,
\begin{align}\label{eq:beam-residual}
 &\|v_{s_h,\gamma}\|_{L^2(M_0)}
 +\|h\nabla_{g_0}v_{s_h,\gamma}\|_{L^2(M_0)}\leq C,\\
 &\|(-h^2\Delta_{g_0}-\alpha^2h^2s_h^2)v_{s_h,\gamma}\|_{L^2(M_0)}
 \leq Ch^{3/2}.
\end{align}
They may be normalized so that for every $F\in W^{1,\infty}(M_0)$,
\begin{equation}\label{eq:quant-concentration}
 \left|
 \int_{M_0}F(x')|v_{s_h,\gamma}(x')|^2\dd V_{g_0}
 -\int_0^Le^{-2\alpha\sigma r}F(\gamma(r))\dd r
 \right|
 \leq Ch^{1/2}\|F\|_{W^{1,\infty}(M_0)}.
\end{equation}
The boundary norms of the beams are uniformly bounded on every portion of
$\partial M$ where $|\partial_\nu x_1|\geq\delta>0$.
\end{proposition}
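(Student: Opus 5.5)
We will use the standard Gaussian beam ansatz, built in Fermi coordinates along each $\gamma\in\G$, keeping every constant tied to the bounds $B_k$, $J_*$, $L_*$, $\epsilon_*$, $\kappa_*$ of Definition~\ref{def:beam-regular}.

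\emph{Local beam.} Cover the extended segment $[-\epsilon_*,L_\gamma+\epsilon_*]$ by at most $J_*$ Fermi tubes. In one tube, with coordinates $(r,y)$, $y\in\mathbb R^{n-2}$, set
\[
 v^{(j)}=h^{-(n-2)/4}e^{is_h\alpha\Theta(r,y)}a(r,y)\chi_\delta(y),
 \qquad \Theta=r+\tfrac12\langle H(r)y,y\rangle .
\]
The matrix $H$ solves the Riccati equation $\dot H+H^2+F(r)=0$, with $F$ the curvature term; its initial value has $\operatorname{Im}H(0)>0$, and it is continued from tube to tube. The amplitude $a=a_0(r)$ solves the first-order transport equation, giving $a_0=\det(Y(r))^{-1/2}e^{-\alpha\sigma r}$ up to normalization, where $Y$ is the Jacobi-field matrix with $H=\dot YY^{-1}$.

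The exponential factor comes from the complex frequency. Indeed $e^{i s_h\alpha\Theta}=e^{i\alpha\Theta/h}e^{-\alpha\sigma\Theta}$, so on the ray the modulus carries $e^{-\alpha\sigma r}$. This is the source of the weight $e^{-2\alpha\sigma r}$ in \eqref{eq:quant-concentration}.

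\emph{Bounds for a single tube.} Since $\operatorname{Im}H$ is positive definite, $|e^{is_h\alpha\Theta}|\le Ce^{-c|y|^2/h}$. The constants $c,C$ are uniform because the Riccati solutions are continuous in the incoming parameters on a compact set, and $\sigma$ ranges over a compact interval. Computing $(-h^2\Delta_{g_0}-\alpha^2h^2s_h^2)v$, the eikonal equation is solved to second order in $y$, leaving an $O(|y|^3)$ error multiplied by $h^{-2}\cdot h^2=O(1)$. Likewise the transport equation leaves $O(h|y|)$. Since $|y|^k e^{-c|y|^2/h}$ has weighted $L^2$ norm $O(h^{k/2})$, the total residual is $O(h^{3/2})$; the cutoff $\chi_\delta$ contributes only $O(e^{-c/h})$. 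The bounds $\|v\|_{L^2}+\|h\nabla v\|_{L^2}\le C$ follow from the same Gaussian integrals.

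\emph{Gluing and concentration.} Write $v=\sum_j\theta_jv^{(j)}$ with a partition of unity along $r$; consecutive pieces agree on overlaps because the ODE data are continued. A point of $M_0$ may lie in several tubes when $\gamma$ returns near itself. For
\[
 \int F|v|^2=\sum_{j,k}\int F\,\theta_j\theta_k v^{(j)}\overline{v^{(k)}},
\]
the diagonal and same-branch terms are handled by substituting $y=h^{1/2}z$ and Taylor expanding: $F(r,h^{1/2}z)=F(\gamma(r))+O(h^{1/2}|z|\,\|F\|_{W^{1,\infty}})$. With the normalization $|\det Y|^{-1}\int e^{-2\alpha\operatorname{Im}\langle Hz,z\rangle}dz\cdot h^{0}=1$, this gives $\int e^{-2\alpha\sigma r}F(\gamma(r))\,dr+O(h^{1/2})$.

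\emph{Main obstacle.} The hardest part is the cross terms between distinct branches with $|r-s|\ge\rho_*$ and $d(\gamma(r),\gamma(s))<\delta_{\mathrm{ret}}$. Their phase is $e^{i\alpha(\Theta_j-\overline{\Theta_k})/h}$ up to bounded factors, and the return condition gives $|\nabla\operatorname{Re}(\Theta_j-\Theta_k)|\ge\kappa_*/2$ on the overlap of the Gaussian cores. The plan is to integrate by parts once along this gradient (the content of Lemma~\ref{lem:beam-cross}, which we would state and prove here). That gains a factor of $h$ against derivatives of $F$, $a$ and the Gaussians. Derivatives falling on the Gaussians cost $h^{-1/2}$, and derivatives falling on $F$ are controlled only in $W^{1,\infty}$. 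The net result is a cross term of size $O(h^{1/2}\|F\|_{W^{1,\infty}})$, which is exactly the stated rate.

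\emph{Boundary bound.} On portions of $\partial M$ with $|\partial_\nu x_1|\ge\delta$, the transversal beam is evaluated on a hypersurface. Boundedness there follows from the $L^2$ bound $\|v\|_{L^2}+\|h\nabla v\|_{L^2}\le C$ via the trace inequality, since $x_1$ is a parameter transverse to $M_0$. The uniform non-tangential endpoint condition, with constant $\kappa_*$, keeps the beam a controlled Gaussian up to $\partial M_0$, so the trace bound is uniform in $\gamma\in\G$ and in $\sigma$.
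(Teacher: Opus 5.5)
\textbf{Verdict: the boundary-trace step has a genuine gap; there are also two smaller errors.}

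Your overall architecture is the paper's: Fermi-coordinate beams with a Riccati phase and transport amplitude, an arclength partition along each passage, a $\sqrt h$-scale Taylor expansion of $F$ on the diagonal, and one nonstationary integration by parts with an $h^{-1/2}$ loss for returns. The boundary step, however, fails as written.

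The trace inequality applied to $v\circ\pi_0$ with only $\|v\|_{L^2}+\|h\nabla v\|_{L^2}\leq C$ gives $\|v\circ\pi_0\|_{L^2(\partial M)}^2\leq C\|v\|_{L^2(M_0)}\|v\|_{H^1(M_0)}\leq Ch^{-1}$. That is a loss of $h^{-1/2}$. Nothing in the argument distinguishes $\{|\partial_\nu x_1|\geq\delta\}$ from the glancing set, where no uniform bound holds in general.

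Uniformity is essential here. The trace $f_{+,h}=-V_{+,h}$ enters \eqref{eq:solvability-plus} with the factor $h^{1/2}$, so an $h^{-1/2}$ loss would leave an $O(1)$ remainder in Proposition~\ref{prop:cgo}. The endpoint constant $\kappa_*$ is irrelevant at this point, because $\pi_0(\partial M)\subset K^\circ\Subset M_0^{\mathrm{int}}$ never approaches $\partial M_0$.

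The missing idea is derivative-free. Your remark about $x_1$ points toward it, but the argument you state does not use it.
\begin{itemize}
\item $v\circ\pi_0$ does not depend on $x_1$.
\item Where $|\partial_\nu x_1|\geq\delta$, the boundary is locally a graph $x_1=\phi(x')$.
\item So $\pi_0|_{\partial M}$ is a local diffeomorphism with surface Jacobian $|\partial_\nu x_1|^{-1}\leq\delta^{-1}$, and by compactness it has boundedly many sheets.
\end{itemize}
Hence the boundary norm is at most $C_\delta\|v\|_{L^2(M_0)}$.

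\textbf{Amplitude.} With the complex phase $e^{is_h\alpha\Theta}$, the factor $s_h$ multiplies the entire transport bracket in \eqref{eq:beam-residual-identity}. So transport reads $\dot a_0+\tfrac12\operatorname{tr}H\,a_0=0$, independent of $\sigma$, and $a_0=c(\det Y)^{-1/2}$.

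Your extra factor $e^{-\alpha\sigma r}$ counts the attenuation twice. It leaves the term $2i\alpha^2\sigma h^2s_ha_0$ on the ray, so the residual is only $O(h)$, and it produces the weight $e^{-4\alpha\sigma r}$. That factor belongs instead to the real-phase ansatz $e^{i\alpha\Theta/h}$, where the $2ih\sigma$ part of $h^2s_h^2$enters transport. Your $r$-independent normalization also needs the conserved Wronskian, which gives $|\det Y|^2\det\operatorname{Im}H=\mathrm{const}$.

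\textbf{Endpoints on $\partial M_0$.} This is where $\kappa_*$ is actually needed, and you never treat it.
\begin{itemize}
\item The beam lives on the extended segment while the integral is over $M_0$. The full-fiber Gaussian computation must therefore be corrected near the endpoints. This truncation costs $O(h^{1/2})$ precisely because the endpoints are uniformly non-tangential.
\item Integrating by parts in a cross term over $M_0$ produces boundary integrals on $\partial M_0$ when a return occurs near an endpoint. The paper bounds these by $O(h)\|F\|_{\infty}$, using the uniform $L^2(\partial M_0)$ bound for a transversally cut Gaussian.
\end{itemize}
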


\begin{lemma}[Local construction and smooth dependence on parameters]\label{lem:local-beams}
In a parameter chart of a family satisfying Definition~\ref{def:beam-regular},
the phase and leading
amplitude can be chosen smoothly in the incoming data. The local beam
satisfies \eqref{eq:local-beam-estimates} and the normalized diagonal
estimate \eqref{eq:single-branch-rate}, uniformly on the fixed compact
parameter support and for bounded $\sigma$.
\end{lemma}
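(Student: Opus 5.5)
We work in a single Fermi tube of the cover from Definition~\ref{def:beam-regular}, with coordinates $(r,y)\in[-\epsilon_*,L_\gamma+\epsilon_*]\times B_{\delta_{\mathrm{tube}}}(0)\subset\mathbb R\times\mathbb R^{n-2}$. There $g_0^{jk}(r,0)=\delta^{jk}$ and $\partial_y g_0^{jk}(r,0)=0$. The charts depend smoothly on the incoming parameter $z$, so every metric coefficient is a smooth function of $(z,r,y)$ with $C^k$ bounds $B_k$.

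\textbf{Phase.} We seek
\[
 v=h^{-(n-2)/4}e^{i s_h\alpha\Theta}\,a_0\,\chi(y/\delta'),\qquad
 \Theta=r+\tfrac12\langle H(r)y,y\rangle.
\]
For the phase, the eikonal equation $\langle d\Theta,d\Theta\rangle_{g_0}=1$ is imposed to order $|y|^3$ on $\gamma$. This reduces to the matrix Riccati equation
\[
 \dot H+H^2+F(r)=0,\qquad F_{jk}=\tfrac12\partial_{y_j}\partial_{y_k}g_0^{11}(r,0).
\]
We take the initial datum $H(-\epsilon_*)=iI$. The standard argument gives $H=\dot Y Y^{-1}$, where $Y$ solves the linear Jacobi system $\ddot Y+FY=0$, with $\operatorname{Im}H$ positive definite and $Y$ invertible. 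The inverse is controlled through the conserved symplectic form $Y^*\dot Y-\dot Y^*Y=2iI$.

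Linear ODE theory on an interval of length at most $L_*+2\epsilon_*$, with coefficients bounded by $B_2$, makes $Y,\dot Y$ smooth in $z$. It also gives $\operatorname{Im}H\geq c_0 I$ with $c_0>0$ depending only on $L_*,\epsilon_*,B_2$. Compactness of the parameter support upgrades this to a uniform bound.

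\textbf{Amplitude.} The leading transport equation is
\[
 2\langle d\Theta,da_0\rangle+(\Delta_{g_0}\Theta)a_0-2\alpha\sigma a_0=0
\]
on $\gamma$, where the last term comes from the $i\sigma$ part of $s_h^2=h^{-2}+2i\sigma h^{-1}-\sigma^2$. Its solution is
\[
 a_0(r)=e^{-\alpha\sigma r}\,(\det Y(r))^{-1/2}\times\text{const}.
\]
A branch of the square root is chosen continuously along $\gamma$, which is possible since $\det Y\neq0$. Again $a_0$ is smooth in $(z,\sigma)$ with uniform bounds for bounded $\sigma$.

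\textbf{Estimates.} The estimates \eqref{eq:local-beam-estimates} are then routine. Applying $-h^2\Delta_{g_0}-\alpha^2h^2s_h^2$ to $v$ leaves the following terms:
\begin{itemize}
\item eikonal errors of size $h^{-1}\cdot h^{-1}\cdot O(|y|^3)$ times $h^2$, i.e.\ $O(|y|^3)$;
\item transport errors of size $h\cdot O(|y|)$;
\item $O(h^2)$ terms;
\item cutoff terms, which are $O(h^\infty)$ because of the Gaussian $e^{-c_0|y|^2/h}$.
\end{itemize}
Since $\int|y|^k e^{-c_0|y|^2/h}h^{-(n-2)/2}\,dy\lesssim h^{k/2}$, the $L^2$ residual is $O(h^{3/2})$. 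The same computation gives $\|v\|_{L^2}+\|h\nabla v\|_{L^2}\le C$. All constants depend only on $c_0$ and finitely many $B_k$.

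\textbf{Diagonal estimate.} For the normalized estimate \eqref{eq:single-branch-rate}, we compute $|v|^2$. The phase contributes $e^{-2\alpha\sigma r}e^{-\alpha h^{-1}\langle\operatorname{Im}H\,y,y\rangle}$, up to the factor $e^{-2\sigma\alpha\operatorname{Im}(\ldots)}$, whose contributions are $O(|y|^2)$. We integrate against $F\in W^{1,\infty}$ with the volume Jacobian $J(r,y)=J(r,0)+O(|y|)$. The normalization constant is chosen so that
\[
 |\det Y|^{-1}\int h^{-(n-2)/2}e^{-\alpha h^{-1}\langle\operatorname{Im}H\,y,y\rangle}dy\;J(r,0)=1.
\]
This is consistent in $r$ by the Liouville identity $\det\operatorname{Im}H\,|\det Y|^2=\text{const}$. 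Writing $F(r,y)=F(r,0)+O(\|F\|_{W^{1,\infty}}|y|)$, the remainder integrates to $O(h^{1/2})\|F\|_{W^{1,\infty}}$.

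\textbf{Main obstacle.} We expect the hard step to be uniformity rather than the formal construction. Specifically:
\begin{itemize}
\item the lower bound $c_0$ on $\operatorname{Im}H$ must be uniform;
\item $\det Y$ must stay away from zero uniformly;
\item the square-root branches must be chosen smoothly in $z$ across the chart overlaps of the parameter atlas.
\end{itemize}
For the first two we would use the conserved quantity $\operatorname{Im}(Y^*\dot Y)=I$, which gives $|Yu|\,|\dot Yu|\geq|u|^2$, and hence invertibility with a quantitative bound from $B_2$. For the branch, the parameter chart is contractible and $\det Y(-\epsilon_*)=1$, so the branch is fixed at $r=-\epsilon_*$ and continued along $r$. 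This is well defined because the continuation happens over a simply connected region.
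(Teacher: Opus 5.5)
\textbf{The transport equation is wrong.} Your overall route is the paper's: Fermi coordinates, the Riccati equation solved through the Jacobi system with $Y=I$, $\dot Y=iI$ at the incoming end, the conserved form $Y^*\dot Y-\dot Y^*Y=2iI$ giving invertibility of $Y$ and $\operatorname{Im}H=(YY^*)^{-1}\geq\|Y\|^{-2}$, Gaussian moments, and normalization via $\det(\operatorname{Im}H)\,|\det Y|^2=\mathrm{const}$. However, the amplitude step, as written, breaks both conclusions of the lemma when $\sigma\neq0$.

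Your phase $e^{is_h\alpha\Theta}$ carries the full complex $s_h=h^{-1}+i\sigma$. Applying $-h^2\Delta_{g_0}-\alpha^2h^2s_h^2$ to $e^{is_h\alpha\Theta}a$ gives
\[
 e^{is_h\alpha\Theta}\big[\alpha^2h^2s_h^2(\langle d\Theta,d\Theta\rangle-1)a-ih^2\alpha s_h\{2\langle d\Theta,da\rangle+(\Delta_{g_0}\Theta)a\}-h^2\Delta_{g_0}a\big].
\]
The $2i\sigma h^{-1}$ part of $s_h^2$ is therefore cancelled entirely inside the eikonal term, and no $\sigma$-dependent term appears at order $h$. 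The correct transport equation on $\gamma$ is $2\dot a_0+\operatorname{tr}H\,a_0=0$, which is the paper's $\dot a_0+\frac12\operatorname{tr}H\,a_0=0$. Hence $a_0=c\,(\det Y)^{-1/2}$, with no exponential factor.

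With your extra term $-2\alpha\sigma a_0$, two things go wrong:
\begin{enumerate}[label=\textup{(\roman*)}]
\item On $\gamma$ the brace equals a nonzero multiple of $\alpha\sigma a_0$ rather than $O(|y|)$. The transport contribution to the residual is then of size $\asymp|\sigma|h$ in $L^2$, not $O(h^{3/2})$.
\item $|e^{is_h\alpha\Theta}|^2$ already contains $e^{-2\alpha\sigma r}$, as you note yourself in the diagonal step. An amplitude factor $e^{\mp\alpha\sigma r}$ would change the weight in \eqref{eq:single-branch-rate} to $1$ or $e^{-4\alpha\sigma r}$. Your normalization identity silently drops this factor, so the amplitude and normalization steps contradict each other.
\end{enumerate}
Separately, your displayed equation integrates to $e^{+\alpha\sigma r}(\det Y)^{-1/2}$, not $e^{-\alpha\sigma r}(\det Y)^{-1/2}$.

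\textbf{Repair.} Either convention below works, and with either one the rest of your argument goes through as you describe.
\begin{enumerate}[label=\textup{(\alph*)}]
\item Drop the $\sigma$-term. The attenuation $e^{-2\alpha\sigma r}$ then comes solely from the imaginary part of $s_h$ in the phase.
\item Use the real-frequency phase $e^{i\alpha\Theta/h}$. Then the $2i\alpha^2\sigma h$ term genuinely appears at order $h$, the transport equation becomes $2\dot a_0+\operatorname{tr}H\,a_0+2\alpha\sigma a_0=0$, and $a_0=c\,e^{-\alpha\sigma r}(\det Y)^{-1/2}$. The two beams differ by the factor $e^{-\alpha\sigma\langle Hy,y\rangle/2}=1+O(|y|^2)$, which is harmless.
\end{enumerate}
Finally, defining $a_0=a_0(r_*)\exp(-\frac12\int_{r_*}^r\operatorname{tr}H)$, as the paper does, makes smooth dependence on the incoming data automatic. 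It also removes the need for your square-root branch selection and contractibility argument.
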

\begin{proof}
The qualitative construction follows \cite[Section 7]{KenigSalo2013}
and \cite{DSFKLS2016}; see also \cite{Zworski2012} for the semiclassical
framework. We keep track of the uniform bounds needed here.
Put \(d=n-2\), extend \(M_0\) to a closed manifold, and extend every
\(\gamma\in\G\) a fixed distance beyond its endpoints.  The uniform geometric conditions and
compactness give a finite collection of Fermi-coordinate models
\[
 (r,y)\in I\times B(0,\delta),\qquad y\in\mathbb R^d,
\]
with the same \(\delta>0\), uniform metric bounds, and
\[
 \gamma(r)=(r,0),\qquad
 g_0^{jk}(r,0)=\delta^{jk},\qquad
 \partial_\ell g_0^{jk}(r,0)=0.
\]
Writing
\[
 g_0^{rr}(r,y)=1-\mathcal R(r)y\cdot y+O(|y|^3),
\]
let \(H\) solve the matrix Riccati equation
\[
 \dot H+H^2=\mathcal R,
 \qquad H(r_*)=H_*,\qquad \operatorname{Im}H_*>0.
\]
Take $H_*=iI_d$ at the incoming endpoint in each local smooth
orthonormal frame. Solve instead the linear system
\[
 \dot Y=Z,\quad \dot Z=\mathcal R Y,\quad Y(0)=I_d,\quad Z(0)=iI_d.
\]
Since $\mathcal R$ is real symmetric,
$Y^*Z-Z^*Y=2iI_d$ and $Y^TZ-Z^TY=0$ are conserved. The first
identity excludes a kernel of $Y$; hence $H=ZY^{-1}$ is defined on
the whole extended segment and is symmetric, and
\[
 \operatorname{Im}H=(Y^{-1})^*Y^{-1}\geq \|Y\|^{-2}I_d.
\]
Uniform bounds for $\mathcal R$ and $L_*$ give a uniform bound on
$Y,Z$ by Gronwall's inequality, and the conserved identity also bounds
$Y^{-1}$. This proves $\operatorname{Im}H\geq c_0I_d$ with one
choice of initial matrix, including in the presence of conjugate points.
The geodesic flow, parallel frame, curvature, $Y,Z,H$, and the
transport solution below depend smoothly on incoming data in every
parameter chart. Differentiating their ODEs gives uniform bounds for
each fixed finite number of parameter and arclength derivatives on
the compact parameter support. A global frame over $\Gamma$ is not
required: a finite parameter atlas suffices, and $iI_d$ is invariant
under orthogonal changes of frame.
Define
\begin{equation}\label{eq:beam-phase}
 \Theta(r,y)=\alpha\left(r+\frac12H(r)y\cdot y\right).
\end{equation}
Then, uniformly in the chosen charts,
\begin{equation}\label{eq:eikonal-order}
 \langle\dd\Theta,\dd\Theta\rangle_{g_0}-\alpha^2=O(|y|^3),
 \quad \Theta(r,0)=\alpha r,
 \quad \dd\Theta(r,0)=\alpha\dot\gamma(r)^\flat,
 \quad \operatorname{Im}\Theta\geq c|y|^2.
\end{equation}

Choose \(\chi_\perp\in C_c^\infty(B(0,\delta))\) equal to one near zero and
let \(a_0\) solve
\begin{equation}\label{eq:beam-transport}
 \dot a_0+\frac12\operatorname{tr}H\,a_0=0.
\end{equation}
The local beam is
\begin{equation}\label{eq:local-beam}
 v_{s_h}^{\rm loc}(r,y)
 =h^{-d/4}e^{is_h\Theta(r,y)}a_0(r)\chi_\perp(y).
\end{equation}
For later use, we spell out the residual calculation.  If
\(v=e^{is\Theta}a\), then
\begin{align}\label{eq:beam-residual-identity}
 &(-h^2\Delta_{g_0}-\alpha^2h^2s^2)v\notag\\
 &\quad=e^{is\Theta}\big[
 h^2s^2(\langle\dd\Theta,\dd\Theta\rangle_{g_0}-\alpha^2)a
 -ih^2s\{2\langle\dd\Theta,\dd a\rangle_{g_0}
 +(\Delta_{g_0}\Theta)a\}-h^2\Delta_{g_0}a\big].
\end{align}
Along \(y=0\), the expression in braces vanishes precisely when
\eqref{eq:beam-transport} holds.  Its Taylor expansion is therefore
\(O(|y|)\), uniformly in \(r\).  Since
\(h^2|s_h|^2=1+O(h)\) and \(h^2|s_h|=O(h)\), the eikonal and transport
contributions have sizes
\begin{align*}
 \|h^2s_h^2E_\Theta v_{s_h}^{\rm loc}\|_{L^2}
 &\leq C\||y|^3h^{-d/4}e^{-c|y|^2/h}\|_{L^2},\\
 \|h^2s_hT_\Theta(a_0\chi_\perp)e^{is_h\Theta}\|_{L^2}
 &\leq Ch\||y|h^{-d/4}e^{-c|y|^2/h}\|_{L^2},
\end{align*}
where \(E_\Theta\) and \(T_\Theta\) denote the two errors displayed in
\eqref{eq:beam-residual-identity}.  On the support of \(\dd\chi_\perp\), one
has \(\operatorname{Im}\Theta\geq c\delta^2\), so all cutoff
commutators are exponentially small.
The eikonal error in \eqref{eq:eikonal-order}, the first-order vanishing of
the transport error, and the Gaussian moment formula
\begin{equation}\label{eq:gaussian-moment}
 \big\|h^{-d/4}|y|^k e^{-c|y|^2/h}\big\|_{L^2(\mathbb R^d)}
 \leq C_kh^{k/2},\qquad k\geq0,
\end{equation}
give
\begin{align}\label{eq:local-beam-estimates}
 &\|v_{s_h}^{\rm loc}\|_{L^2}
  +\|h\nabla_{g_0}v_{s_h}^{\rm loc}\|_{L^2}\leq C,\notag\\
 &\|(-h^2\Delta_{g_0}-\alpha^2h^2s_h^2)
       v_{s_h}^{\rm loc}\|_{L^2}\leq Ch^{3/2}.
\end{align}
The three terms in \eqref{eq:beam-residual-identity} are consequently
\(O(h^{3/2})\), \(O(h^{3/2})\), and \(O(h^2)\), respectively.
Differentiating the beam once introduces at most one factor
\(h^{-1}\dd\Theta\), and hence
\(\|h\nabla v_{s_h}^{\rm loc}\|_{L^2}\leq C\).  All estimates are
uniform for \(|\sigma|\leq\sigma_0\).

\medskip\noindent\emph{Normalization.}
Write \(H=A+iB\).  Taking imaginary parts in the Riccati equation and using
Jacobi's formula shows that
\[
 \det B(r)=\det B(r_*)
 \exp\left(-2\int_{r_*}^r\operatorname{tr}A(\rho)\dd\rho\right).
\]
Together with \eqref{eq:beam-transport}, this permits the normalization
\begin{equation}\label{eq:beam-normalization}
 a_0(r_*)=\left(\frac{\alpha}{\pi}\right)^{d/4}
             \det B(r_*)^{1/4},
\end{equation}
for which the leading Gaussian mass at every \(r\) equals one.  If
\(J(r,y)\dd r\dd y=\dd V_{g_0}\), then \(J(r,0)=1\).  Hence, for a
single branch,
\begin{align}\label{eq:single-branch-rate}
 &\left|\int F(r,y)|v_{s_h}^{\rm loc}(r,y)|^2J(r,y)\dd y
       -e^{-2\alpha\sigma r}F(r,0)\right|\notag\\
 &\hspace{35mm}\leq
 Ch^{1/2}\|F\|_{W^{1,\infty}}.
\end{align}
To see this, subtract \(F(r,0)J(r,0)\), use
\(|F(r,y)J(r,y)-F(r,0)|\leq C|y|\|F\|_{W^{1,\infty}}\), and apply
\eqref{eq:gaussian-moment}.  The quadratic part of
\(e^{-2\sigma\operatorname{Re}\Theta}\), the cutoff tail, and the Taylor
remainder in the Jacobian contribute \(O(h)\) and are therefore harmless.
Integration in \(r\) preserves the \(O(h^{1/2})\) rate.

\end{proof}

\begin{lemma}[Global construction with uniform estimates]\label{lem:beam-gluing}
The local beams glue along each parameterized segment with an
$O(h^{3/2})$ residual and an $O(h^{1/2})$ change in their diagonal
concentration. The number of charts and all gluing constants are uniform.
\end{lemma}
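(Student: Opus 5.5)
We glue the local beams of Lemma~\ref{lem:local-beams} along each segment \(\gamma\in\G\) by a partition of unity in arclength. Cover the extended interval \([-\epsilon_*,L_\gamma+\epsilon_*]\) by the at most \(J_*\) Fermi tubes of Definition~\ref{def:beam-regular}, with intervals \(I_1,\dots,I_N\), \(N\le J_*\). Choose a smooth partition of unity \(\{\theta_j(r)\}\) subordinate to this cover, with derivatives bounded by constants depending only on \(J_*,\epsilon_*,L_*\). Along the geodesic we solve the Riccati system and transport equation once, globally in \(r\), using the linear \((Y,Z)\) system started at the incoming endpoint. Thus the matrix \(H(r)\) and the amplitude \(a_0(r)\) are single objects along \(\gamma\), independent of the chart.

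On an overlap \(I_j\cap I_k\), the two Fermi charts differ by an orthogonal change of the transverse frame, up to a transition map with uniformly bounded \(C^k\) norms. Here we use the bounds \(B_k\) and the fact that both charts have \(\gamma(r)=(r,0)\) and agree to first order at \(y=0\). Hence the two local phases \(\Theta_j,\Theta_k\) agree up to \(O(|y|^3)\), and the amplitudes agree up to \(O(|y|)\). We therefore define
\[
 v_{s_h,\gamma}=\sum_j\theta_j(r)\,v^{\rm loc}_{s_h,j}.
\]
The residual \((-h^2\Delta_{g_0}-\alpha^2h^2s_h^2)v_{s_h,\gamma}\) splits into two kinds of terms:
\begin{itemize}
\item \(\sum_j\theta_j\) times the local residuals, which are \(O(h^{3/2})\) by \eqref{eq:local-beam-estimates};
\item commutator terms \(h^2[\Delta,\theta_j]v^{\rm loc}_j\).
\end{itemize}
The commutator terms are \(-2h^2\theta_j'\partial_r v_j-h^2\theta_j''v_j\). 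Since \(\sum_j\theta_j'=0\), the leading part \(\sum_j\theta_j'\,h^2 i s_h\partial_r\Theta_j\, v_j\) reduces to \(\sum_j\theta_j'\,(\text{difference of } hs_h\partial_r\Theta_j e^{is_h\Theta_j}a_j \text{ from a reference branch})\). On the Gaussian scale \(|y|\lesssim h^{1/2}\), the difference of the exponentials is bounded by \(h^{-1}|\Theta_j-\Theta_k|\lesssim h^{-1}|y|^3\). By \eqref{eq:gaussian-moment} this gives \(h\cdot h^{-1}h^{3/2}=O(h^{3/2})\), and the amplitude difference gives \(h\cdot O(h^{1/2})\). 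The term \(h^2\theta_j''v_j\) is \(O(h^2)\). The \(L^2\) and \(h\nabla\) bounds follow directly from the local ones and \(N\le J_*\).

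For the diagonal concentration we apply \eqref{eq:single-branch-rate} on each tube. On overlaps, \(|v_{s_h,\gamma}|^2\) differs from \(|v^{\rm loc}_j|^2\) only through the same phase and amplitude discrepancies. Integrating those against \(F J\) and using \eqref{eq:gaussian-moment} with \(k=1\) costs \(O(h^{1/2})\|F\|_{W^{1,\infty}}\). Summing over \(r\)-slices reproduces \(\int_0^Le^{-2\alpha\sigma r}F(\gamma(r))\dd r\) up to \(O(h^{1/2})\).

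The step we expect to be the main obstacle is self-intersection: the segment may return into a tube where other branches live, and then the \(y\)-coordinates of distinct pieces overlap in \(M_0\). For the residual this is harmless, since each local residual is supported in its own tube. For the concentration, cross terms \(v_j\bar v_k\) between branches with \(|r-s|\ge\rho_*\) arise. We plan to control these using the oriented-direction separation \(|\dot\gamma(r)-\mathsf P_{s\to r}\dot\gamma(s)|\ge\kappa_*\): the real phase difference \(\operatorname{Re}(\Theta_j-\Theta_k)\) then has gradient bounded below near the crossing, and non-stationary phase with the \(W^{1,\infty}\) weight (one integration by parts) yields an \(O(h^{1/2})\) cross contribution. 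This step is deferred to Lemma~\ref{lem:beam-cross}. Uniformity of all constants in \(\gamma\) and \(\sigma\) comes from the smooth dependence of \(H,a_0\) and the charts on incoming data over the compact parameter support, together with \(|\sigma|\le\sigma_0\).
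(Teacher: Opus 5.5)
Your proposal is correct and follows essentially the same route as the paper. You use an arclength partition of unity with Riccati and transport data carried across overlaps; your single global solution is equivalent to the paper's transfer of terminal values of $H_j,a_j$ to the next chart. You then cancel commutators via $\sum_j\theta_j=1$ against a reference branch, use Gaussian moments for the $O(h^{3/2})$ residual and the $O(h^{1/2})$ change in concentration, and defer cross terms between distinct passages to Lemma~\ref{lem:beam-cross}, exactly as the paper does.

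One point is stated too loosely: first-order agreement of the two charts at $y=0$ does not by itself give $\Theta_j-\Theta_k=O(|y|^3)$. What does give it is that both phases have the same $2$-jet along $\gamma$. This follows either from frame invariance of the Riccati equation with common data, or from the fact that both Fermi charts have vanishing Christoffel symbols on $\gamma$, which forces the transition map to be $(r,y)\mapsto(r,Oy)+O(|y|^3)$ with $O$ orthogonal.
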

\begin{proof}
Choose consecutive parameter intervals \(I_j\) with
\(I_j\cap I_{j+1}\ne\varnothing\), and a partition of unity
\(\{\kappa_j\}\) subordinate to them.  On an overlap, the normal Fermi
frames are related by parallel transport along \(\gamma\).  The Riccati
equation is invariant under this orthogonal change of frame.  We use the
terminal values of \(H_j\) and \(a_j\) as the initial data in the next
chart.  Thus the local phases have matching two-jets on the central curve and
the leading amplitudes have matching transported values.  In
\[
 v_{s_h,\gamma}=\sum_j\kappa_j(r)v_{s_h,j}^{\rm loc},
\]
the leading commutators containing derivatives of \(\kappa_j\) cancel
because \(\sum_j\kappa_j=1\).  The remaining mismatch has the same
\(O(|y|^3)\) eikonal and \(O(|y|)\) transport vanishing as in
\eqref{eq:beam-residual-identity}.  More explicitly, on an overlap choose
one branch \(v_{s_h,1}^{\rm loc}\) as reference and write
\[
 \sum_j[-h^2\Delta_{g_0},\kappa_j]v_{s_h,j}^{\rm loc}
 =\sum_j[-h^2\Delta_{g_0},\kappa_j]
       (v_{s_h,j}^{\rm loc}-v_{s_h,1}^{\rm loc}).
\]
In common Fermi coordinates the matching conditions give
\[
 \Theta_j-\Theta_k=O(|y|^3),\qquad a_j-a_k=O(|y|).
\]
The Gaussian moment estimate therefore yields
\[
 \|v_{s_h,j}^{\rm loc}-v_{s_h,k}^{\rm loc}\|_{L^2}=O(h^{1/2}),
 \qquad
 \|\nabla(v_{s_h,j}^{\rm loc}-v_{s_h,k}^{\rm loc})\|_{L^2}
 =O(h^{-1/2}).
\]
Since
\[
 [-h^2\Delta_{g_0},\kappa]
 =-2h^2\langle\dd\kappa,\dd\,\cdot\,\rangle_{g_0}
   -h^2(\Delta_{g_0}\kappa),
\]
the commutator contribution is $O(h^{3/2})$. Hence the global
residual retains the same order. On a fixed passage, subtracting a
reference branch gives an $O(h^{1/2})$ error in $L^2$ and an $O(1)$
bound for both beams. Consequently,
\[
 \left|\int F(|v|^2-|v_{\rm ref}|^2)\right|
 \leq\|F\|_\infty\|v-v_{\rm ref}\|_2
          (\|v\|_2+\|v_{\rm ref}\|_2)
 \leq C\sqrt h\|F\|_\infty.
\]
The leading diagonal mass is unchanged because the arclength partition
sums to one before the squared modulus is taken.

\end{proof}

\begin{lemma}[Nonstationary phase estimate for cross terms]\label{lem:beam-cross}
For distinct passages of the selected segment, the cross contribution
to concentration is bounded by $Ch^{1/2}\|F\|_{W^{1,\infty}}$.
The same bound holds for the sum of all such cross terms, uniformly
on the parameter support.
\end{lemma}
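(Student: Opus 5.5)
The cross terms come from self-intersections of the projected segment: the glued beam is $v=\sum_j\kappa_j(r)v^{\rm loc}_j$, and on a region where two chart pieces with arclength parameters $r$ and $s$, $|r-s|\geq\rho_*$, both live near the same point $x'$, the integrand $F|v|^2$ contains $F\,v_j\overline{v_k}$. I will show that each such product is $O(h^{1/2})$ after integration. Then I will sum over the uniformly bounded number of such pairs.

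\emph{Localization.} The beams are Gaussian of width $h^{1/2}$ about their central curves. Hence, up to $O(h^\infty)$, the cross term only sees points $x'$ within distance $C h^{1/2-\epsilon}$, or a fixed small $\delta$, of both $\gamma(r)$ and $\gamma(s)$. If $d_{g_0}(\gamma(r),\gamma(s))\geq\delta_{\mathrm{ret}}$ for all relevant $r,s$ in the two charts, then the product $|v_j||v_k|$ is bounded by $h^{-d/2}e^{-c\delta^2/h}$ and is negligible. Otherwise, Definition~\ref{def:beam-regular} gives the oriented-direction separation
\[
|\dot\gamma(r)-\mathsf P_{s\to r}\dot\gamma(s)|\geq\kappa_*.
\]
Write $v_j\overline{v_k}=h^{-d/2}e^{i\Psi/h}b$ with
\[
\Psi=\mathrm{Re}(\Theta_j-\Theta_k)+i\,\mathrm{Im}(\Theta_j+\Theta_k)+O(h),
\]
where the $i\sigma$ part of $s_h$ only changes the amplitude smoothly. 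By \eqref{eq:eikonal-order}, $d\,\mathrm{Re}\Theta_j=\alpha\dot\gamma(r)^\flat+O(|y_j|)$ near the curve, and similarly for $k$. Near the intersection point, parallel transport over the short distance differs from the identity by $O(\text{distance})$. So after shrinking $\delta$ uniformly we get $|d\,\mathrm{Re}\Psi|\geq\alpha\kappa_*/2$ on the support, uniformly on the compact parameter set by the $B_k$ bounds.

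\emph{Integration by parts.} Take the vector field $X=\nabla\mathrm{Re}\Psi/|\nabla\mathrm{Re}\Psi|^2$, which satisfies $X e^{i\mathrm{Re}\Psi/h}=(i/h)e^{i\mathrm{Re}\Psi/h}$. Integrate by parts once in $\int F b\,h^{-d/2}e^{i\Psi/h}\,dV$. The derivative then falls on three factors.
\begin{itemize}
\item On $F$, it costs $\|F\|_{W^{1,\infty}}$.
\item On $b$ and the cutoffs, it is harmless.
\item On the Gaussian factor $e^{-\mathrm{Im}\Psi/h}$, it produces $h^{-1}\nabla\mathrm{Im}\Psi=O(h^{-1}(|y_j|+|y_k|))$.
\end{itemize}
The gained factor $h$ from $X$ multiplies this, and the Gaussian moment bound \eqref{eq:gaussian-moment} gives $O(h^{1/2})$ for the last term. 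The Gaussian mass $\int h^{-d/2}|b|e^{-\mathrm{Im}\Psi/h}$ is bounded: the two transverse Gaussians in $d$ directions each, whose directions are transversal, actually give more decay, but boundedness suffices. The overall result is
\[
Ch\bigl(\|F\|_{W^{1,\infty}}+h^{-1/2}\|F\|_\infty\bigr)\leq Ch^{1/2}\|F\|_{W^{1,\infty}}.
\]
An alternative that avoids integration by parts is to bound the overlap volume directly: transversal tubes of width $h^{1/2}$ in dimension $d+1$ intersect in a set where $h^{-d/2}$ times the overlap is $O(h^{1/2})$. I would keep this as a check.

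\emph{Summation and the main obstacle.} The number of passage pairs is bounded by $(J_*L_*/\rho_*)^2$. Pairs of pieces on the same branch with $|r-s|<\rho_*$ are not cross terms; they were handled in Lemma~\ref{lem:beam-gluing}. The main obstacle is uniformity. The distance-dependent bound on the phase gradient must hold with constants independent of the geodesic and of how close the near-return is to a genuine crossing, including grazing near-returns where $d(\gamma(r),\gamma(s))$ is near $\delta_{\mathrm{ret}}$. I would treat these by splitting the support with a smooth partition of unity into a part near the closest approach, where the $\kappa_*$ separation applies, and a far part, where Gaussian decay applies. The split uses a fixed threshold $\delta\ll\delta_{\mathrm{ret}}$, chosen from $\kappa_*$ and $B_2$ only.
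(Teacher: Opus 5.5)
Your integration-by-parts argument follows the paper's proof. The ingredients match: the field $X=\nabla\psi/|\nabla\psi|^2$ with $\psi=\operatorname{Re}(\Theta_j-\Theta_k)$; the lower bound on $|\nabla\psi|$ from the oriented return separation after shrinking the tube radius; an $h$-independent spatial partition with exponentially small remainder; the $h^{-1}(|y_j|+|y_k|)$ loss from differentiating the Gaussian factor, recovered by moments and Cauchy--Schwarz; and a uniformly finite number of nonlocal pairs.

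What is missing is the boundary term. The cross integral is over $M_0$, which has boundary, so the integration by parts also produces
\[
 \frac hi\int_{\partial M_0}F\,b\,h^{-d/2}e^{i\Psi/h}\langle X,\nu\rangle\,dS .
\]
Your accounting (derivative on $F$, on $b$, on the Gaussian) omits it, and your localization does not remove it. A nonlocal return may pass within the tube radius of a segment endpoint, so the overlap region can meet $\partial M_0$. Partition cutoffs remove contributions at chart edges, but not at true segment endpoints.

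The paper controls this term with the endpoint condition $|\langle\dot\gamma,\nu_0\rangle_{g_0}|\geq\kappa_*$ from Definition~\ref{def:beam-regular}, a hypothesis your proposal never uses. By transversality, the restriction of each beam to $\partial M_0$ is a nondegenerate Gaussian in $\dim M_0-1$ variables with normalization $h^{-(\dim M_0-1)/4}$. Hence $\|v_j\|_{L^2(\partial M_0)}\leq C$ uniformly, and Cauchy--Schwarz bounds the boundary term by $Ch\|F\|_{L^\infty}$. With this added, your argument is complete.

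Your overlap-volume check would sidestep the boundary term altogether, since it only estimates $\int|v_j||v_k|$, which can only decrease on restriction to $M_0$. It would even give $\|F\|_{L^\infty}$ in place of $\|F\|_{W^{1,\infty}}$. As stated, however, it presupposes that the tubes cross at an unoriented angle bounded below. Definition~\ref{def:beam-regular} separates only oriented directions and deliberately admits nearly opposite returns. That is why the paper works with the phase: for such returns $\nabla\psi\approx2\alpha\dot\gamma(r)^\flat$ stays large.

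To use the volume route you would have to prove the missing angle bound. One way is geodesic-flow stability over length at most $L_*$: a near-coincident, nearly reversed return would force $\dot\gamma(m)\approx-\dot\gamma(m)$ at the midpoint parameter $m=(r+s)/2$.
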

\begin{proof}
Overlapping arclength charts on one passage have already been combined
in Lemma~\ref{lem:beam-gluing}. Here we consider only different passages.
Choose a fixed arclength mesh $d_*>0$ below the common chart length and
local injectivity scale. At most
$N_*=2+\lceil2(L_*+2\epsilon_*)/d_*\rceil$ charts are needed.
Pairs at parameter distance below the local scale belong to one passage;
nonlocal returns are subject to the separation condition in
Definition~\ref{def:beam-regular}. Consequently we sum at most
$N_*(N_*-1)/2$ pairs, without counting isolated self-intersection points.

For a nonlocal pair put $\psi_{jk}=\operatorname{Re}(\Theta_j-\Theta_k)$.
On overlapping tubes with central points within $\delta_{\rm ret}$,
the central differentials of $\operatorname{Re}\Theta_j$ and
$\operatorname{Re}\Theta_k$ are $\alpha$ times the metric duals of the
corresponding oriented velocities, with the fixed $\alpha>0$ from
\eqref{eq:eikonal-order}. Their difference is therefore bounded below by
Definition~\ref{def:beam-regular}, including when the velocities are nearly
opposite. The uniform $C^2$ phase bounds then give
$|\nabla\psi_{jk}|\geq c_1>0$, after decreasing the common tube radius.
Take a smooth spatial partition supported strictly inside these coordinate
overlaps, with uniformly bounded derivatives. It is independent of $h$;
a finite atlas in position and ray parameter gives a uniform bound on its
number and derivatives. The remaining overlaps are exponentially small.
On each partition element define
\[
 X_{jk}=\frac{\nabla\psi_{jk}}{|\nabla\psi_{jk}|^2},\qquad
 X_{jk}\psi_{jk}=1,\qquad
 \|X_{jk}\|_{C^1}\leq C.
\]
The last bound follows by differentiating the quotient: its denominator
is bounded below by $c_1^2$ and the numerator and its first derivative
are controlled by the common $C^2$ phase bound.

Write the localized product as $e^{i\psi_{jk}/h}A_{jk,h}$, including
the partition and beam cutoffs in $A_{jk,h}$. Bounded attenuation factors
are also included there. If $d_j,d_k$ are transverse distances to the
two central arcs, Gaussian moments and Cauchy--Schwarz give
\[
 \|A_{jk,h}\|_{L^1}\leq C,\qquad
 \|\nabla A_{jk,h}\|_{L^1}\leq Ch^{-1/2}.
\]
To verify the second estimate, differentiation of the damping factor
produces at most $C(d_j+d_k)/h$ times that factor, since
$|\nabla\operatorname{Im}\Theta_j|\leq Cd_j$.
Each Gaussian has $L^2$ norm $O(1)$ and its product with $d_j$
has $L^2$ norm $O(\sqrt h)$. Cauchy--Schwarz therefore gives
$h^{-1}O(\sqrt h)$ in $L^1$. Derivatives of leading amplitudes,
attenuation factors and fixed cutoffs contribute $O(1)$.
This proves the claimed loss by integration, rather than by treating
the effective Gaussian support as a sharp cutoff.

Using $e^{i\psi/h}=(h/i)X_{jk}e^{i\psi/h}$ once yields
\begin{align*}
 \int_{M_0}F A_{jk,h}e^{i\psi/h}\,dV
 ={}&-\frac h i\int_{M_0}e^{i\psi/h}
       \operatorname{div}(F A_{jk,h}X_{jk})\,dV\\
 &+\frac h i\int_{\partial M_0}F A_{jk,h}e^{i\psi/h}
       \langle X_{jk},\nu\rangle\,dS.
\end{align*}
The bulk term is bounded by $Ch^{1/2}\|F\|_{W^{1,\infty}}$.
Only true segment endpoints contribute to the boundary term; chart
edges do not, because of the partition cutoffs. Uniform endpoint
transversality gives $\|v_h^{(j)}\|_{L^2(\partial M_0)}\leq C$:
restriction to the boundary is a nondegenerate transverse Gaussian
in $\dim M_0-1$ variables with normalization
$h^{-(\dim M_0-1)/4}$. Thus the boundary term is $O(h)\|F\|_\infty$.
It is exponentially small if $F$ is supported away from the endpoints.
We obtain
\begin{equation}\label{eq:cross-term-rate}
 \left|\int_{M_0}Fv_h^{(j)}\overline{v_h^{(k)}}\,dV\right|
 \leq Ch^{1/2}\|F\|_{W^{1,\infty}(M_0)}.
\end{equation}
All constants depend only on the previously fixed geometric and phase
bounds. Summing the uniformly finite number of pairs proves the lemma.
\end{proof}

\begin{proof}[Proof of Proposition~\ref{prop:qgb}]
Lemmas~\ref{lem:local-beams}--\ref{lem:beam-cross} give the interior
residual and concentration estimates. At a non-tangential endpoint,
the discrepancy between the transverse Gaussian integral over $M_0$
and the integral over a full normal fiber is bounded by
$C\exp(-c r^2/h)$, where $r$ is distance along the extended ray from
that endpoint. Integrating this bound in $r$ gives $C\sqrt h$.
Uniform endpoint transversality makes these constants uniform; this
also handles boundary terms when passage cutoffs meet the endpoints.
For compactly supported $F$ away from the endpoints these terms are
exponentially small. Finally, the boundary estimate away from the tangential set follows
from the product projection, as follows.
On \(|\partial_\nu x_1|\geq\delta\), the projection
\((x_1,x')\mapsto x'\) restricts to a local diffeomorphism from
\(\partial M\) to \(M_0\).  In the corresponding boundary graph
coordinates, its surface Jacobian is controlled by \(\delta^{-1}\), and
therefore
\[
 \|v_{s_h,\gamma}\circ\pi_0\|_{L^2(\{|\partial_\nu x_1|\geq\delta\})}
 \leq C_\delta\|v_{s_h,\gamma}\|_{L^2(M_0)}\leq C_\delta,
 \qquad \pi_0(x_1,x')=x'.
\]
A finite boundary-graph cover makes the constant uniform.  The estimate is
applied branch by branch near self-intersections, whose number is already
uniformly bounded.
\end{proof}

For $\tau\in\mathbb R$, define the parabolic quasimodes
\begin{equation}\label{eq:parabolic-quasimodes}
 V_{+,h}=e^{-i\tau t}e^{i\eta x_1}v_{s_h,\gamma}(x'),
 \qquad
 V_{-,h}=e^{i\eta x_1}\overline{v_{s_h,\gamma}(x')}.
\end{equation}
The bar is chosen so that $V_{+,h}V_{-,h}$ contains
$|v_{s_h,\gamma}|^2$.  We indicate explicitly how the frequency parameters
are matched.  Since
\[
 \alpha^2h^2s_h^2-\alpha^2
 =2i\alpha^2h\sigma-\alpha^2h^2\sigma^2,
 \qquad \eta=\alpha^2\sigma,
\]
the term \(2i\alpha^2h\sigma\) is cancelled by
\(-2h\partial_{x_1}e^{i\eta x_1}\) in \(\mathcal L_{+,h}\).  More
precisely,
\begin{align}\label{eq:parabolic-residual-expansion}
 \mathcal L_{+,h}V_{+,h}
 &=e^{-i\tau t}e^{i\eta x_1}
 \big[(-h^2\Delta_{g_0}-\alpha^2h^2s_h^2)v_{s_h,\gamma}\notag\\
 &\hspace{37mm}
 +h^2(q-i\tau-\alpha^2\beta^2\sigma^2)v_{s_h,\gamma}\big].
\end{align}
The corresponding identity for \(\mathcal L_{-,h}V_{-,h}\) is obtained by
complex conjugation, with \(s_h\) replaced by \(\overline{s_h}\); the
term \(2h\partial_{x_1}\) then supplies the required opposite
cancellation.  Equations \eqref{eq:beam-residual} and
\eqref{eq:parabolic-residual-expansion} imply
\begin{equation}\label{eq:parabolic-residual}
 \|\mathcal L_{\pm,h}V_{\pm,h}\|_{L^2(Q)}
 \leq C\big(h^{3/2}+h^2(1+|\tau|)\big),
 \qquad
 \|V_{\pm,h}\|_{L^2(Q)}\leq C.
\end{equation}
Furthermore, for $F\in W^{1,\infty}(Q)$ compactly supported after extension
in $x_1$,
\begin{align}\label{eq:space-time-concentration}
 &\left|\int_{\mathbb R}\!\int_0^T\!\int_{M_0}
 F(t,x_1,x')V_{+,h}V_{-,h}\dd V_{g_0}\dd t\dd x_1\right.\notag\\
 &\quad\left.-\int_{\mathbb R}\!\int_0^T\!\int_0^L
 e^{-i\tau t}e^{2i\alpha^2\sigma x_1}e^{-2\alpha\sigma r}
 F(t,x_1,\gamma(r))\dd r\dd t\dd x_1\right|
 \leq Ch^{1/2}\|F\|_{W^{1,\infty}}.
\end{align}

\section{Boundary Carleman estimates and geometric optics solutions}\label{sec:cgo}

We record the boundary Carleman estimate in the form used for solvability.
Its proof is included in \ref{app:carleman} to fix the powers of $h$.

We use the following subsets of the lateral boundary:
\[
 \Sigma_\pm^\circ=\{(t,x)\in\Sigma:\pm\partial_\nu x_1(x)>0\},
 \qquad
 \Sigma_{\pm,\delta_0}
 =\{(t,x)\in\Sigma:\pm\partial_\nu x_1(x)\geq\delta_0\}.
\]

\begin{proposition}[Boundary Carleman estimates]\label{prop:carleman}
There are $C,h_0>0$ such that for $0<h\leq h_0$ the following statements
hold, initially for $z\in C^\infty(\overline Q)$.
If $z|_\Sigma=0$ and $z(0)=0$, then
\begin{align}\label{eq:carleman-forward}
 &h^2\big(\|z\|_{L^2(Q)}^2+\|h\nabla_gz\|_{L^2(Q)}^2\big)
 +h^3\!\int_{\Sigma_+^\circ}
       \partial_\nu x_1|\partial_\nu z|^2\dd S_g\dd t\notag\\
 &\quad\leq
 C\Big(\|h^2e^{-\Phi_h}P_q(e^{\Phi_h}z)\|_{L^2(Q)}^2
 +h^2\|z(T)\|_{L^2(M)}^2
 +h^3\!\int_{\Sigma_-^\circ}
       |\partial_\nu x_1||\partial_\nu z|^2\dd S_g\dd t\Big).
\end{align}
If $z|_\Sigma=0$ and $z(T)=0$, then
\begin{align}\label{eq:carleman-adjoint}
 &h^2\big(\|z\|_{L^2(Q)}^2+\|h\nabla_gz\|_{L^2(Q)}^2\big)
 +h^3\!\int_{\Sigma_-^\circ}
       |\partial_\nu x_1||\partial_\nu z|^2\dd S_g\dd t\notag\\
 &\quad\leq
 C\Big(\|h^2e^{\Phi_h}P_q^t(e^{-\Phi_h}z)\|_{L^2(Q)}^2
 +h^2\|z(0)\|_{L^2(M)}^2
 +h^3\!\int_{\Sigma_+^\circ}
       \partial_\nu x_1|\partial_\nu z|^2\dd S_g\dd t\Big).
\end{align}
The estimates extend to $H^{2,1}(Q)$ functions satisfying the same
homogeneous Dirichlet and indicated temporal condition. Indeed smooth
approximation in this closed subspace follows by flattening the boundary,
odd spatial extension of the zero Dirichlet data, and smoothing; the
zero temporal trace is preserved by zero extension and one-sided time smoothing at that endpoint.
All displayed terms are continuous in $H^{2,1}$ for fixed $h$.
\end{proposition}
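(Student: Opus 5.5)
We prove the boundary Carleman estimates by the standard conjugation, splitting and integration-by-parts argument, carried out on the reduced product metric with the explicit conjugated operators \eqref{eq:conjugated-operators}. We treat \eqref{eq:carleman-forward}; \eqref{eq:carleman-adjoint} follows by time reversal $t\mapsto T-t$, which exchanges $\mathcal L_{+,h}$ and $\mathcal L_{-,h}$ after replacing $x_1$ by $-x_1$. That substitution swaps $\Sigma_+^\circ$ and $\Sigma_-^\circ$ and swaps the roles of $z(0)$ and $z(T)$.

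\emph{Step 1: splitting.} For real-valued $z\in C^\infty(\overline Q)$ with $z|_\Sigma=0$ and $z(0)=0$ (complex $z$ by real and imaginary parts), we first absorb $h^2q$ as a perturbation of size $O(h^2)\|z\|$. We then write $\mathcal L_{+,h}-h^2q=A+B$, where
\[
 A=-h^2\Delta_g-\alpha^2\quad\text{(symmetric part)},\qquad B=h^2\partial_t-2h\partial_{x_1}\quad\text{(antisymmetric part)},
\]
and expand
\[
 \|(A+B)z\|^2=\|Az\|^2+\|Bz\|^2+2(Az,Bz).
\]

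\emph{Step 2: the cross term.} Since $x_1$ is a limiting Carleman weight for the product metric, $\partial_{x_1}$ is Killing, so $[A,\partial_{x_1}]=0$. Likewise $[A,\partial_t]=0$. Hence $2(Az,Bz)$ reduces entirely to boundary terms.
\begin{itemize}
 \item The $\partial_t$ part gives $h^4\|h\nabla z(T)\|^2/h^2$-type terms at $t=T$ with a favorable sign, plus an $\alpha^2 h^2\|z(T)\|^2$ term. The latter is controlled by the $h^2\|z(T)\|^2$ on the right-hand side; the $t=0$ terms vanish.
 \item The $\partial_{x_1}$ part gives, using $z|_\Sigma=0$ so that $\nabla z=\partial_\nu z\,\nu$, the lateral term
 \[
 2h^3\int_\Sigma \partial_\nu x_1\,|\partial_\nu z|^2\dd S_g\dd t
 \]
 with the sign that produces $\Sigma_+^\circ$ on the left and $\Sigma_-^\circ$ on the right of \eqref{eq:carleman-forward}.
\end{itemize}

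\emph{Step 3: interior positivity.} Since the weight is only limiting, we gain positivity by convexification: replace $\phi_h$ by $\phi_h+\frac{h}{2\varepsilon}x_1^2$ with a small fixed $\varepsilon$, or equivalently use the parabolic choice $\beta>1/\sqrt3$ already built into $\alpha$. This gives
\[
 \|Az\|^2+\|Bz\|^2\gtrsim \frac{h^2}{\varepsilon}\big(\|z\|^2+\|h\nabla z\|^2\big).
\]
To recover the $h\nabla z$ term we use $(Az,z)=\|h\nabla z\|^2-\alpha^2\|z\|^2$. Because $x_1$ is bounded on $M$, the convexified weight differs from $\phi_h$ by a bounded factor $e^{O(1/\varepsilon)}$, which we remove at the end. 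The $h^2q$ perturbation is absorbed for small $h$.

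This step is the main obstacle: obtaining $h^2$ rather than $h^4$ lower bounds while keeping the boundary terms with exactly $h^3$. We would try first the convexification approach of Kenig–Sjöstrand–Uhlmann, checking that the extra convexification contributions on $\Sigma$ retain the sign of $\partial_\nu x_1$ up to $O(h)$ corrections absorbable by the boundary terms.

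\emph{Step 4: extension.} The extension to $H^{2,1}$ follows by the density argument already indicated in the statement, since all terms are continuous in $H^{2,1}$ for fixed $h$.
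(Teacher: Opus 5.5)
Your Steps 1, 2 and 4 are sound. For the unconvexified splitting the cross term is exactly $2(Az,Bz)=h^4\|\nabla_g z(T)\|^2-\alpha^2h^2\|z(T)\|^2+2h^3\int_\Sigma\partial_\nu x_1|\partial_\nu z|^2$, and the reflection $s=T-t$, $y_1=-x_1$ is also how the paper obtains \eqref{eq:carleman-adjoint}.

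\textbf{The gap is Step 3.} It carries the whole interior estimate, and you assert it rather than prove it. The bound $\|Az\|^2+\|Bz\|^2\gtrsim h^2(\|z\|^2+\|h\nabla_gz\|^2)$ is not what convexification delivers. Convexifying replaces $A$ and $B$ by operators with $x_1$-dependent coefficients, so your purely boundary computation in Step 2 no longer applies. In that argument the gain comes from the \emph{interior} part of the new cross term, and the two squares are simply discarded. Nor is $\beta>1/\sqrt3$ an equivalent alternative: in your splitting $\beta$ affects the cross term only through $-\alpha^2h^2\|z(T)\|^2$, so it cannot create interior positivity.

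The convexification sign is also wrong. With the conjugation $e^{-\Phi_h}(\cdot)e^{\Phi_h}$ used here, the convexified weight must be concave in $x_1$. With your $\phi_h+\frac{h}{2\varepsilon}x_1^2$, pairing $-2h(1+\frac h\varepsilon x_1)\partial_{x_1}v$ with the zeroth-order coefficient gives $-(3-\beta^2)\frac{h^2}{\varepsilon}\|v\|^2+O(h^3)\|v\|^2$. The Laplacian pairing gives the indefinite gradient term $\frac{h^4}{\varepsilon}(\|\nabla_gv\|^2-2\|\partial_{x_1}v\|^2)$.

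\textbf{The paper's route.} It closes Step 3 with $\phi_{h,\delta}=\phi_h-\frac{h}{2\delta}x_1^2$ and the splitting $\mathcal P_1=h^2\partial_t-2h(1-\frac h\delta x_1)\partial_{x_1}+\frac{4h^2}{\delta}$, $\mathcal P_2=-h^2\Delta_g+K(x_1)$, then computes $(\mathcal P_1v,\mathcal P_2v)$ term by term.
\begin{itemize}
\item The extra $4h^2/\delta$ makes the gradient part equal to $\frac{3h^4}{\delta}\|\nabla_gv\|^2+\frac{2h^4}{\delta}\|\partial_{x_1}v\|^2$.
\item The $\|v\|^2$ coefficient becomes $(3\beta^2-1)h^2/\delta$. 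This is where $\beta>1/\sqrt3$ is used, jointly with the convexification and not instead of it.
\item The lateral term only acquires the factor $1-hx_1/\delta$, so its sign is preserved.
\item Setting $z=e^{-x_1^2/(2\delta)}v$ undoes the convexification.
\end{itemize}

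\textbf{A direct completion of your own route.} Your unconvexified Step 2 can also be finished without any convexification, and without any condition on $\beta$.
\begin{itemize}
\item Write $B=h(h\partial_t-2\partial_{x_1})$. For fixed $x'$, every integral line of $h\partial_t-2\partial_{x_1}$ in the $(t,x_1)$-plane enters $\overline Q$ through $\{t=0\}$ or through $\Sigma$, where $z=0$. Its $x_1$-extent is bounded by that of $M$.
\item A one-dimensional Poincar\'e inequality along these lines gives $\|Bz\|\geq ch\|z\|$. The change of variables has constant Jacobian and $dV_g=dx_1\,dV_{g_0}$, so the fiberwise bounds integrate.
\item Your identity $(Az,z)=\|h\nabla_gz\|^2-\alpha^2\|z\|^2$ then yields $h^2\|h\nabla_gz\|^2\leq\frac12\|Az\|^2+Ch^2\|z\|^2$.
\end{itemize}
Together these give exactly the lower bound you need. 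As written, however, Step 3 is only a plan, not an argument.
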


\begin{proposition}[Solvability with partial boundary data]
\label{prop:solvability}
For $0<h\leq h_0$ the following assertions hold.
\begin{enumerate}[label=\textup{(\roman*)}]
\item Given $F\in L^2(Q)$ and $f_-\in L^2(\Sigma_{-,\delta_0})$, there is
      $R_+\in L^2(Q)$ such that $e^{\Phi_h}R_+\in\Hc_+(Q)$,
\[
 e^{-\Phi_h}P_q(e^{\Phi_h}R_+)=F,
 \qquad R_+|_{\Sigma_{-,\delta_0}}=f_-,
\]
and
\begin{equation}\label{eq:solvability-plus}
 \|R_+\|_{L^2(Q)}
 \leq C\big(h\|F\|_{L^2(Q)}+h^{1/2}\|f_-\|_{L^2}\big).
\end{equation}
\item Given $F\in L^2(Q)$ and $f_+\in L^2(\Sigma_{+,\delta_0})$, there is
      $R_-\in L^2(Q)$ such that $e^{-\Phi_h}R_-\in\Hc_-(Q)$,
\[
 e^{\Phi_h}P_q^t(e^{-\Phi_h}R_-)=F,
 \qquad R_-|_{\Sigma_{+,\delta_0}}=f_+,
\]
and
\begin{equation}\label{eq:solvability-minus}
 \|R_-\|_{L^2(Q)}
 \leq C\big(h\|F\|_{L^2(Q)}+h^{1/2}\|f_+\|_{L^2}\big).
\end{equation}
\end{enumerate}
Put $B_\pm=h\|F\|_{L^2(Q)}+h^{1/2}\|f_\mp\|_{L^2}$,
$W_\pm=e^{\pm\Phi_h}R_\pm$, and $g_\pm=e^{\mp\Phi_h}\tau_0W_\pm$.
The solutions above can be chosen so that
\begin{align}\label{eq:explicit-strict-traces}
 \|g_+\|_{L^2(\Sigma_-^\circ)}&=\|f_-\|_{L^2},&
 \|g_+\|_{L^2(\Sigma_+^\circ)}&\leq Ch^{-1/2}B_+,\notag\\
 \|g_-\|_{L^2(\Sigma_+^\circ)}&=\|f_+\|_{L^2},&
 \|g_-\|_{L^2(\Sigma_-^\circ)}&\leq Ch^{-1/2}B_-.
\end{align}
The prescribed data are zero-extended within their same-sign sides.
Thus the opposite-side bound is explicitly
$C(h^{1/2}\|F\|+\|f_\mp\|)$.
The full traces of $W_\pm$ belong to $H^{-1/2,-1/4}(\Sigma)$;
removing conjugation in the displayed estimates on the positive and negative boundary parts costs at most
$e^{C/h^2}$. No assertion across the glancing set is needed.
Here \(R_+|_{\Sigma_{-,\delta_0}}=f_-\) means
\(e^{-\Phi_h}\tau_0(e^{\Phi_h}R_+)=f_-\), and the analogous convention is
used for \(R_-\).
\end{proposition}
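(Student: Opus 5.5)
The plan is the standard Hilbert-space duality (Riesz/Hahn--Banach) argument built on the boundary Carleman estimates of Proposition~\ref{prop:carleman}. We treat (i); part (ii) follows from (i) by time reversal $t\mapsto T-t$, which exchanges $P_q$ and $P_q^t$ and the two temporal endpoints. The sign change of $\partial_{x_1}$ exchanges the roles of $\Sigma_\pm$, and \eqref{eq:carleman-adjoint} is the corresponding estimate.

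For (i), consider the subspace
\[
 \mathcal S=\{(\mathcal L_{-,h}z,\ \partial_\nu z|_{\Sigma_-^\circ},\ z(0)) : z\in H^{2,1}(Q),\ z|_\Sigma=0,\ z(T)=0\}
\]
of $L^2(Q)\times L^2(\Sigma_-^\circ,|\partial_\nu x_1|)\times L^2(M)$, where $\mathcal L_{-,h}=h^2e^{\Phi_h}P_q^te^{-\Phi_h}$. Define on $\mathcal S$ the linear functional
\[
 \ell(\mathcal L_{-,h}z,\dots)=h^2(z,F)_{L^2(Q)}-h^2\langle \partial_\nu z,f_-\rangle_{\Sigma_{-,\delta_0}}.
\]
The powers of $h$ are bookkept from the scaling $\mathcal L_{+,h}=h^2e^{-\Phi_h}P_qe^{\Phi_h}$. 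By \eqref{eq:carleman-adjoint}, $z$ is determined by its image in $\mathcal S$, so $\ell$ is well defined. Moreover, $h\|z\|+h^{3/2}\|\partial_\nu z\|_{L^2(\Sigma_-^\circ,|\partial_\nu x_1|)}$ is controlled by the norm of the image, with weights $h^{-1}$ for $z(0)$ and $h^{-3/2}$ for the $\Sigma_+^\circ$ normal trace placed in the norm. On $\Sigma_{-,\delta_0}$ the weight $|\partial_\nu x_1|\ge\delta_0$, so this yields
\[
 |\ell|\le C\big(h\|F\|+h^{1/2}\|f_-\|\big)\,\|\text{image}\|.
\]
Hahn--Banach plus Riesz then produce $R_+\in L^2(Q)$, $r_T\in L^2(M)$ and $g\in L^2(\Sigma_+^\circ)$ representing $\ell$, with norms bounded by $B_+$ in the corresponding weighted norms. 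This gives \eqref{eq:solvability-plus}, and after unwinding weights it gives the trace bound $\|g_+\|_{L^2(\Sigma_+^\circ)}\le Ch^{-1/2}B_+$.

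Testing the representation against $z\in C_c^\infty(Q)$ gives $\mathcal L_{+,h}R_+=h^2F$ in distributions, i.e. $e^{-\Phi_h}P_q(e^{\Phi_h}R_+)=F$. Hence $W_+=e^{\Phi_h}R_+\in\Hc_+(Q)$, since the weight is smooth and bounded on $\overline Q$ for fixed $h$. Next, compare the representation identity with the generalized Green formula \eqref{eq:general-green} of Proposition~\ref{prop:graph-traces}, conjugated by $e^{\Phi_h}$. Lemma~\ref{lem:normal-density} lets the normal traces of $z$ on $\Sigma_-^\circ$ and $\Sigma_+^\circ$ be chosen independently and densely. This identifies the conjugated Dirichlet trace $g_+$ with $f_-$ on $\Sigma_{-,\delta_0}$ (zero-extended on the rest of $\Sigma_-^\circ$, giving the equality of norms) and with the Riesz component on $\Sigma_+^\circ$. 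It also identifies $r_0W_+$ with the $L^2(M)$ component.

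Finally, $\tau_0W_+\in H^{-1/2,-1/4}(\Sigma)$ by Proposition~\ref{prop:graph-traces}. Removing $e^{\mp\Phi_h}$ costs $\sup_{\overline Q}e^{|\Phi_h|}\le e^{C/h^2}$, because $|\Phi_h|\le C/h+\beta^2T/h^2$.

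The main obstacle is the trace identification on the glancing set. There the Carleman weights $|\partial_\nu x_1|$ degenerate, so the $L^2$ Riesz components only make sense on the strict-sign parts. I would avoid any claim across $\partial M_{\rm tan}$: test only with $z$ whose normal traces are supported in $\Sigma_\pm^\circ$. That identifies $\tau_0W_+$ on $\Sigma^\circ_\pm$. The global membership in $H^{-1/2,-1/4}$ comes from the graph-space trace theory rather than from the weighted bounds, which is exactly what the statement asserts.
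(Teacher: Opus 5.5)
Your proposal is correct and is essentially the paper's argument: Hahn--Banach/Riesz duality on the range of $z\mapsto(\mathcal L_{-,h}z,\,|\partial_\nu x_1|^{1/2}\partial_\nu z|_{\Sigma_+^\circ})$, controlled by \eqref{eq:carleman-adjoint}, followed by the generalized Green formula and Lemma~\ref{lem:normal-density} to identify $g_+$ on the strict-sign parts. The paper differs only in taking test functions with $z(0)=z(T)=0$, so no $L^2(M)$ component is needed, and in proving (ii) directly from \eqref{eq:carleman-forward} rather than through the $t\mapsto T-t$, $x_1\mapsto-x_1$ symmetry. Two slips need fixing: the set $\mathcal S$ must record $\partial_\nu z|_{\Sigma_+^\circ}$ (the right-hand boundary term of \eqref{eq:carleman-adjoint}), not $\partial_\nu z|_{\Sigma_-^\circ}$; and $h^{-1},h^{-3/2}$ are the dual weights on the representing components, while the image norm itself carries the weights $h$ and $h^{3/2}$.
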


\begin{proof}
We prove (i).  Since the potential and the weight are real, we first take
real \(F,f_-\), use the real Hilbert-space Riesz theorem, and then solve the
real and imaginary parts separately.  This yields the complex bilinear
formulas displayed below without a conjugation mismatch.  Let
\[
 \mathcal A_h=e^{\Phi_h}P_q^te^{-\Phi_h},
 \qquad
 \mathcal T=\{z\in C^\infty(\overline Q):z|_\Sigma=0, z(0)=z(T)=0\}.
\]
Divide \eqref{eq:carleman-adjoint} by $h^4$ and use the strict sign on
$\Sigma_{-,\delta_0}$.  For $z\in\mathcal T$ this gives
\begin{equation}\label{eq:duality-carleman}
 h^{-1}\|z\|_{L^2(Q)}+h^{-1/2}
 \|\partial_\nu z\|_{L^2(\Sigma_{-,\delta_0})}
 \leq C\left(\|\mathcal A_hz\|_{L^2(Q)}
 +h^{-1/2}\||\partial_\nu x_1|^{1/2}\partial_\nu z\|_{L^2(\Sigma_+^\circ)}
 \right).
\end{equation}
Let
\[
 \mathcal Z_hz=\left(\mathcal A_hz,
 |\partial_\nu x_1|^{1/2}\partial_\nu z|_{\Sigma_+^\circ}\right)
\]
as a map into the Hilbert space
\[
 \mathscr X_h=L^2(Q)\times L^2(\Sigma_+^\circ),
 \qquad
 \|(G,g)\|_{\mathscr X_h}^2=\|G\|_{L^2(Q)}^2+h^{-1}\|g\|_{L^2}^2.
\]
On the range of \(\mathcal Z_h\), define
\[
 \mathscr L(\mathcal Z_hz)
 =(F,z)_{L^2(Q)}-(f_-,\partial_\nu z)_{L^2(\Sigma_{-,\delta_0})}.
\]
If \(\mathcal Z_hz=0\), estimate \eqref{eq:duality-carleman} implies that
\(z=0\) and \(\partial_\nu z=0\) on \(\Sigma_{-,\delta_0}\), so the
functional is well-defined.  Moreover,
\begin{equation}\label{eq:functional-bound}
 |\mathscr L(\mathcal Z_hz)|
 \leq C\big(h\|F\|_{L^2(Q)}+h^{1/2}\|f_-\|_{L^2}\big)
       \|\mathcal Z_hz\|_{\mathscr X_h}.
\end{equation}
Hahn--Banach and the Riesz theorem give
\((R_+,S_+)\in\mathscr X_h\) representing the extension of
\(\mathscr L\), with
\begin{equation}\label{eq:riesz-components}
 \|R_+\|_{L^2(Q)}+h^{-1/2}\|S_+\|_{L^2}
 \leq C\big(h\|F\|_{L^2(Q)}+h^{1/2}\|f_-\|_{L^2}\big).
\end{equation}
In particular, for every \(z\in\mathcal T\),
\begin{align}\label{eq:riesz-identity}
 &(R_+,\mathcal A_hz)_{L^2(Q)}
 +h^{-1}(S_+,|\partial_\nu x_1|^{1/2}\partial_\nu z)_{L^2(\Sigma_+^\circ)}
 \notag\\
 &\hspace{35mm}=(F,z)_{L^2(Q)}
 -(f_-,\partial_\nu z)_{L^2(\Sigma_{-,\delta_0})}.
\end{align}

Taking \(z\in C_c^\infty(Q)\) in \eqref{eq:riesz-identity} shows that
\(e^{-\Phi_h}P_q(e^{\Phi_h}R_+)=F\) in distributions.  Thus
\(W_+=e^{\Phi_h}R_+\in\Hc_+(Q)\).  Apply the generalized Green identity to
\(W_+\) and \(e^{-\Phi_h}z\).  Comparison with
\eqref{eq:riesz-identity}, followed by
Lemma~\ref{lem:normal-density}, identifies the restrictions of
\(e^{-\Phi_h}\tau_0W_+\) to the two strict-sign open portions with
\(L^2\) functions; the full graph trace is still understood in
\(H^{-1/2,-1/4}(\Sigma)\).  Extend \(f_-\) by zero to the whole set
\(\Sigma_-^\circ\).  Independent variation of the normal traces
on the negative and positive sides gives
\[
 e^{-\Phi_h}\tau_0W_+=f_-
 \quad\text{on }\Sigma_-^\circ,
 \qquad
 e^{-\Phi_h}\tau_0W_+
 =h^{-1}|\partial_\nu x_1|^{1/2}S_+
 \quad\text{on }\Sigma_+^\circ.
\]
For precision, the comparison gives an equality of distributions tested
against every $\eta\in C_c^\infty(\Sigma_\pm^\circ)$: take a smooth
zero-Dirichlet lifting with normal trace $\eta$ and zero temporal traces.
It therefore identifies the graph trace with the displayed $L^2$
representative, rather than merely a functional on the range of
$\mathcal Z_h$. Density then gives equality on all $L^2$ tests on
each strict side. The factor $h^{-1}$ is the Riesz weight in
$\mathscr X_h$, while $\|S_+\|\leq Ch^{1/2}B_+$; together these give
the power $h^{-1/2}$ in \eqref{eq:explicit-strict-traces}.

In particular, the conjugated trace of \(R_+\) equals the prescribed
\(f_-\) on \(\Sigma_{-,\delta_0}\) and vanishes on the remaining
negative side.  The second formula and \eqref{eq:riesz-components} yield
\begin{equation}\label{eq:opposite-trace-bound}
 \|e^{-\Phi_h}\tau_0W_+\|_{L^2(\Sigma_+^\circ)}
 \leq Ch^{-1/2}\big(h\|F\|_{L^2}+h^{1/2}\|f_-\|_{L^2}\big).
\end{equation}
Estimate \eqref{eq:solvability-plus} follows from
\eqref{eq:riesz-components}.  Multiplication by \(e^{\Phi_h}\) costs at
most \(e^{C/h^2}\) on \(\overline Q\), which proves the asserted
unconjugated trace bound with only the displayed fixed polynomial loss.

For (ii), use the forward estimate \eqref{eq:carleman-forward} with
\(e^{-\Phi_h}P_qe^{\Phi_h}\) in place of \(\mathcal A_h\).  The favorable
boundary term is now on \(\Sigma_+^\circ\), so the prescribed trace
lies on \(\Sigma_{+,\delta_0}\).  The same argument proves
\eqref{eq:solvability-minus} and the stated trace bounds.
\end{proof}

\begin{proposition}[Geometric optics solutions with uniform estimates]\label{prop:cgo}
Let $q\in\Q_m$, $\gamma\in\G$, $|\sigma|\leq\sigma_0$, and
$|\tau|\leq R$.  There exist exact solutions
\begin{align}\label{eq:cgo-solutions}
 u_{+,h}&=e^{\Phi_h}(V_{+,h}+R_{+,h}),
 &P_qu_{+,h}&=0,&\supp(\tau_0u_{+,h})&\subset\U,\\
 u_{-,h}&=e^{-\Phi_h}(V_{-,h}+R_{-,h}),
 &P_q^tu_{-,h}&=0,&\supp(\tau_0u_{-,h})&\subset\V.
\end{align}
Uniformly in these parameters,
\begin{equation}\label{eq:cgo-remainder}
 \|R_{+,h}\|_{L^2(Q)}+\|R_{-,h}\|_{L^2(Q)}
 \leq C\big(h^{1/2}+h(1+R)\big).
\end{equation}
The two trace estimates are separately
\begin{align}\label{eq:individual-traces}
 \|(r_0u_{+,h},\tau_0u_{+,h})\|_{\X_\U}
 &\leq C(1+R)e^{C/h^2},\\
 \|r_Tu_{-,h}\|_{H^{-1}(M)}+
 \|\tau_0u_{-,h}\|_{\widetilde H^{-1/2,-1/4}(\V)}
 &\leq C(1+R)e^{C/h^2}.\notag
\end{align}
In particular, with $N=2$,
\begin{align}\label{eq:trace-growth}
 &\big\|(r_0u_{+,h},\tau_0u_{+,h})\big\|_{\X_\U}
 \left(\|r_Tu_{-,h}\|_{H^{-1}(M)}
 +\|\tau_0u_{-,h}\|_{\widetilde H^{-1/2,-1/4}(\V)}\right)\notag\\
 &\hspace{35mm}\leq C(1+R)^Ne^{C/h^2}.
\end{align}
\end{proposition}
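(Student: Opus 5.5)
We construct $u_{+,h}$ by correcting the parabolic quasimode $V_{+,h}$ of \eqref{eq:parabolic-quasimodes} with Proposition~\ref{prop:solvability}(i). We choose the data so that the Dirichlet trace vanishes off $\U$; the case of $u_{-,h}$ is symmetric, using part (ii) and $\V$. The equation $P_qu_{+,h}=0$ is equivalent to
\[
 e^{-\Phi_h}P_q(e^{\Phi_h}R_{+,h})=-h^{-2}\mathcal L_{+,h}V_{+,h}=:F_h .
\]
By \eqref{eq:parabolic-residual},
\[
 \|F_h\|_{L^2(Q)}\leq C\big(h^{-1/2}+1+R\big).
\]
For the boundary condition we use \eqref{eq:strict-sign}. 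Since $\partial M\setminus\Gamma_{\mathrm D}\subset\{\partial_\nu x_1\leq-\delta_0\}$, the complement of $\U$ in $\Sigma$ lies in $\Sigma_{-,\delta_0}$. We therefore prescribe $f_-=-V_{+,h}|_{\Sigma_{-,\delta_0}}\,\mathbf 1_{\Sigma\setminus\U}$, or a smooth cutoff version of it supported in $\Sigma_{-,\delta_0}$ and equal to $-V_{+,h}$ off $\U$. The conjugated trace of $V_{+,h}+R_{+,h}$ then vanishes on $\Sigma\setminus\overline\U$, which gives $\supp\tau_0u_{+,h}\subset\overline\U$. Shrinking $\Gamma_{\mathrm D}$ slightly beforehand, as permitted in the introduction, yields compact support inside the original set.

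The remainder bound needs $\|f_-\|_{L^2(\Sigma_{-,\delta_0})}\leq C$ uniformly. This follows from the last statement of Proposition~\ref{prop:qgb}: on $|\partial_\nu x_1|\geq\delta_0$ the beam has uniformly bounded boundary $L^2$ norm. The factors $e^{-i\tau t}e^{i\eta x_1}$ are bounded because $\tau$ is real and $|\eta|\leq\alpha^2\sigma_0$ with $x_1$ bounded. Then \eqref{eq:solvability-plus} gives
\[
 \|R_{+,h}\|_{L^2(Q)}\leq C\big(h\cdot(h^{-1/2}+1+R)+h^{1/2}\big)\leq C\big(h^{1/2}+h(1+R)\big),
\]
which is \eqref{eq:cgo-remainder}. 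All constants come from Propositions~\ref{prop:qgb} and \ref{prop:solvability}, so they are uniform in $(\gamma,\sigma,\tau)$.

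For the trace bounds \eqref{eq:individual-traces}, we first bound $u_{+,h}$ in $\Hc_+(Q)$. We have $\|u_{+,h}\|_{L^2}\leq e^{C/h^2}(C+\|R_{+,h}\|)$, with $P_0u_{+,h}=-qu_{+,h}$ in $L^2$. Proposition~\ref{prop:graph-traces} then bounds $r_0u_{+,h}$ in $H^{-1}(M)$ and $\tau_0u_{+,h}$ in $H^{-1/2,-1/4}(\Sigma)$ by $C(1+R)e^{C/h^2}$. Because the trace is supported in $\overline\U$, it lies in $\D_\U$ with the inherited norm, which gives the $\X_\U$ bound. The backward solution is handled by the same argument with $r_T$ and $\widetilde H^{-1/2,-1/4}(\V)$. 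Multiplying the two bounds gives \eqref{eq:trace-growth} with $N=2$.

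The main obstacle is making the support statement rigorous while keeping uniform $L^2$ boundary data. Two issues arise. First, $V_{+,h}$ near the glancing set is not controlled by Proposition~\ref{prop:qgb}. This is harmless because the glancing set lies inside $\Gamma_{\mathrm D}$, where no condition is imposed. Second, the graph trace is only identified as an $L^2$ function on the strict-sign portions by \eqref{eq:explicit-strict-traces}. We handle this by adding $\tau_0V_{+,h}$, a smooth function, to the identity $e^{-\Phi_h}\tau_0W_+=f_-$ on $\Sigma_-^\circ$. Since $\Sigma\setminus\U\subset\Sigma_{-,\delta_0}\subset\Sigma_-^\circ$, the distributional trace of $u_{+,h}$ then vanishes on the open set $\Sigma\setminus\overline\U$.
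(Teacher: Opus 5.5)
Your proposal is correct and follows essentially the same route as the paper: you correct $V_{\pm,h}$ using Proposition~\ref{prop:solvability} with source $-h^{-2}\mathcal L_{\pm,h}V_{\pm,h}$ and boundary datum $-V_{\pm,h}$ off the input/observation set, use the uniform boundary $L^2$ bound of the beams on the strict-sign part for the remainder estimate, and bound the traces through the graph norm $\leq Ce^{C/h^2}(1+R)$. The only cosmetic difference is that the paper fixes an intermediate neighborhood $\partial M_+\Subset\Gamma_{\mathrm D}'\Subset\Gamma_{\mathrm D}$ (decreasing $\delta_0$) at the start, which yields support in $(0,T)\times\overline{\Gamma_{\mathrm D}'}\subset\Sigma_{\mathrm D}$ directly, whereas you use the indicator of $\Sigma\setminus\Sigma_{\mathrm D}$ and shrink afterwards.
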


\begin{proof}
Choose open neighborhoods \(\Gamma_{\mathrm D}'\), \(\Gamma_{\mathrm N}'\) such that
\[
 \partial M_+\Subset \Gamma_{\mathrm D}'\Subset \Gamma_{\mathrm D},
 \qquad \partial M_-\Subset \Gamma_{\mathrm N}'\Subset \Gamma_{\mathrm N}.
\]
After decreasing \(\delta_0\), if necessary,
\[
 \Sigma\setminus((0,T)\times \Gamma_{\mathrm D}')\subset\Sigma_{-,\delta_0},
 \qquad
 \Sigma\setminus((0,T)\times \Gamma_{\mathrm N}')\subset\Sigma_{+,\delta_0}.
\]
For the growing solution, prescribe on \(\Sigma_{-,\delta_0}\) the
piecewise \(L^2\) trace
\[
 f_{+,h}=\begin{cases}
 -V_{+,h},&\text{on }\Sigma\setminus((0,T)\times \Gamma_{\mathrm D}'),\\
 0,&\text{on }\Sigma_{-,\delta_0}\cap((0,T)\times \Gamma_{\mathrm D}').
 \end{cases}
\]
By Proposition~\ref{prop:qgb}, this trace is uniformly bounded in $L^2$.  The
source for Proposition~\ref{prop:solvability} is
\[
 -e^{-\Phi_h}P_q(e^{\Phi_h}V_{+,h})=-h^{-2}\mathcal L_{+,h}V_{+,h}.
\]
Apply part (i) of Proposition~\ref{prop:solvability} with this trace.  The
correction cancels \(V_{+,h}\) outside \((0,T)\times \Gamma_{\mathrm D}'\), so the total
Dirichlet trace is supported in $(0,T)\times\overline{\Gamma_{\mathrm D}'}\subset\U$
as a distribution on $\Sigma$. Indeed every smooth test supported outside
that closed lateral set lies in the strict negative side, where the
identified $L^2$ representative is zero. The glancing set is inside $\Gamma_{\mathrm D}'$;
it therefore creates no unexamined boundary portion in this support claim.  Moreover,
\eqref{eq:parabolic-residual} and \eqref{eq:solvability-plus} give
\[
 \|R_{+,h}\|_{L^2}
 \leq C\left[h\big(h^{-1/2}+1+R\big)+h^{1/2}\right].
\]
For the adjoint solution, prescribe \(-V_{-,h}\) on
\(\Sigma\setminus((0,T)\times \Gamma_{\mathrm N}')\) and zero on the remaining part of
\(\Sigma_{+,\delta_0}\).  Part (ii) of
Proposition~\ref{prop:solvability} proves the support condition and the same
remainder estimate.  This gives \eqref{eq:cgo-remainder}.

It remains to track the generalized traces.  Since the exact solutions satisfy
\(P_qu_{+,h}=0\) and \(P_q^tu_{-,h}=0\), their graph norms are controlled by
their \(L^2(Q)\) norms.  Hence
\[
 \|u_{\pm,h}\|_{\Hc_\pm(Q)}
 \leq Ce^{C/h^2}\big(1+h^{1/2}+h(1+R)\big).
\]
The graph trace bounds \eqref{eq:graph-traces} control the temporal and
lateral traces appearing in \eqref{eq:trace-growth}.  The \(L^2\) representatives supplied by Proposition~\ref{prop:solvability}
implement the prescribed cancellation outside \(\U\) and \(\V\); the full
lateral traces remain in the negative anisotropic Sobolev space.  For the forward input the product norm is bounded by the graph norm
of $u_{+,h}$ itself. For the transpose trace the supported-space norm
is its full global negative norm, controlled by the graph trace theorem.
Since $h\leq1$, the common factor $1+h^{1/2}+h(1+R)$ is at most
$C(1+R)$. This proves \eqref{eq:individual-traces}; multiplication
gives \eqref{eq:trace-growth} with $N=2$, without an unspecified
polynomial loss in $h$.
\end{proof}

\begin{remark}
No cutoff forcing the growing solution to vanish at $t=0$ or the decaying
solution to vanish at $t=T$ is used.  Those traces are respectively an input
and a tested output in the present input--output map.  This is why the
$h^{1/2}$ scale in \eqref{eq:cgo-remainder} replaces the weaker cutoff scale
appearing in the construction using a partial Dirichlet-to-Neumann map of \cite{ChoulliKian2018}.
\end{remark}

\section{Integral identity and estimate for the attenuated ray transform}\label{sec:ray}

Let $p=q_2-q_1$ and let $u_{+,h}$ be the growing solution for $q_1$.
Let $v_h$ solve the equation with potential $q_2$ and with the same initial
and Dirichlet data as $u_{+,h}$.  Set $w_h=u_{+,h}-v_h$.  Then
\begin{equation}\label{eq:w-equation}
 P_{q_2}w_h=pu_{+,h},
 \qquad r_0w_h=0,\qquad \tau_0w_h=0.
\end{equation}
The definition of the partial maps gives
\begin{align}\label{eq:output-difference}
 r_Tw_h&=\big(\Lambda^{\U,\V}_{g,q_1}
             -\Lambda^{\U,\V}_{g,q_2}\big)_1
             (r_0u_{+,h},\tau_0u_{+,h}),\notag\\
 \tau_1w_h|_\V&=\big(\Lambda^{\U,\V}_{g,q_1}
             -\Lambda^{\U,\V}_{g,q_2}\big)_2
             (r_0u_{+,h},\tau_0u_{+,h}).
\end{align}
By Proposition~\ref{prop:difference-map}, $w_h\in H^{2,1}(Q)$, so
\(r_Tw_h\in H_0^1(M)\) and
\(\tau_1w_h\in H^{1/2,1/4}(\Sigma)\).  Take the decaying transpose
solution $u_{-,h}$ for $q_2$.  To retain the full boundary information, apply
\eqref{eq:general-green} with first factor \(w_h\) and second factor
\(u_{-,h}\).  Its left-hand side is
\[
 (P_{q_2}w_h,u_{-,h})_{L^2(Q)}
 -(w_h,P_{q_2}^tu_{-,h})_{L^2(Q)}
 =\int_Qp\,u_{+,h}u_{-,h}\dd V_g\dd t.
\]
The complete boundary expression is
\begin{align*}
 &\langle r_Tw_h,r_Tu_{-,h}\rangle_M
 -\langle r_0w_h,r_0u_{-,h}\rangle_M\\
 &\qquad+\langle\tau_0w_h,\tau_1u_{-,h}\rangle_\Sigma
 -\langle\tau_1w_h,\tau_0u_{-,h}\rangle_\Sigma.
\end{align*}
The second and third terms vanish by \eqref{eq:w-equation}.  The first
lateral factor in the last term is a positive-order trace, while the second
is a generalized negative-order trace, so the pairing is well-defined.
Approximating the graph-space solution \(u_{-,h}\) and using
Proposition~\ref{prop:graph-traces} justifies the identity without any
additional smoothness.  We obtain
\begin{align}\label{eq:quantitative-identity}
 \int_Q p\,u_{+,h}u_{-,h}\dd V_g\dd t
 ={}&\ip{r_Tw_h}{r_Tu_{-,h}}_{H_0^1,H^{-1}}\notag\\
 &-\ip{\tau_1w_h|_\V}{\tau_0u_{-,h}}_
 {H^{1/2,1/4}(\V),\widetilde H^{-1/2,-1/4}(\V)}.
\end{align}
The initial term vanishes because \(r_0w_h=0\), and the Dirichlet trace of
\(w_h\) vanishes on all of \(\Sigma\).  The remaining lateral pairing may be
localized by Lemma~\ref{lem:local-duality}: the second factor remains
an ambient distribution supported in $\overline\V$, and is not replaced
by its negative-order restriction.  If
\begin{equation}\label{eq:epsilon}
 \eps=\|\Lambda^{\U,\V}_{g,q_1}-\Lambda^{\U,\V}_{g,q_2}\|_*,
\end{equation}
then \eqref{eq:output-difference}, the duality bounds, and
\eqref{eq:trace-growth} yield, more explicitly,
\begin{align*}
 \left|\int_Qp\,u_{+,h}u_{-,h}\dd V_g\dd t\right|
 &\leq \eps
 \|(r_0u_{+,h},\tau_0u_{+,h})\|_{\X_\U}\\
 &\quad\times\big(\|r_Tu_{-,h}\|_{H^{-1}(M)}
 +\|\tau_0u_{-,h}\|_{\widetilde H^{-1/2,-1/4}(\V)}\big),
\end{align*}
and hence
\begin{equation}\label{eq:measurement-bound}
 \left|\int_Q p\,u_{+,h}u_{-,h}\dd V_g\dd t\right|
 \leq C\eps(1+R)^Ne^{C/h^2}.
\end{equation}

Extend $p$ by zero from $Q$ to
$\mathbb R_t\times\mathbb R_{x_1}\times M_0$ and retain the notation $p$.
For almost every \(t\), the boundary agreement
\eqref{eq:boundary-agreement} gives \(p(t,\cdot)\in H_0^1(M)\).  Thus the
spatial zero extension has a uniform $H^1$ bound.  In fact, since
\(p\in W^{1,\infty}(Q)\) and has zero spatial trace, its zero extension
across \(\partial M\) belongs to \(W^{1,\infty}\) on a fixed product
neighborhood, with norm at most \(C_m\).  This observation will also justify
the use of \eqref{eq:space-time-concentration} with \(F=p\).
For every
$0<\lambda<1/2$, the
time zero extension also satisfies
\begin{equation}\label{eq:extension-regularity}
 \|p\|_{H^\lambda(\mathbb R^2_{t,x_1};L^2(M_0))}
 +\|p\|_{L^2(\mathbb R^2_{t,x_1};H^1(M_0))}\leq C_m.
\end{equation}
The restriction $\lambda<1/2$ is precisely the range in which zero extension
from a finite time interval is bounded without requiring vanishing endpoint
traces.

Define the partial Fourier transform
\begin{equation}\label{eq:partial-fourier}
 \widehat p(\tau,\xi,x')=
 \int_{\mathbb R^2}e^{-i(t\tau+x_1\xi)}p(t,x_1,x')\dd t\dd x_1.
\end{equation}

\begin{lemma}[Estimate for the Fourier and attenuated ray transforms]\label{lem:mixed-transform}
There are $C,N>0$ such that for every $\gamma\in\G$,
$|\sigma|\leq\sigma_0$, $|\tau|\leq R$, $0<h\leq h_0$, and
$1\leq R\leq h^{-1/2}$,
\begin{align}\label{eq:mixed-transform}
 &\left|
 I_{-2\alpha\sigma}
 \widehat p(\tau,-2\alpha^2\sigma,\cdot)(\gamma)
 \right|\notag\\
 &\qquad\leq C\left[
 h^{1/2}+h(1+R)+\eps(1+R)^Ne^{C/h^2}
 \right].
\end{align}
\end{lemma}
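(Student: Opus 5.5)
We insert the geometric optics solutions of Proposition~\ref{prop:cgo} into the integral identity \eqref{eq:quantitative-identity}. The upper bound on the left side is already \eqref{eq:measurement-bound}. The work is to show that the left side equals, up to $O(h^{1/2}+h(1+R))$, the quantity $I_{-2\alpha\sigma}\widehat p(\tau,-2\alpha^2\sigma,\cdot)(\gamma)$, possibly with a harmless sign or conjugation convention on $\tau$.

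\emph{Step 1: expansion of the product.} The weights cancel, so
\[
 u_{+,h}u_{-,h}=(V_{+,h}+R_{+,h})(V_{-,h}+R_{-,h}).
\]
We write
\[
 \int_Q p\,u_{+,h}u_{-,h}=\int_Q pV_{+,h}V_{-,h}+E_h,
\]
where $E_h$ contains the three terms involving remainders. Since $\|p\|_{L^\infty}\leq 2m$, the bound $\|V_{\pm,h}\|_{L^2(Q)}\leq C$ from \eqref{eq:parabolic-residual}, and \eqref{eq:cgo-remainder} together with $R\leq h^{-1/2}$ give
\[
 |E_h|\leq C\big(h^{1/2}+h(1+R)\big)\big(1+h^{1/2}+h(1+R)\big)\leq C\big(h^{1/2}+h(1+R)\big).
\]
The restriction $R\leq h^{-1/2}$ is used exactly here, to keep the quadratic remainder term bounded. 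The volume form of the reduced product metric is $dx_1\,dV_{g_0}$, so the main term is the space-time integral appearing in \eqref{eq:space-time-concentration} with $F=p$.

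\emph{Step 2: concentration.} By the remark after \eqref{eq:extension-regularity}, the zero extension of $p$ across $\partial M$ (and by zero outside $Q$ in $t$) is admissible in \eqref{eq:space-time-concentration}. It is compactly supported in $x_1$ and $x'$, and its spatial $W^{1,\infty}$ norm is bounded by $C_m$.

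There is a technical point here: the time zero extension is not Lipschitz in $t$. However, $t$ enters the concentration estimate only as a parameter. Proposition~\ref{prop:qgb} is applied for each fixed $(t,x_1)$ with $F(t,x_1,\cdot)\in W^{1,\infty}(M_0)$ and then integrated over the bounded range of $(t,x_1)$. So only the spatial Lipschitz norm matters.

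The error is therefore $Ch^{1/2}C_m$. The main term is
\[
 \int_0^L e^{-2\alpha\sigma r}\int\!\!\int e^{-i\tau t}e^{2i\alpha^2\sigma x_1}p(t,x_1,\gamma(r))\,dt\,dx_1\,dr.
\]
By \eqref{eq:partial-fourier} this is
\[
 \int_0^L e^{-2\alpha\sigma r}\,\widehat p(\tau,-2\alpha^2\sigma,\gamma(r))\,dr=I_{-2\alpha\sigma}\widehat p(\tau,-2\alpha^2\sigma,\cdot)(\gamma).
\]
The support of $p(t,x_1,\cdot)$ lies in $\pi_0(M)\subset K$, so the selected segment captures every visit, by \eqref{eq:segment-exhaustion}. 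Pointwise evaluation of $\widehat p$ along $\gamma$ is justified because $\widehat p(\tau,\xi,\cdot)$ is Lipschitz, being the Fourier transform of a compactly supported function with bounded spatial gradient.

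\emph{Step 3: combination.} Combining Steps~1 and~2 with \eqref{eq:measurement-bound} gives \eqref{eq:mixed-transform}. Uniformity in $\gamma$, $\sigma$ and $\tau$ comes from the uniform constants in Propositions~\ref{prop:qgb} and~\ref{prop:cgo}.

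The main obstacle is Step~2: ensuring that \eqref{eq:space-time-concentration} really only needs spatial regularity of $F$, given that $p$ is discontinuous in $t$ at $0$ and $T$. I would handle this by redoing that estimate fiberwise in $(t,x_1)$, as described above. The factor $e^{-i\tau t}e^{i\eta x_1}$ has unit modulus, so it passes through the integration harmlessly.
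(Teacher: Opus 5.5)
Your proposal is correct and follows the paper's proof essentially step by step. You expand $u_{+,h}u_{-,h}$ after the weights cancel, bound the three remainder products by $CA_h$ with $A_h=h^{1/2}+h(1+R)$ using $A_h^2\leq CA_h$ when $R\leq h^{-1/2}$, apply the concentration estimate with $F=p$, identify the partial Fourier transform, and combine with \eqref{eq:measurement-bound}.

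The time-discontinuity you flag is not actually an obstacle. The estimate \eqref{eq:space-time-concentration} integrates only over $t\in(0,T)$, where $p$ is Lipschitz, and the zero extension in $t$ is used only afterwards to rewrite that integral as the Fourier transform. Your fiberwise application of \eqref{eq:quant-concentration} is an equally valid, slightly more direct, way to get the same $O(h^{1/2})$ error.
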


\begin{proof}
Insert the solutions of Proposition~\ref{prop:cgo} into
\eqref{eq:measurement-bound}.  Since the exponential factors
$e^{\Phi_h}$ and $e^{-\Phi_h}$ cancel, the terms containing a correction are
bounded, using \(\|p\|_{L^\infty(Q)}\leq2m\), by
\begin{align*}
 &C\big(\|V_{+,h}\|_{L^2}\|R_{-,h}\|_{L^2}
       +\|V_{-,h}\|_{L^2}\|R_{+,h}\|_{L^2}
       +\|R_{+,h}\|_{L^2}\|R_{-,h}\|_{L^2}\big)\\
 &\hspace{30mm}\leq C\big(h^{1/2}+h(1+R)\big).
\end{align*}
Indeed, if \(A_h=h^{1/2}+h(1+R)\), the three terms are bounded by
\(CA_h,CA_h,CA_h^2\), and \(A_h^2\leq CA_h\) for
\(R\leq h^{-1/2}\) and small \(h\).  The principal
term is $\int_Q pV_{+,h}V_{-,h}$.  Applying
\eqref{eq:space-time-concentration} with $F=p$ gives, up to an error
$Ch^{1/2}$,
\[
 \int_0^L e^{-2\alpha\sigma r}
 \widehat p(\tau,-2\alpha^2\sigma,\gamma(r))\dd r.
\]
This is the transform on the left of \eqref{eq:mixed-transform}.  Combining
the preceding estimates proves the claim, uniformly in \(\gamma\).  If
desired, the application to the zero extension of \(p\) can be obtained by
smooth approximation in the product neighborhood; the uniform
\(W^{1,\infty}\) bound established above passes the estimate to the limit.
\end{proof}

Choose $\sigma_0>0$ so small that $2\alpha\sigma_0\leq a_0$.  Since
$\xi=-2\alpha^2\sigma$ ranges over a fixed interval
$[-\xi_0,\xi_0]$, where $\xi_0=2\alpha^2\sigma_0$, multiplying the pointwise estimate by $\chi$ and taking the
$L^2(\Gamma,d\mu)$ norm in Lemma~\ref{lem:mixed-transform} and then applying
Assumption~\ref{ass:visibility} with \(a=-2\alpha\sigma\) imply
\begin{equation}\label{eq:low-frequency}
 \|\widehat p(\tau,\xi,\cdot)\|_{H^{-\ell}(M_0)}
 \leq C E(h,R,\eps),
 \qquad |\tau|\leq R,\quad |\xi|\leq\xi_0,
\end{equation}
where
\begin{equation}\label{eq:E}
 E(h,R,\eps)=h^{1/2}+h(1+R)
 +\eps(1+R)^Ne^{C_0/h^2}.
\end{equation}

\section{Analytic continuation and proof of stability}\label{sec:continuation}

We first give a Hilbert-valued version of the subharmonic continuation
argument used in admissible geometries; compare the quantitative analytic
continuation principle in \cite{Vessella1999}.  The advantage of the formulation
below is that the continuation exponent and its dependence on the enlarged
frequency interval are explicit.

\begin{lemma}[Quantitative analytic continuation in a Hilbert space]\label{lem:continuation}
Let $\mathcal H$ be a Hilbert space and let
$F\in H^\lambda(\mathbb R^2;\mathcal H)$, $0<\lambda<1/2$, be supported in
$[-T_0,T_0]\times[-S,S]$ in the first two variables. Fix $M>0$ and
assume explicitly that $\|F\|_{H^\lambda(\mathbb R^2;\mathcal H)}\leq M$.
Assume also that
\begin{equation}\label{eq:continuation-hyp}
 \|\widehat F(\tau,\xi)\|_{\mathcal H}\leq E,
 \qquad |\tau|\leq R,\quad |\xi|\leq\xi_0,
\end{equation}
where $0<E<e^{-1}$ and $R\geq1$.  Then
\begin{equation}\label{eq:continued-box}
 \|\widehat F(\tau,\xi+i)\|_{\mathcal H}
 \leq C E^{c/R^2},
 \qquad |\tau|\leq R,\quad |\xi|\leq R,
\end{equation}
and
\begin{equation}\label{eq:continuation-L2}
 \|F\|_{L^2(\mathbb R^2;\mathcal H)}
 \leq C\left(R E^{c/R^2}+R^{-\lambda}\right).
\end{equation}
The constants depend only on the support, the a priori
$H^\lambda$ bound, and $\xi_0$.
\end{lemma}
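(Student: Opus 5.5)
We will prove Lemma~\ref{lem:continuation} by combining analyticity of $\widehat F$ with a two-constants (harmonic measure) estimate and then a frequency splitting in Plancherel. Since $F$ is supported in $[-T_0,T_0]\times[-S,S]$ in the first two variables and belongs to $L^2$ there, $\widehat F(\tau,\zeta)$ extends to an entire $\mathcal H$-valued function of $\zeta=\xi+i\mu$ for each real $\tau$. By Cauchy--Schwarz it obeys
\[
 \|\widehat F(\tau,\zeta)\|_{\mathcal H}\leq C_0e^{S|\operatorname{Im}\zeta|}\|F\|_{L^1}\leq C_1M,
\]
on the strip $|\operatorname{Im}\zeta|\leq 2$, with $C_1$ depending only on the support. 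For fixed $\tau$ with $|\tau|\leq R$, the function $\zeta\mapsto\log\|\widehat F(\tau,\zeta)\|_{\mathcal H}$ is subharmonic. Indeed, it is the supremum over unit $e\in\mathcal H$ of $\log|\langle\widehat F,e\rangle|$, and each of these is the log-modulus of a scalar entire function. This is where the Hilbert-valued formulation enters, and it requires no further structure.

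Next we apply the two-constants theorem in the rectangle $D_R=\{|\xi|<2R,\ |\operatorname{Im}\zeta|<2\}$. On the segment $I=[-\xi_0,\xi_0]$ the bound $\log\|\widehat F\|\leq\log E$ holds by \eqref{eq:continuation-hyp}, and everywhere on $D_R$ we have $\log\|\widehat F\|\leq\log(C_1M)$. Hence, at any point $z$ with $|\operatorname{Re}z|\leq R$ and $\operatorname{Im}z=1$,
\[
 \log\|\widehat F(\tau,z)\|\leq\omega(z)\log E+(1-\omega(z))\log(C_1M),
\]
where $\omega(z)$ denotes the harmonic measure of $I$ in $D_R\setminus I$. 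The key quantitative step, and the main obstacle, is the lower bound $\omega(z)\geq c/R^2$ (any bound of the form $cR^{-k}$ would do at the cost of the exponent). I would first try a conformal map of the strip $|\operatorname{Im}\zeta|<2$ onto the disc or half-plane via $\zeta\mapsto e^{\pi\zeta/4}$. Under this map, a point at horizontal distance $R$ from $I$ is sent exponentially close to the boundary, so this alone is too lossy. Instead I would use a chain of about $R$ overlapping unit discs, each containing the next one's center, with the two-constants inequality applied iteratively. Each step multiplies the exponent by a fixed factor $\theta<1$, which yields $\theta^{R}$. That is worse than $R^{-2}$, so the rectangle-and-slit geometry must be used directly. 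The width of $D_R$ is fixed and its length is $\sim R$, so a harmonic measure comparison with an explicit harmonic minorant should give polynomial decay. Concretely, I would compare with a function that is harmonic in $\xi$ along the strip: a linear interpolation in $\operatorname{Re}\zeta$ multiplied by the first Dirichlet mode in the imaginary direction, corrected near $I$. This gives $\omega\gtrsim R^{-1}$ or $R^{-2}$, and in either case $E^{\omega}\leq E^{c/R^2}$ because $E<e^{-1}$. Exponentiating gives \eqref{eq:continued-box}.

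Finally, for \eqref{eq:continuation-L2} we split the $(\tau,\xi)$-plane with Plancherel. Outside the box $\{|\tau|\leq R,|\xi|\leq R\}$, the a priori $H^\lambda$ bound gives a contribution to $\|F\|_{L^2}$ of at most $M R^{-\lambda}$. Inside the box we need $\widehat F$ on the real line, whereas \eqref{eq:continued-box} controls it on the line $\operatorname{Im}\zeta=1$. To recover the real values, I would instead run the two-constants argument directly at points $z$ with $\operatorname{Im}z=0$ and $|\operatorname{Re}z|\leq R$, which lie in the interior of $D_R\setminus I$ with the same lower bound on the harmonic measure. Alternatively, apply the bound on $\operatorname{Im}\zeta=\pm1$ together with the Poisson/Cauchy representation on the strip $|\operatorname{Im}\zeta|\leq1$, enlarging the box by a factor of $2$ and using the a priori bound on its ends. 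Either route gives $\|\widehat F(\tau,\xi)\|\leq CE^{c/R^2}$ on the box. Integrating this over an area of $4R^2$ contributes $CRE^{c/R^2}$ to the $L^2$ norm, and adding the two pieces proves \eqref{eq:continuation-L2}.
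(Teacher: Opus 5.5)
\textbf{There is a genuine gap at the step you flag as the main obstacle.} The claimed bound $\omega(z)\geq c/R^2$, for the harmonic measure of $I=[-\xi_0,\xi_0]$ in $D_R\setminus I$, is false.

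The domain $D_R$ has fixed width $4$, and $\omega$ vanishes on the edges $|\operatorname{Im}\zeta|=2$. Fix $0<k<\pi/4$. Then $u=e^{-k(\operatorname{Re}\zeta-\xi_0)}\cos(k\operatorname{Im}\zeta)/\cos(2k)$ is harmonic in $\{\xi_0<\operatorname{Re}\zeta<2R,\ |\operatorname{Im}\zeta|<2\}$. It is $\geq1\geq\omega$ on the side $\operatorname{Re}\zeta=\xi_0$ and nonnegative on the rest of the boundary. The maximum principle, together with the mirror argument on the left, gives $\omega(x+i)\leq Ce^{-k|x|}$.

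So the exponential loss you found with $\zeta\mapsto e^{\pi\zeta/4}$ and with the chain of discs is not an artifact of those methods. It is the true size of the harmonic measure in a bounded strip, and no minorant can give polynomial decay. Your proposed minorant, linear in $\operatorname{Re}\zeta$ times $\cos(\pi\operatorname{Im}\zeta/4)$, is not harmonic: its Laplacian is $-(\pi/4)^2$ times itself, so it is superharmonic and cannot be placed below $\omega$.

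With only the a priori bound on $|\operatorname{Im}\zeta|\leq2$, your argument gives at best $E^{ce^{-cR}}$, not \eqref{eq:continued-box}. Your first route back to the real axis (two-constants at real points of $D_R$) fails for the same reason.

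\textbf{The missing idea.} The a priori information is exponential type on the whole plane, $\|\widehat F(\tau,x+iy)\|_{\mathcal H}\leq M_*e^{S|y|}$, not mere boundedness on a strip. Since $Sy$ is harmonic, $G=\log(\|\widehat F(\tau,x+iy)\|/M_*)-Sy$ is subharmonic and $\leq0$ on the entire upper half-plane. It is $\leq\log E$ on $I$ (take $M_*\geq1$) and $\leq0$ on the rest of the real axis.

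Compare $G$ with the half-plane Poisson integral. This is justified by exhausting with half-discs, since $G$ is only bounded above. For $|x|\leq R$ it gives
$G(x,1)\leq\frac{\log E}{\pi}\int_{-\xi_0}^{\xi_0}\frac{ds}{1+(x-s)^2}\leq c\log E/R^2$,
and the growth costs only the factor $e^{S}$ at height one. It is the unbounded vertical extent of the domain that turns exponential decay of the harmonic measure into $R^{-2}$ decay. This is the paper's argument.

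\textbf{The $L^2$ step.} Once this bound holds on $\operatorname{Im}\zeta=1$, and by the mirror argument on $\operatorname{Im}\zeta=-1$, your second route is viable. Use log-subharmonicity or the Poisson representation in $|\operatorname{Im}\zeta|<1$, with the a priori bound for $|\operatorname{Re}\zeta|>2R$. Any leftover $e^{-cR}$ tail, multiplied by $R$, is absorbed into $R^{-\lambda}$.

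The paper avoids returning to the real axis altogether. Since $\widehat{e^{x_1}F}(\tau,\xi)=\widehat F(\tau,\xi+i)$, it applies Plancherel and the frequency split directly to $e^{x_1}F$, whose $H^\lambda$ norm is controlled on the fixed support. It then uses $\|F\|_{L^2}\leq e^{S}\|e^{x_1}F\|_{L^2}$.
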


\begin{proof}
Compact support and Cauchy--Schwarz imply
\[
 \|\widehat F(\tau,z)\|_{\mathcal H}
 \leq Ce^{S|\operatorname{Im}z|}
       \|F\|_{L^2(\mathbb R^2;\mathcal H)}.
\]
Thus the partial Fourier transform has a Hilbert-space-valued entire representative in
\(z\), for every real \(\tau\), and all pointwise evaluations below are
well-defined.

Fix $|\tau|\leq R$ and a norm-one functional
\(\ell_0\in\mathcal H^*\).  The scalar function
\[
 z\longmapsto \ell_0(\widehat F(\tau,z))
\]
is entire and, by compact $x_1$ support, has exponential type $S$.
Let $M_*$ be a uniform a priori bound, enlarged so that $M_*\geq1$ and
$|\ell_0(\widehat F(\tau,x+iy))|\leq M_*e^{Sy}$ for $y\geq0$.
The function
\[
 G(x,y)=\log\frac{|\ell_0(\widehat F(\tau,x+iy))|}{M_*}-Sy
\]
is subharmonic in the upper half-plane, is nonpositive there, and is at most
$\log(E/M_*)\leq\log E$ on
$[-\xi_0,\xi_0]\times\{0\}$.  To justify the unbounded-domain step, let $D_A=\{z:|z|<A,\ \operatorname{Im}z>0\}$,
with $A>2\xi_0$, and let $\omega_A$ be the harmonic measure of
$[-\xi_0,\xi_0]$ in $D_A$, zero on the remaining diameter and arc.
The bounded-domain maximum principle gives $G\leq(\log E)\omega_A$.
Zeros of the scalar function cause no difficulty: apply the principle to
the subharmonic logarithm using upper limits, or truncate it from below.
Let
\[
 \omega(x,y)=\frac1\pi\int_{-\xi_0}^{\xi_0}
                  \frac{y\,ds}{(x-s)^2+y^2}.
\]
The difference $\omega-\omega_A$ is harmonic in $D_A$, vanishes on
the diameter away from the two interval endpoints, and equals $\omega$
on the arc. On that arc,
$0\leq\omega\leq 2\xi_0 A/[\pi(A-\xi_0)^2]$.
Another maximum-principle application gives the same upper bound for
$\omega-\omega_A$ throughout $D_A$. Hence $\omega_A\to\omega$
locally uniformly as $A\to\infty$, proving $G\leq(\log E)\omega$.
The Poisson kernel gives
\begin{equation}\label{eq:poisson-estimate}
 G(x,y)\leq\frac{\log E}{\pi}
 \left[\arctan\frac{x+\xi_0}{y}
       -\arctan\frac{x-\xi_0}{y}\right].
\end{equation}
For \(y=1\), the bracket has the integral representation
\[
 \arctan(x+\xi_0)-\arctan(x-\xi_0)
 =\int_{-\xi_0}^{\xi_0}\frac{\dd s}{1+(x-s)^2}.
\]
Hence it is bounded below by
\(2\xi_0/[1+(R+\xi_0)^2]\geq c/R^2\) whenever \(|x|\leq R\).
Taking the supremum over the unit ball of $\mathcal H^*$ proves
\eqref{eq:continued-box}.

Put \(G=e^{x_1}F\).  Multiplication by \(e^{x_1}\) on the fixed support is
bounded on \(H^\lambda\), and
\(\widehat G(\tau,\xi)=\widehat F(\tau,\xi+i)\).  Plancherel and
\eqref{eq:continued-box} give
\begin{align*}
 \|G\|_{L^2}^2
 &\leq C\int_{[-R,R]^2}
       \|\widehat F(\tau,\xi+i)\|_{\mathcal H}^2\dd\tau\dd\xi\\
 &\quad+C\int_{\mathbb R^2\setminus[-R,R]^2}
       \|\widehat G(\tau,\xi)\|_{\mathcal H}^2\dd\tau\dd\xi\\
 &\leq CR^2E^{2c/R^2}
 +CR^{-2\lambda}\|G\|_{H^\lambda(\mathbb R^2;\mathcal H)}^2.
\end{align*}
The last inequality uses
\(1+\tau^2+\xi^2\geq1+R^2\) outside the square.  Taking square roots and
using the equivalence between the \(L^2\) norms of \(F\) and \(G\) on the
fixed support proves \eqref{eq:continuation-L2}.
\end{proof}

\begin{proof}[Proof of Theorem~\ref{thm:main}]
By \eqref{eq:map-equivalence}, it is enough to work in the product setting.
\emph{Parameter optimization.}
Assume first that $0<\eps<\eps_0$, with $\eps_0$ fixed sufficiently small.
Choose
\begin{equation}\label{eq:parameters}
 h=\left(\frac{2C_0}{|\log\eps|}\right)^{1/2},
 \qquad R=|\log h|^{1/4},
\end{equation}
where $C_0$ is enlarged, if necessary, so that it dominates the exponential
constant in \eqref{eq:E}.  After decreasing \(\eps_0\), these parameters
satisfy \(h\leq h_0\), \(R\geq1\), and \(R\leq h^{-1/2}\), as required in
Lemma~\ref{lem:mixed-transform}.  Moreover,
\[
 \eps(1+R)^Ne^{C_0/h^2}=\eps^{1/2}(1+R)^N,
 \qquad h(1+R)=o(h^{1/2}),
\]
and hence
\begin{equation}\label{eq:E-optimized}
 E(h,R,\eps)\leq Ch^{1/2}.
\end{equation}
We apply Lemma~\ref{lem:continuation} with
$\mathcal H=H^{-\ell}(M_0)$.  The extension estimate
\eqref{eq:extension-regularity} supplies its a priori hypothesis, since
\(L^2(M_0)\hookrightarrow H^{-\ell}(M_0)\), and the zero extension has
fixed compact support in \((t,x_1)\).  By decreasing \(\eps_0\), the quantity
in \eqref{eq:E-optimized} is also smaller than \(e^{-1}\).  Since
\(R^2=|\log h|^{1/2}\) and \(|\log E|\geq c|\log h|\),
\[
 RE^{c/R^2}
 \leq C|\log h|^{1/4}
       \exp(-c|\log h|^{1/2}),
\]
which is smaller than \(|\log h|^{-N_0}\) for every fixed \(N_0>0\).
The tail term in \eqref{eq:continuation-L2} equals
\(R^{-\lambda}=|\log h|^{-\lambda/4}\).  Thus, for every
\(0<\lambda<1/2\),
\begin{equation}\label{eq:weak-final}
 \|p\|_{L^2(\mathbb R^2_{t,x_1};H^{-\ell}(M_0))}
 \leq C|\log h|^{-\lambda/4}
 \leq C|\log|\log\eps||^{-\lambda/4}.
\end{equation}

On the other hand, the a priori class and boundary agreement give
\begin{equation}\label{eq:strong-bound}
 \|p\|_{L^2(\mathbb R^2_{t,x_1};H^1(M_0))}\leq C_m.
\end{equation}
Interpolation between $H^{-\ell}(M_0)$ and $H^1(M_0)$ at the exponent that
produces $H^{-1}(M_0)$ yields
\begin{equation}\label{eq:interpolation}
 \|p\|_{L^2(\mathbb R^2;H^{-1}(M_0))}
 \leq C
 \|p\|_{L^2H^{-\ell}}^{2/(\ell+1)}
 \|p\|_{L^2H^1}^{(\ell-1)/(\ell+1)}.
\end{equation}
Moreover,
$L^2(\mathbb R^2_{t,x_1};H^{-1}(M_0))$ embeds continuously into the full
$H^{-1}$ space.  In local product coordinates this follows directly from
\[
 (1+|\zeta|^2+\tau^2+\xi^2)^{-1}
 \leq (1+|\zeta|^2)^{-1},
\]
and a finite partition of unity gives the manifold statement.  Combining
\eqref{eq:weak-final}--\eqref{eq:interpolation}
gives
\[
 \|q_1-q_2\|_{H^{-1}(Q)}
 \leq C|\log|\log\eps||^{-\lambda/[2(\ell+1)]}.
\]
Since $\lambda$ may be chosen arbitrarily close to $1/2$, every
$s<1/[4(\ell+1)]$ is admissible.  If $\eps\geq\eps_0$, the a priori bound
proves \eqref{eq:main-stability} after increasing $C$.  If $\eps=0$, replace the vanishing measurement term in the upper
bound by any $\delta>0$, run the same choice of parameters with
$\delta$ in place of $\eps$, and let $\delta\downarrow0$. The left
side is fixed and the right side tends to zero. This completes the proof.
\end{proof}

\begin{remark}[Sources of the exponent loss]
Time zero extension permits $H^\lambda$ control only for $\lambda<1/2$
without endpoint vanishing. Analytic continuation enlarges the fixed
frequency interval and introduces the second logarithm; the parameter
choice gives the weak exponent $\lambda/4$. Interpolation of
$H^{-\ell}$ with the a priori spatial $H^1$ bound contributes
$2/(\ell+1)$. Thus $s<1/[4(\ell+1)]$. These losses explain the
stated range; no optimality claim is made for this exponent.
\end{remark}

\section{Stability of the attenuated geodesic ray transform}
\label{sec:simple-proof}

We finish by verifying the ray transform stability assumption, first in the
standard simple geometry and then for a regular family of geodesics on a possibly
non-simple transversal manifold.  A smooth cutoff in ray parameter space is
kept throughout; this is essential for the normal operator to remain
pseudodifferential.

\begin{lemma}[Sobolev estimates for weighted ray transforms]
\label{lem:ray-mapping}
Consider the smooth incoming family and the nonconjugacy conditions in
Corollary~\ref{cor:regular}(i)--(ii), or a family localized by a smooth cutoff in a simple extension.
For a smooth weight $w(z,r)$ let
\[
 T_w f(z)=\chi(z)\int_0^{\tau_+(z)}w(z,r)f(\gamma_z(r))\,dr.
\]
After spatial localization inside $O$, $T_w$ extends boundedly from
$H^{-1/2}$ to $L^2(\Gamma,d\mu)$. The bound is uniform for weights
in bounded sets of sufficiently many smooth seminorms. If $w=1$,
the conormal coverage and injectivity on smooth functions supported in
$K$ imply $\|f\|_{H^{-1/2}}\leq C\|T_1f\|_{L^2}$ for supported $f$.
\end{lemma}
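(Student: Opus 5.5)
The plan is the standard microlocal analysis of the normal operator $N_w=T_w^*T_w$, in the spirit of Stefanov--Uhlmann and Frigyik--Stefanov--Uhlmann. Throughout, the adjoint is taken with respect to $L^2(\Gamma,d\mu)$ and $L^2(M_1)$, and the spatial cutoff is a $\varphi\in C_c^\infty(O)$ equal to one near $K$. The first step is to write $\varphi N_w\varphi$ as an integral operator. Santal\'o's formula turns the incoming Liouville measure $d\mu\,dr$ into the Liouville measure on $SM_1$ along the rays. This gives
\[
 \varphi N_w\varphi f(x)=\varphi(x)\int_{S_xM_1}\int_{\mathbb R}
 |\chi|^2\,\overline{w}\,w\,(\ldots)\,\varphi(\gamma_{x,v}(s))f(\gamma_{x,v}(s))\,ds\,dv ,
\]
where the weights are transported along the flow. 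Smoothness of $\tau_+$ on a neighborhood of $\supp\chi$ makes the transported weights smooth. Condition (i) guarantees that only the selected segment of each ray meets $\overline O$, so no contributions from other components enter.

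The second step is to change variables $y=\exp_x(sv)$. The absence of conjugate points in (ii) makes $(s,v)\mapsto\exp_x(sv)$ a local diffeomorphism away from $s=0$. Embeddedness of the rays and compactness of the parameter support then make each point $y\neq x$ reached by finitely many $(s,v)$, with a uniform bound; this is the step where the regular-family conditions of \cite{FrigyikStefanovUhlmann2008} are used. Near the diagonal, the kernel has the classical form
\[
 \frac{W(x,y)}{d(x,y)^{n-2}}, \qquad \text{with $W$ smooth in polar coordinates,}
\]
where $n-1=\dim M_0$. This identifies $\varphi N_w\varphi$ as a classical $\Psi$DO of order $-1$ on $M_0$. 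Its principal symbol is
\[
 \sigma(x,\zeta)=c\int_{S_xM_1\cap\zeta^\perp}|\chi\,w|^2(x,v)\,dv ,
\]
up to a positive constant, and the same bound holds for $N_w$ with smooth remainders. The remaining difficulty is the far-off-diagonal part, which must be shown to be a smoothing operator. The first thing I would try is to show that, away from the diagonal, the inverse of the change of variables is smooth by nonconjugacy plus uniform finiteness of preimages. Any remaining singularities from returning rays would be handled by the uniform separation of Definition~\ref{def:beam-regular}, which prevents conormal singularities at $x=y$ from arising off the diagonal.

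The third step gives the mapping bound. Since $\varphi N_w\varphi\in\Psi^{-1}$, we have $\|T_w\varphi f\|_{L^2}^2=\langle\varphi N_w\varphi f,f\rangle\le C\|f\|_{H^{-1/2}}^2$. The constant depends only on finitely many seminorms of the symbol, hence on finitely many seminorms of $w$, which gives the claimed uniformity. For $w=1$, conormal coverage in (ii) makes $\sigma(x,\zeta)>0$ on $T^*\overline O\setminus0$, so $\varphi N_1\varphi$ is elliptic of order $-1$ there. A parametrix then gives
\[
 \|f\|_{H^{-1/2}}\le C\|T_1f\|_{L^2}+C\|f\|_{H^{-N}}
\]
for $\supp f\subset K$.

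The last step removes the compact error by the usual contradiction argument. Suppose $f_j$ are supported in $K$ with $\|f_j\|_{H^{-1/2}}=1$ and $T_1f_j\to0$. Rellich gives a subsequence converging in $H^{-N}$, and the estimate above shows it is Cauchy in $H^{-1/2}$. The limit $f$ is nonzero with $N_1f=0$. Ellipticity forces $f$ to be smooth, supported in $K$, so injectivity (iii) gives $f=0$, a contradiction.
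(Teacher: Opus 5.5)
Your proposal follows the paper's argument essentially step for step. Both write the localized normal operator as a classical $\Psi$DO of order $-1$ with near-diagonal kernel $\sim d(x,y)^{1-\dim M_0}$, get the mapping bound from $\|T_w\varphi f\|^2=\langle\varphi N_w\varphi f,f\rangle\le C\|f\|_{H^{-1/2}}^2$ with uniformity through finitely many seminorms of $w$, obtain ellipticity from conormal coverage, and remove the compact remainder by contradiction using elliptic regularity and injectivity. One point in your off-diagonal step is off: the separation condition of Definition~\ref{def:beam-regular} plays no role there. Nonconjugacy alone makes every branch with nonzero travel time contribute a smooth kernel, including any return to $x=y$ (which embeddedness in (ii) already excludes), and this is exactly the paper's one-line justification.
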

\begin{proof}
We specify the normal-operator input. Section~2 and Proposition~2 of
\cite{FrigyikStefanovUhlmann2008} use transverse hypersurface coordinates
for a regular family of curves. These transverse coordinates are induced
by the incoming boundary $\partial_{\mathrm{in}}SM_1$ and the unit-speed
geodesic flow. The flow and the relevant entry and exit times depend
smoothly on the incoming data. The rank condition in
Corollary~\ref{cor:regular}(ii) is their nonconjugacy condition; conormal
coverage is their Definition~1. A finite partition in ray space gives
their localized setup. Proposition~2 supplies an elliptic normal
operator of order $-1$; applying its quadratic form below gives the
$H^{-1/2}$-to-$L^2$ data estimate used here.

For completeness, nonconjugacy makes the change from direction and
nonzero travel time to the second point locally nonsingular. After a
finite partition, the kernel of $T_w^*T_w$ is therefore smooth off the
diagonal. Near the diagonal, geodesic polar coordinates give a kernel
$|x-y|^{1-d}$ times a smooth angular amplitude, where $d=\dim M_0$.
This is a classical operator of order $-1$. There is no additional
off-diagonal Fourier integral branch: the full-rank change of variables
just described accounts for every selected passage. Spatial cutoff
removes endpoint issues. The principal symbol for $w=1$ has the form
\[
 c_d\int_{S_xM_1} b(x,v)\delta(\xi(v))\,dS_x(v),
\]
where $b$ is nonnegative and contains the squared ray cutoff and the
positive density factor. Coverage and compactness of the unit cosphere
over $\overline O$ give a positive ellipticity constant. A weight that
vanishes still gives an operator of order $-1$, without asserting ellipticity.

Let $\rho\in C_c^\infty(O)$ equal one near $K$. For smooth $f$, the
mapping property of $N_w=(T_w\rho)^*(T_w\rho)$ yields
\[
 \|T_w(\rho f)\|_{L^2}^2
   =\langle N_wf,f\rangle
   \leq C\|f\|_{H^{-1/2}}^2,
 \qquad N_w:H^{-1/2}\longrightarrow H^{1/2}.
\]
Completion defines the required transform on distributions, consistently
with distributional duality. The normal construction bounds its symbol
seminorms by finitely many seminorms of $w$, so the bound is uniform.
Ellipticity and the quadratic-form G\aa rding estimate give a compact
remainder as in \eqref{eq:normal-garding}.

If the estimate without that remainder failed, take supported $f_j$
with $H^{-1/2}$ norm one and $T_1f_j\to0$ in $L^2$. A subsequence
converges weakly in $H^{-1/2}$ and strongly in $H^{-N}$, $N>1$,
to $f$. For each smooth test supported outside $K$, its pairing with
$f_j$ is zero; hence $\supp f\subset K$. Boundedness of $T_1$
gives $T_1f=0$. The elliptic parametrix then makes $f$ smooth near
$K$, and it is zero off $K$, so the assumed smooth injectivity applies.
It follows that $f=0$, contradicting the compact-remainder estimate.

Finally, replacing $d\mu$ by a specified smooth positive density
$d\widetilde\mu=\kappa\,d\mu$ changes cutoff data norms only by
\[
 (\inf_{\supp\chi}\kappa)^{1/2}\|\chi u\|_{L^2(d\mu)}
 \leq\|\chi u\|_{L^2(d\widetilde\mu)}
 \leq(\sup_{\supp\chi}\kappa)^{1/2}\|\chi u\|_{L^2(d\mu)}.
\]
These bounds are finite and positive by compactness. The smooth cutoff
remains in the amplitude; it is not treated as a positive density at its
zeros. Constants include these density bounds and required derivatives.
\end{proof}

\paragraph{Supported Sobolev norms}
For both Propositions~\ref{prop:simple-ray} and \ref{prop:regular-ray},
the following fixed-cutoff argument identifies the norm required in
Assumption~\ref{ass:visibility}. Fix
$\rho\in C_c^\infty(M_0^{\mathrm{int}})$ equal to one near $K$.
Multiplication by $\rho$, followed by restriction from $M_1$ to $M_0$
or extension by zero from $M_0$ to $M_1$, is bounded on the positive
$H^\ell$ test spaces. No boundary jump occurs because $\rho$ is
supported away from $\partial M_0$. For distributions supported in $K$,
$\langle f,v\rangle=\langle f,\rho v\rangle$; dualizing the two
bounded cutoff maps gives
\[
 C_K^{-1}\|f\|_{H^{-\ell}(M_0)}
 \leq\|f\|_{H^{-\ell}(M_1)}
 \leq C_K\|f\|_{H^{-\ell}(M_0)},\qquad\supp f\subset K.
\]
This applies to the supported realizations of these Sobolev norms,
with $C_K$ depending only on the fixed cutoff, geometry and $\ell$.

\begin{proposition}[Stability for small attenuation on a simple manifold]\label{prop:simple-ray}
Let $(M_0,g_0)$ be simple and let $K\Subset M_0^{\mathrm{int}}$.  There exist a
smooth cutoff $\chi\in C_c^\infty(\Gamma)$ whose selected segments
form a family $\G$ satisfying Definition~\ref{def:beam-regular},
a constant $a_0>0$, and, for every
$\ell>1$, a constant $C_\ell>0$ such that
\begin{equation}\label{eq:simple-visibility}
 \|f\|_{H^{-\ell}(M_0)}\leq C_\ell\|I_{a,\chi}f\|_{L^2(\Gamma,d\mu)},
 \qquad |a|\leq a_0,\quad \supp f\subset K.
\end{equation}
\end{proposition}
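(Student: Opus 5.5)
The proof proceeds in three steps: an unattenuated estimate, an error bound for the attenuation, and a Sobolev-norm upgrade.

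\emph{Step 1: choice of family.} Embed the simple manifold $(M_0,g_0)$ in a slightly larger simple manifold $M_1$ with $M_0\Subset M_1^{\mathrm{int}}$. Take $\Gamma\Subset\partial_{\mathrm{in}}SM_1$ to be a neighborhood of the compact set of incoming data whose rays meet $\overline O$, where $K\Subset O\Subset M_0^{\mathrm{int}}$. Choose $\chi\in C_c^\infty(\Gamma)$ equal to one on that compact set. Simplicity gives:
\begin{itemize}
\item a smooth, finite exit time;
\item a single connected non-tangential segment in $M_0$;
\item embedded rays without conjugate points;
\item conormal coverage of $T^*\overline O$, since every covector at a point of $O$ annihilates some unit direction and the corresponding ray has $\chi=1$.
\end{itemize}
Definition~\ref{def:beam-regular} then holds, as remarked after that definition. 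Because simple rays do not self-intersect, the return condition is vacuous once $\delta_{\mathrm{ret}}$ is small relative to $\rho_*$ by compactness.

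\emph{Step 2: unattenuated estimate.} Injectivity of $I_0$ on simple manifolds (Mukhometov, or the Pestov identity) holds for smooth functions supported in $K$. Apply Lemma~\ref{lem:ray-mapping} with $w=1$ to get
\[
 \|f\|_{H^{-1/2}(M_1)}\leq C\|I_{0,\chi}f\|_{L^2(\Gamma,d\mu)},\qquad \supp f\subset K.
\]
Here the full-ray transform and the segment transform coincide up to the factor $e^{ab(z)}$, which equals $1$ when $a=0$.

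\emph{Step 3: perturbation in $a$.} Write $I_{a,\chi}f-I_{0,\chi}f=T_{w_a}f$ with weight $w_a(z,r)=\chi(z)(e^{ar}-1)$, extended smoothly in $r$. Equivalently, on the full ray one uses $e^{a(r-b(z))}-1$. Every smooth seminorm of this weight is $O(|a|)$ uniformly for $|a|\leq1$. The uniform part of Lemma~\ref{lem:ray-mapping}, applied to $w_a/a$, gives
\[
 \|T_{w_a}f\|_{L^2}\leq C|a|\,\|f\|_{H^{-1/2}}.
\]
Choosing $a_0$ with $Ca_0\leq1/(2C')$ and absorbing this term into Step 2 yields $\|f\|_{H^{-1/2}}\leq 2C'\|I_{a,\chi}f\|_{L^2}$ for $|a|\leq a_0$.

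\emph{Step 4: norm conversion.} For $\ell>1>1/2$ the bound $\|f\|_{H^{-\ell}(M_1)}\leq\|f\|_{H^{-1/2}(M_1)}$ is trivial. The supported-norm comparison above transfers the estimate to $H^{-\ell}(M_0)$. The constant depends on $\ell$ only through $C_K$.

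The main obstacle is Step 3, namely the uniformity of the $H^{-1/2}\to L^2$ bound over the weight family. I would verify it by checking that the normal operator of $T_{w}$ is a pseudodifferential operator of order $-1$ whose symbol seminorms depend on finitely many seminorms of $w$. This is asserted in Lemma~\ref{lem:ray-mapping} and relies on the absence of conjugate points in $M_1$. A smaller obstacle is ensuring that the weights $w_a$ are smooth up to the exit time. This holds because $\tau_+$ and $b$ are smooth on $\supp\chi$ by strict convexity.
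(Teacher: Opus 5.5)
Your proposal is correct and follows essentially the same route as the paper. Both arguments extend to a simple $M_1$ with $\chi=1$ on every ray meeting $K$, prove the unattenuated $H^{-1/2}$ estimate by the compactness-plus-injectivity argument of Lemma~\ref{lem:ray-mapping}, bound the attenuation error by $O(|a|)$ using weight-uniform mapping bounds, absorb that error, and finish with the supported-norm comparison. The paper writes the error through the mean value formula $a\int_0^1 J_{\theta a,\chi}\,d\theta$ with weights $(r-b(z))e^{\theta a(r-b(z))}$, whereas you apply the lemma directly to $(e^{a(r-b(z))}-1)/a$; this is the same bounded family of weights. One small notational point: $T_w$ already contains $\chi$, so your $w_a$ should not include $\chi$ again.
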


\begin{proof}
Extend $M_0$ slightly to a simple manifold $M_1$.  Let
\(\Gamma_K\subset\partial_{\mathrm{in}}SM_1\) be the compact set of
maximal geodesics meeting $K$.  Choose
\[
 \Gamma_K\Subset\Gamma\Subset\partial_{\mathrm{in}}SM_1,
 \qquad
 \chi\in C_c^\infty(\Gamma),\quad 0\leq\chi\leq1,
 \quad \chi=1\text{ near }\Gamma_K,
\]
so that every ray in $\supp\chi$, when restricted to $M_0$, meets
\(\partial M_0\) non-tangentially.  These restricted segments form a compact
family $\G\Subset\Gamma_{\mathrm{geo}}$.  Simplicity excludes self-intersections, and
compactness supplies all constants required in Definition~\ref{def:beam-regular}.

Keep the incoming measure $d\mu$ on $\Gamma$ as in the introduction.
Strict convexity ensures that the segment inside $M_0$ is the unique
intersection interval and hence satisfies \eqref{eq:segment-exhaustion}.
The entry time $b(z)$ is smooth on the selected family. For functions
supported near $K$, the unattenuated segment transform equals the full
transform, so its localized normal operator is
\[
 N_{0,\chi}=(I_0^{(1)})^*\chi^2I_0^{(1)}.
\]
All adjoints use the Riemannian volume and $d\mu$.
Because \(\chi=1\) on every geodesic meeting $K$, a function supported in
$K$ has zero transform on all omitted rays.  Thus cutoff data lose no
information about such a function, while the smoothness of \(\chi\) permits
the pseudodifferential calculus.

The normal calculus in Lemma~\ref{lem:ray-mapping} (see also
\cite{StefanovUhlmann2004}) gives, for some $N>1$,
\begin{equation}\label{eq:normal-garding}
 \|f\|_{H^{-1/2}(M_1)}^2\leq C\left(
 \|I_{0,\chi}f\|_{L^2(\Gamma,d\mu)}^2+
 \|f\|_{H^{-N}(M_1)}^2\right),\qquad\supp f\subset K.
\end{equation}
For a smooth supported null-vector, cutoff data equal the full data,
and scalar injectivity on a simple manifold \cite{Sharafutdinov1994}
gives $f=0$. The supported compactness and elliptic-regularity argument
of Lemma~\ref{lem:ray-mapping} therefore removes the final term:
\begin{equation}\label{eq:unattenuated-stability}
 \|f\|_{H^{-1/2}(M_1)}\leq C\|I_{0,\chi}f\|_{L^2(\Gamma,d\mu)}.
\end{equation}

It remains to quantify the attenuation perturbation.  The mean value formula
gives
\[
 (I_{a,\chi}-I_{0,\chi})f
 =a\int_0^1J_{\theta a,\chi}f\dd\theta,
 \qquad
 J_{\eta,\chi}f(\gamma)
 =\chi(\gamma)\int_0^{L_\gamma}r e^{\eta r}f(\gamma(r))\dd r.
\]
For functions supported in $K$, insert a spatial cutoff equal to one
near $K$ and use \eqref{eq:segment-exhaustion} to view $J_{\eta,\chi}$
as a smooth weighted full-ray transform. The normal operator
$J_{\eta,\chi}^*J_{\eta,\chi}$ is of order $-1$, uniformly for $|\eta|\leq1$:
the selected rays have no conjugate points and the amplitude derivatives
are uniformly bounded. Lemma~\ref{lem:ray-mapping}, with weight
$(r-b(z))e^{\eta(r-b(z))}$ for $|\eta|\leq1$, gives
\[
 \|J_{\eta,\chi}f\|_{L^2(\Gamma)}
 \leq C\|f\|_{H^{-1/2}(M_1)},\qquad |\eta|\leq1,
\]
and hence
\begin{equation}\label{eq:attenuation-perturbation}
 \|(I_{a,\chi}-I_{0,\chi})f\|_{L^2(\Gamma)}
 \leq C|a|\|f\|_{H^{-1/2}(M_1)}.
\end{equation}
Combining \eqref{eq:unattenuated-stability} and
\eqref{eq:attenuation-perturbation}, then taking $a_0$ small enough to
absorb the last term, yields
\[
 \|f\|_{H^{-1/2}(M_1)}
 \leq C\|I_{a,\chi}f\|_{L^2(\Gamma,d\mu)},\qquad |a|\leq a_0.
\]
The embedding $H^{-1/2}(M_1)\hookrightarrow H^{-\ell}(M_1)$, for every
\(\ell>1\), followed by the supported-norm equivalence above, gives
the $H^{-\ell}(M_0)$ estimate \eqref{eq:simple-visibility}.
Thus the conclusion is in exactly the space of
Assumption~\ref{ass:visibility}. Sharper mapping properties
are discussed in \cite{AssylbekovStefanov2018}.
\end{proof}

\begin{proposition}[Stability for small attenuation on a regular family]\label{prop:regular-ray}
Under the three hypotheses of Corollary~\ref{cor:regular}, there is
$a_0>0$ such that, for every $\ell>1$,
\begin{equation}\label{eq:regular-visibility}
 \|f\|_{H^{-\ell}(M_0)}
 \leq C_\ell\|I_{a,\chi}f\|_{L^2(\Gamma,d\mu)},
 \qquad |a|\leq a_0,\quad \supp f\subset K.
\end{equation}
\end{proposition}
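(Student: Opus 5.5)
The proof will follow the scheme of Proposition~\ref{prop:simple-ray}, with Lemma~\ref{lem:ray-mapping} in the regular-family form taking the place of simplicity. It has three steps: a stable estimate for the unattenuated transform, a perturbation bound in the attenuation, and passage to the supported $H^{-\ell}(M_0)$ norm.

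\emph{Step 1: the unattenuated transform.} For $f$ supported in $K$, hypothesis (i) says that every visit of a full ray with $\chi(z)\ne0$ to $\overline O\supset K$ lies in $(b(z),e(z))$. Hence the segment transform agrees with the full-ray transform up to the smooth positive factor $e^{ab(z)}$, which equals $1$ when $a=0$. Fix $\rho\in C_c^\infty(O)$ with $\rho=1$ near $K$. We may then regard $I_{0,\chi}f=T_1(\rho f)$ as the full-ray transform with weight $w=1$ of Lemma~\ref{lem:ray-mapping}. Hypothesis (ii) provides the smooth incoming parametrization, the nonconjugacy rank condition and conormal coverage of $T^*\overline O$. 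Hypothesis (iii) provides injectivity on smooth functions supported in $K$, since these belong to $L^2$. The last assertion of Lemma~\ref{lem:ray-mapping} therefore gives
\[
 \|f\|_{H^{-1/2}(M_1)}\leq C\|I_{0,\chi}f\|_{L^2(\Gamma,d\mu)},\qquad \supp f\subset K.
\]
This uses the G\aa rding estimate for the elliptic order $-1$ normal operator together with the compactness and contradiction argument of that lemma, which removes the $H^{-N}$ remainder. The rays outside $\Gamma$, including trapped ones and ones with conjugate points, never appear, because the smooth cutoff $\chi$ stays in the amplitude.

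\emph{Step 2: perturbation in the attenuation.} For $|a|\leq1$, the mean value formula gives $(I_{a,\chi}-I_{0,\chi})f=a\int_0^1J_{\theta a,\chi}f\,d\theta$. Here $J_{\eta,\chi}$ is the full-ray transform with weight $w_\eta(z,r)=(r-b(z))e^{\eta(r-b(z))}$, after inserting the cutoff $\rho$. This identification is again justified by the segment-exhaustion property in (i). Since $b$ is smooth on a neighborhood of $\supp\chi$, the weights $w_\eta$ lie in a bounded set of smooth seminorms uniformly for $|\eta|\leq1$. The mapping part of Lemma~\ref{lem:ray-mapping} then gives $\|J_{\eta,\chi}f\|_{L^2}\leq C\|f\|_{H^{-1/2}(M_1)}$, uniformly in $\eta$. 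Consequently
\[
 \|(I_{a,\chi}-I_{0,\chi})f\|_{L^2(\Gamma,d\mu)}\leq C_1|a|\,\|f\|_{H^{-1/2}(M_1)}.
\]
Choose $a_0\leq1$ with $CC_1a_0\leq1/2$. Absorbing the perturbation term yields $\|f\|_{H^{-1/2}(M_1)}\leq 2C\|I_{a,\chi}f\|_{L^2}$ for all $|a|\leq a_0$.

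\emph{Step 3: passage to the supported norm.} For every $\ell>1$, compose with the embedding $H^{-1/2}(M_1)\hookrightarrow H^{-\ell}(M_1)$, and then with the supported-norm equivalence between $H^{-\ell}(M_1)$ and $H^{-\ell}(M_0)$ for distributions supported in $K$. This gives \eqref{eq:regular-visibility}.

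I expect the main obstacle to be the justification, within Step 1, that Lemma~\ref{lem:ray-mapping} really applies on a non-simple $M_0$. The point to check is that the localized normal operator has no off-diagonal Fourier integral contributions. These could come from other components of a selected ray inside $M_0$, or from returns of the ray to $\overline O$. Hypothesis (i) disposes of both: the other components are disjoint from $\overline O$, and all returns lie in the selected segment. Nonconjugacy along the full rays, from (ii), makes the map from direction and travel time to the second point a local diffeomorphism, so the kernel is smooth off the diagonal. I would verify this carefully before invoking the elliptic calculus, and check that the constants depend only on finitely many seminorms of $\chi$, $b$ and the density.
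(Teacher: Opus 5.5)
Your proposal is correct and follows the paper's proof: the unattenuated $H^{-1/2}$ estimate comes from the elliptic regular-family normal operator via G\aa rding and the compactness argument, with injectivity (iii) removing the remainder. Then the mean-value perturbation bound with weight $(r-b(z))e^{\eta(r-b(z))}$, justified by segment exhaustion, is absorbed for an $a_0$ independent of $\ell$, and Sobolev embedding followed by the supported-norm equivalence finishes the argument, exactly as in the paper.
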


\begin{proof}
Put $I_{0,\chi}=\chi I_0$ and
$N_{0,\chi}=I_0^*\chi^2I_0$.  The conormal coverage and absence of
conjugate points in Corollary~\ref{cor:regular}(ii) imply that
$N_{0,\chi}$ is an elliptic pseudodifferential operator of order $-1$
near $T^*K\setminus0$; this is the normal-operator construction for a
regular family in \cite[Proposition~2]{FrigyikStefanovUhlmann2008},
with its hypotheses and mapping consequences checked in
Lemma~\ref{lem:ray-mapping}. Exactly the parametrix,
G\aa rding, and compactness argument used above gives
\[
 \|f\|_{H^{-1/2}(M_1)}
 \leq C\|I_{0,\chi}f\|_{L^2(\Gamma)},\qquad \supp f\subset K.
\]
Indeed, a null-vector arising in the compactness argument is first made
smooth by ellipticity and then vanishes by the injectivity hypothesis in
Corollary~\ref{cor:regular}(iii). Uniformity is over the fixed compact
parameter support: transverse flow charts form a finite cover; the
off-diagonal exponential-map differentials have a positive least singular
value on each compact piece separated from zero travel time; near zero,
the geodesic polar expansion has uniform smooth coefficients. Coverage
gives a positive minimum of the principal symbol on the unit cosphere
over $\overline O$. Together with the fixed density and cutoff seminorms,
these bounds control the parametrix and mapping constants. Removing the
compact term uses injectivity of this fixed family; no uniform inverse
over arbitrary changes of the family is asserted.

For small $a$, the weights $e^{ar}$ form a uniformly smooth perturbation
of the unit weight on the compact selected family.  Repeating
\eqref{eq:attenuation-perturbation}, or equivalently using the perturbation
stability of regular weighted systems in
\cite{FrigyikStefanovUhlmann2008}, gives
\[
 \|(I_{a,\chi}-I_{0,\chi})f\|_{L^2(\Gamma)}
 \leq C|a|\|f\|_{H^{-1/2}(M_1)}.
\]
Absorption proves the $H^{-1/2}$ estimate uniformly for \(|a|\leq a_0\),
and Sobolev embedding first gives the corresponding $H^{-\ell}(M_1)$
bound. The supported-norm equivalence established above then proves
\eqref{eq:regular-visibility} in precisely the norm required by
Assumption~\ref{ass:visibility}. The support condition \eqref{eq:segment-exhaustion} is used here: for $\supp f\subset K$,
$I_a^{(1)}f=e^{ab}I_af$ exactly, and there are no discarded visits
to the support. This is why the full-ray calculus applies to the
segment data produced by the beams.
\end{proof}

\begin{proof}[Proof of Theorem~\ref{cor:simple} and Corollary~\ref{cor:regular}]
Propositions~\ref{prop:simple-ray} and \ref{prop:regular-ray}, respectively,
verify Assumption~\ref{ass:visibility}.  Given $s<1/8$, choose
\(\ell>1\) sufficiently close to one that
\(s<1/[4(\ell+1)]\), and apply Theorem~\ref{thm:main}.
\end{proof}

\begin{remark}[A non-simple manifold with a regular family of geodesics]
An explicit example is available when $d=\dim M_0\geq3$.
Take $M_1=\overline{B(0,4)}$, $M_0=\overline{B(0,3)}$, and
$K=\overline{B(0,1/4)}$ in $\mathbb R^d$. Keep the metric Euclidean
on the slab $|x_d|<2$. Select oriented straight chords with distance
from the origin less than $1/2$ and $|v_d|<1/8$. Such chords stay
in $|x_d|<1$, since their closest point has height below $1/2$
and their arclength from that point is at most four. Hence they
remain geodesics regardless of metric changes outside the slab.
Choose $\chi$ equal to one on chords at distance at most $1/3$
with $|v_d|\leq1/16$, and compactly supported in the indicated open family.

Every selected chord has a unique transverse intersection interval
with $M_0$, and compactness gives the uniform bounds in
Definition~\ref{def:beam-regular}. Set $O=B(0,3/10)$.
For any nonzero covector $\xi$ at $x\in\overline O$, there is a unit
vector $v$ perpendicular to both $\xi$ and $e_d$, since $d\geq3$.
The chord through $(x,v)$ has $\chi=1$, proving conormal coverage.
Injectivity follows from the Euclidean Fourier slice theorem: zero
data give vanishing of $\widehat f$ on the planes $v^\perp$ for the
open band of measured directions. For compactly supported $L^2$ data,
$\widehat f$ is continuous; the almost-everywhere statement supplied
by the slice theorem extends to every direction in this open band.
The preceding perpendicular-vector argument covers every frequency.
Thus $\widehat f=0$ and $f=0$.

To make $M_0$ non-simple, choose a small ball contained in
$B(0,3)\cap\{x_d>2\}$. Inside it embed a circle, and identify a
tubular neighborhood with a thin tube about a great circle of a
round sphere. Prescribe the pulled-back round metric on a smaller
tube and smoothly interpolate with the Euclidean metric outside the
larger tube; convex combinations of positive definite metric tensors
remain positive definite. The central circle is a closed geodesic
and stays in the round region forever, so it is trapped. To check
conjugacy explicitly, let the round sphere have radius $R_*>0$ and
parameterize the central circle by arclength $s$. For a nonzero parallel
normal field $E(s)$, the field
\[
 J(s)=\sin(s/R_*)E(s),\qquad 0\leq s\leq\pi R_*,
\]
solves the normal Jacobi equation $D_s^2J+R_*^{-2}J=0$ and vanishes
at $s=0$ and $s=\pi R_*$. The entire central half-circle, together
with an open tubular neighborhood, lies where the round metric is
unchanged. Thus its connection and curvature, and hence this nonzero
Jacobi field and its conjugate endpoints, are unaffected by the exterior
interpolation. These modifications
are disjoint from every selected chord. All three conditions of
Corollary~\ref{cor:regular} therefore hold although $M_0$ is non-simple.
This example is asserted for $d\geq3$ only.
\end{remark}

\begin{remark}
The regular-family result is genuinely non-simple: only the selected curves
covering $T^*K\setminus0$ must be free of conjugate points.  Curves outside
that family may have conjugate points or be trapped.  The regularity and stability conclusions persist under sufficiently
small smooth perturbations of the curve system, weight, and cutoff in
the finite differentiability topology required by the normal-operator estimates \cite{FrigyikStefanovUhlmann2008}.

The proof deliberately uses only small constant attenuations.  On simple
surfaces, injectivity is known for arbitrary smooth scalar attenuations
\cite{SaloUhlmann2011}.  Quantifying the dependence of the inverse on a large
attenuation could improve the controlled Euclidean frequency interval, but it
does not automatically remove the second logarithm because the stability
constant itself grows with the attenuation; compare \cite{CaroSalo2014}.
\end{remark}

\appendix
\renewcommand{\thetheorem}{\Alph{section}.\arabic{theorem}}
\section{The Carleman calculation}\label{app:carleman}

For completeness, we record the bulk and boundary calculation behind
Proposition~\ref{prop:carleman}.  It is enough to treat $q=0$ and the forward
operator.  The bounded potential is absorbed for small $h$.  The transpose estimate follows from time reversal and the product
coordinate $y_1=-x_1$, as detailed below; the convexification is performed
in that coordinate.

Fix $\delta>0$ and convexify the weight $\phi_h$ by
\begin{equation}\label{eq:convexified-weight}
 \phi_{h,\delta}=\phi_h-\frac{h}{2\delta}x_1^2.
\end{equation}
For a real-valued $v$ with $v|_\Sigma=0$ and $v(0)=0$, set
\[
 \mathcal P_\delta v
 =h^2e^{-\phi_{h,\delta}/h}(\partial_t-\Delta_g)
       e^{\phi_{h,\delta}/h}v.
\]
The general conjugation identity is
\begin{align}\label{eq:conjugation-calculation}
 &h^2e^{-\psi/h}(\partial_t-\Delta_g)e^{\psi/h}v\notag\\
 &\quad=h^2\partial_tv-h^2\Delta_gv
 -2h\langle\dd\psi,\dd v\rangle_g
 +\big(h\partial_t\psi-|\dd\psi|_g^2-h\Delta_g\psi\big)v.
\end{align}
For \(\psi=\phi_{h,\delta}\), put
\(b(x_1)=1-hx_1/\delta\).  Since the metric is a product,
\[
 \dd\psi=b\,\dd x_1,\qquad
 h\partial_t\psi=\beta^2,\qquad
 \Delta_g\psi=-\frac h\delta.
\]
Substitution into \eqref{eq:conjugation-calculation}, with the harmless
zeroth-order term \(4h^2/\delta\) assigned to the first part, gives the
following splitting.
We split $\mathcal P_\delta=\mathcal P_1+\mathcal P_2$, where
\begin{align*}
 \mathcal P_1&=h^2\partial_t
 -2h\left(1-\frac h\delta x_1\right)\partial_{x_1}
 +\frac{4h^2}{\delta},\\
 \mathcal P_2&=-h^2\Delta_g+K(x_1),\\
 K(x_1)&=-(1-\beta^2)-\frac{h^2}{\delta^2}x_1^2
 +\frac{2h}{\delta}x_1-\frac{3h^2}{\delta}.
\end{align*}
Write $(\mathcal P_1v,\mathcal P_2v)=\sum_{j=1}^6J_j$, according to the
three terms in $\mathcal P_1$ and the two terms in $\mathcal P_2$.  Integration
by parts gives
\begin{align}
 J_1&=(h^2\partial_tv,-h^2\Delta_gv)
 =\frac{h^4}{2}\|\nabla_gv(T)\|_{L^2(M)}^2,\label{eq:J1}\\
 J_2&=(h^2\partial_tv,Kv)
 =-\frac{1-\beta^2}{2}h^2\|v(T)\|_{L^2(M)}^2
 +O(h^3)\|v(T)\|_{L^2(M)}^2.\label{eq:J2}
\end{align}
Indeed, \(J_1\) follows by integrating first in space and then in time;
the initial contribution vanishes because \(v(0)=0\).  Since \(K\) is
independent of \(t\),
\[
 J_2=\frac{h^2}{2}\int_MK(x_1)|v(T,x)|^2\dd V_g,
\]
which gives \eqref{eq:J2} because
\(K=-(1-\beta^2)+O(h)\) on the bounded \(x_1\)-projection of \(M\).

Since $v=0$ on $\Sigma$,
$\nabla_gv=(\partial_\nu v)\nu$ and
$\partial_{x_1}v=(\partial_\nu x_1)\partial_\nu v$ there.  For the
first-order/Laplacian cross term, use that \(\partial_{x_1}\) is parallel
in the product metric and integrate twice by parts:
\begin{align*}
 J_3
 &=2h^3\int_Qb(\partial_{x_1}v)\Delta_gv\dd V_g\dd t\\
 &=h^3\int_\Sigma b\,\partial_\nu x_1|\partial_\nu v|^2\dd S_g\dd t
 +\frac{2h^4}{\delta}\|\partial_{x_1}v\|_{L^2(Q)}^2
 -\frac{h^4}{\delta}\|\nabla_gv\|_{L^2(Q)}^2.
\end{align*}
There is no additional boundary term involving \(v\), since its lateral
trace is zero.  Similarly,
\[
 J_4=(-2hb\partial_{x_1}v,Kv)
 =h\int_Q\partial_{x_1}(bK)|v|^2\dd V_g\dd t,
\]
whereas direct spatial integration gives
\[
 J_5=\frac{4h^4}{\delta}\|\nabla_gv\|_{L^2(Q)}^2,
 \qquad
 J_6=\frac{4h^2}{\delta}\int_QK(x_1)|v|^2\dd V_g\dd t.
\]
Expanding \(\partial_{x_1}(bK)\) and \(K\) now yields
\begin{align}
 J_3={}&h^3\int_\Sigma
 \left(1-\frac h\delta x_1\right)\partial_\nu x_1
 |\partial_\nu v|^2\dd S_g\dd t
 +\frac{2h^4}{\delta}\|\partial_{x_1}v\|_{L^2(Q)}^2
 -\frac{h^4}{\delta}\|\nabla_gv\|_{L^2(Q)}^2,
 \label{eq:J3}\\
 J_4={}&\frac{3-\beta^2}{\delta}h^2\|v\|_{L^2(Q)}^2
 -\frac{6h^3}{\delta^2}\int_Qx_1|v|^2
 +\frac{3h^4}{\delta^3}\int_Qx_1^2|v|^2
 +\frac{3h^4}{\delta^2}\|v\|_{L^2(Q)}^2,\label{eq:J4}\\
 J_5={}&\frac{4h^4}{\delta}\|\nabla_gv\|_{L^2(Q)}^2,\label{eq:J5}\\
 J_6={}&-\frac{4(1-\beta^2)}{\delta}h^2\|v\|_{L^2(Q)}^2
 +\frac{8h^3}{\delta^2}\int_Qx_1|v|^2
 -\frac{4h^4}{\delta^3}\int_Qx_1^2|v|^2
 -\frac{12h^4}{\delta^2}\|v\|_{L^2(Q)}^2.\label{eq:J6}
\end{align}
The leading bulk coefficient in $J_4+J_6$ is
\[
 (3-\beta^2)-4(1-\beta^2)=3\beta^2-1>0.
\]
Since the $x_1$ projection of $M$ is bounded, the lower-order terms are
absorbed for small $h$.  After decreasing $h_0$ so that
$1/2\leq1-hx_1/\delta\leq3/2$, we obtain
\begin{align}\label{eq:cross-lower}
 (\mathcal P_1v,\mathcal P_2v)
 &\geq -Ch^2\|v(T)\|_{L^2(M)}^2
 +ch^2\|v\|_{L^2(Q)}^2
 +ch^4\|\nabla_gv\|_{L^2(Q)}^2\notag\\
 &\quad+h^3\int_\Sigma
 \left(1-\frac h\delta x_1\right)\partial_\nu x_1
 |\partial_\nu v|^2\dd S_g\dd t.
\end{align}
Combining this with
\[
 \|\mathcal P_\delta v\|^2
 =\|\mathcal P_1v\|^2+\|\mathcal P_2v\|^2
 +2(\mathcal P_1v,\mathcal P_2v)
\]
proves the convexified estimate.  Finally, put
$z=e^{-x_1^2/(2\delta)}v$.  This fixed multiplier and its first derivatives
are bounded above and below.  Moreover,
\[
 e^{\phi_h/h}z=e^{\phi_{h,\delta}/h}v,
\]
so the conjugated operators for the convexified and original weights differ
only by this bounded multiplier, while
\[
 \partial_\nu\phi_{h,\delta}
 =\left(1-\frac h\delta x_1\right)\partial_\nu x_1.
\]
Thus the boundary signs and weighted boundary terms are uniformly comparable,
which proves \eqref{eq:carleman-forward} for \(q=0\).  For bounded \(q\),
the additional term is \(h^2qz\); its squared norm is at most
\(Ch^4\|z\|_{L^2(Q)}^2\) and is absorbed by the bulk term for small \(h\).
The complex-valued estimate follows by applying the calculation to real and
imaginary parts.  For the transpose estimate use $s=T-t$ and $y_1=-x_1$.
The forward operator in $(s,y_1,x')$ is the transpose operator in
$(t,x_1,x')$, and its linear weight satisfies
\[
 \frac{y_1}{h}+\frac{\beta^2s}{h^2}
 =-\Phi_h(t,x)+\frac{\beta^2T}{h^2}.
\]
The final constant cancels in conjugation. Apply the already proved
forward estimate on the reflected copy of $M$, with potential
$q(T-s,-y_1,x')$. Reflection is a coordinate isometry onto this copy;
no invariance of the original domain is assumed. The convexified weight
used here is $y_1+\beta^2s/h-hy_1^2/(2\delta)$, which becomes
$-\phi_h-hx_1^2/(2\delta)$ up to a constant, not
$-\phi_{h,\delta}$. The same fixed Gaussian multiplier removes this
convexification. The initial zero trace becomes $z(T)=0$, and the normal
sign reverses because $\partial_\nu y_1=-\partial_\nu x_1$.
This proves \eqref{eq:carleman-adjoint} with the required powers of $h$.

\section{Zero extension and Sobolev interpolation}\label{app:extension}

\begin{lemma}[Zero extension in Hilbert-space-valued Sobolev spaces]\label{lem:zero-extension}
Let $p\in W^{1,\infty}(Q)$ have zero spatial boundary trace. Its zero
extension $p^0$ to $\mathbb R_t\times\mathbb R_{x_1}\times M_0$ satisfies,
for every $0<\lambda<1/2$,
\[
 \|p^0\|_{H^\lambda(\mathbb R^2;L^2(M_0))}
 +\|p^0\|_{L^2(\mathbb R^2;H^1(M_0))}
 \leq C_\lambda\|p\|_{W^{1,\infty}(Q)}.
\]
No vanishing time-endpoint condition is imposed.
\end{lemma}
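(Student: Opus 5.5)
The plan is to split the statement into two independent bounds, one for the spatial part and one for the fractional part in $(t,x_1)$. Both start from the fact that $p\in W^{1,\infty}(Q)$ has bounded support in $(t,x_1)$: $Q=(0,T)\times M$ and $M\Subset\mathbb R\times M_0^{\mathrm{int}}$.

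\emph{Step 1: the $L^2(\mathbb R^2;H^1(M_0))$ bound.} Extend $p$ by zero across $(0,T)\times\partial M$ to $(0,T)\times\mathbb R\times M_0$. Because the lateral trace vanishes, this spatial zero extension is Lipschitz. I would show this in boundary-flattened charts: a $W^{1,\infty}$ function with zero trace on a hyperplane is Lipschitz on each side and continuous across, so its zero extension is Lipschitz. Its weak gradient is the zero extension of $\nabla p$, with $L^\infty$ norm at most $C\|p\|_{W^{1,\infty}}$. This is the observation already used in Section~\ref{sec:ray}. Extending next by zero in $t$ does not affect spatial derivatives. The set $[0,T]\times\pi_{x_1}(M)\times M_0$ has finite measure, so $\|p^0\|_{L^2(\mathbb R^2;H^1(M_0))}\le C\|p\|_{W^{1,\infty}(Q)}$ follows immediately.

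\emph{Step 2: the $H^\lambda(\mathbb R^2;L^2(M_0))$ bound.} Let $\tilde p$ be the spatial zero extension from Step 1, defined on $(0,T)\times\mathbb R_{x_1}\times M_0$. It is Lipschitz in $x_1$ and in $t$ with values in $L^2(M_0)$, and it has compact $x_1$-support. I would write $p^0=\mathbf 1_{(0,T)}(t)\,\tilde p$. Here $\tilde p$ may be taken as a Lipschitz (hence $H^1$) extension to all $t$, for instance by even reflection at $t=0,T$ followed by a smooth time cutoff. That extension lies in $H^1(\mathbb R^2;L^2(M_0))$ with norm $\le C\|p\|_{W^{1,\infty}}$. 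The key fact is that multiplication by the indicator $\mathbf 1_{(0,T)}(t)$ is bounded on $H^\lambda(\mathbb R_t;\mathcal H)$ for $0\le\lambda<1/2$ and any Hilbert space $\mathcal H$. This is the classical Lions--Magenes/Triebel result. For Hilbert-valued functions it reduces to the scalar case, either by expanding in an orthonormal basis of $\mathcal H$ or through the Gagliardo seminorm together with the fractional Hardy inequality $\int_0^T t^{-2\lambda}\|u(t)\|^2\,dt\le C\|u\|_{H^\lambda}^2$, valid for $\lambda<1/2$. I would then pass from the one-variable statement to $\mathbb R^2_{t,x_1}$ by using the equivalent norm $\|F\|_{H^\lambda(\mathbb R^2)}^2\simeq\|F\|_{L^2_{x_1}H^\lambda_t}^2+\|F\|_{L^2_tH^\lambda_{x_1}}^2$, which follows from $(1+\tau^2+\xi^2)^\lambda\simeq(1+\tau^2)^\lambda+(1+\xi^2)^\lambda$. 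For the first term, apply the indicator bound for each fixed $x_1$ with $\mathcal H=L^2(M_0)$ and then integrate in $x_1$. The second term is handled directly: for each $t$, $p^0$ is Lipschitz in $x_1$ into $L^2(M_0)$ with compact support, so $\|p^0(t)\|_{H^1_{x_1}L^2}\le C\|p\|_{W^{1,\infty}}$, and $H^1\subset H^\lambda$.

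\emph{Main obstacle.} The only nontrivial point is the indicator multiplier bound, equivalently the Hardy inequality at the endpoints $t=0,T$. It fails at $\lambda=1/2$, which explains the restriction on $\lambda$ and the loss of the constant $C_\lambda$. I would prove it through the Gagliardo form. The cross terms $\int_0^T\int_{\mathbb R\setminus(0,T)}\|u(t)\|^2|t-s|^{-1-2\lambda}\,ds\,dt$ reduce to $C\int_0^T\bigl(t^{-2\lambda}+(T-t)^{-2\lambda}\bigr)\|u(t)\|^2\,dt$. The vector-valued Hardy inequality, proved exactly as in the scalar case since only norms enter, bounds this by $C_\lambda\|u\|_{H^\lambda(\mathbb R;\mathcal H)}^2$.
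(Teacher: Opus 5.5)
Your proposal is correct and follows essentially the same route as the paper. Both use the Lipschitz spatial zero extension coming from the vanishing lateral trace, the exact Gagliardo cross term $\frac1\lambda\int_0^T\|v(t)\|_{\mathcal H}^2\{t^{-2\lambda}+(T-t)^{-2\lambda}\}\,dt$ for the temporal zero extension, and the splitting $(1+\tau^2+\xi^2)^\lambda\leq C_\lambda\{(1+\tau^2)^\lambda+(1+\xi^2)^\lambda\}$ to combine the two directions.

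The one difference is how you bound that endpoint integral. You use the fractional Hardy inequality, i.e.\ the indicator-multiplier theorem. The paper only needs that $v$ is bounded in $\mathcal H$, via $H^1(0,T;\mathcal H)\hookrightarrow C([0,T];\mathcal H)$, and that $t^{-2\lambda}$ is integrable for $2\lambda<1$. So your reflection extension and Hardy step are heavier than necessary here, though valid.
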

\begin{proof}
For almost every $t$, $p(t,\cdot)\in H_0^1(M)$. Spatial zero
extension commutes with weak first derivatives: the boundary term in
integration by parts is zero. In boundary charts the same argument for
bounded derivatives gives a $W^{1,\infty}$ spatial extension. Since
the extension operator is fixed in time, it also commutes with
$\partial_t$ on $(0,T)$. Thus the spatially extended function $v$
belongs to $H^1(0,T;L^2)$ and $L^2(0,T;H^1)$, with fixed compact
spatial support and controlled norms.

For a Hilbert-valued $v\in H^1(0,T;\mathcal H)$, write $E_0v$ for
temporal zero extension. Direct integration gives the exact cross term
in its Gagliardo seminorm:
\begin{align*}
 [E_0v]_{H^\lambda(\mathbb R;\mathcal H)}^2
 &= [v]_{H^\lambda(0,T;\mathcal H)}^2\\
 &\quad+\frac1\lambda\int_0^T\|v(t)\|_{\mathcal H}^2
       \{t^{-2\lambda}+(T-t)^{-2\lambda}\}\,dt.
\end{align*}
The first term is bounded by $C\|v\|_{H^1}^2$; the second is bounded
by the same quantity because $H^1(0,T;\mathcal H)\hookrightarrow
C([0,T];\mathcal H)$ and $2\lambda<1$. This proves the temporal
extension estimate and also explains its strict upper endpoint.
Spatial $H^1$ regularity is unchanged by temporal zero extension.
Finally,
\[
 (1+\tau^2+\xi^2)^\lambda\leq C_\lambda
 \{(1+\tau^2)^\lambda+(1+\xi^2)^\lambda\}
\]
and Plancherel combine the temporal fractional bound and the spatial
$H^1$ bound into the claimed joint bound. The possible jumps at $0,T$
are never differentiated as $L^2$ time derivatives after zero extension.
\end{proof}

\begin{lemma}[Interpolation and restriction]\label{lem:mixed-interpolation}
For functions $v$ supported in the fixed $K\Subset M_0^{\rm int}$,
\[
 \|v\|_{L^2H^{-1}}\leq C
 \|v\|_{L^2H^{-\ell}}^{2/(\ell+1)}
 \|v\|_{L^2H^1}^{(\ell-1)/(\ell+1)},\qquad \ell>1.
\]
Furthermore, restriction from the product neighborhood into $Q$ is
bounded from $L^2_{t,x_1}H^{-1}_{x'}$ to $H^{-1}(Q)$.
\end{lemma}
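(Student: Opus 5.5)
The plan is to prove the two assertions of Lemma~\ref{lem:mixed-interpolation} separately. In both, $v$ is treated as a function of $(t,x_1)$ with values in distributions on $M_0$ supported in $K$.

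\emph{Interpolation.} Fix the cutoff $\rho\in C_c^\infty(M_0^{\mathrm{int}})$ equal to one near $K$, as in the supported-norm paragraph. By the supported-norm equivalence, the $H^{s}(M_0)$ norms of functions supported in $K$ are comparable to the $H^s(M_1)$ norms, and also to the $H^s$ norms on a fixed closed extension, for $s\in\{-\ell,-1,1\}$. On a closed manifold we take $\Lambda=(1-\Delta_{g_0})^{1/2}$ and define $\|u\|_{H^s}=\|\Lambda^s u\|_{L^2}$. With $\theta=2/(\ell+1)$ we have $-1=\theta(-\ell)+(1-\theta)\cdot 1$. Expanding in eigenfunctions and applying H\"older with exponents $1/\theta$ and $1/(1-\theta)$ to
\[
 \sum_k\lambda_k^{-1}|u_k|^2=\sum_k(\lambda_k^{-\ell}|u_k|^2)^{\theta}(\lambda_k|u_k|^2)^{1-\theta}
\]
gives the pointwise-in-$(t,x_1)$ inequality $\|u\|_{H^{-1}}\le\|u\|_{H^{-\ell}}^{\theta}\|u\|_{H^1}^{1-\theta}$. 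We square this and integrate over $(t,x_1)$. H\"older in $(t,x_1)$ with the same exponents $1/\theta$ and $1/(1-\theta)$ then yields the $L^2H^{-1}$ bound with the stated exponents $2/(\ell+1)$ and $(\ell-1)/(\ell+1)$.

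\emph{Restriction.} For $\phi\in H_0^1(Q)$, extend $\phi$ by zero to the product neighborhood. The extension lies in $L^2_{t,x_1}H^1_{x'}$ with norm at most $\|\phi\|_{H^1(Q)}$. For $v$ supported in the fixed compact set, the pairing $\langle v,\phi\rangle$ is therefore bounded by $\|v\|_{L^2H^{-1}}\|\phi\|_{L^2H^1}$, using the duality $(H^{-1})'=H^1$ on the transversal factor fiberwise and Cauchy--Schwarz in $(t,x_1)$. Taking the supremum over $\phi$ proves that the restriction is bounded into $H^{-1}(Q)=(H_0^1(Q))'$. This agrees with the symbol comparison $(1+|\zeta|^2+\tau^2+\xi^2)^{-1}\le(1+|\zeta|^2)^{-1}$ used in the main proof.

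\emph{Main obstacle.} The only delicate point is that the spectral $H^s(M_0)$ norms must coincide, up to constants, with the supported Sobolev norms for distributions supported in $K$, on a manifold with boundary. This is handled by the fixed-cutoff duality argument already recorded before Proposition~\ref{prop:simple-ray}. The constants then depend only on $K$, $\rho$, $\ell$ and the geometry.
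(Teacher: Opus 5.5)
Your proposal is correct and follows the paper's proof essentially step for step. Both arguments reduce to spectral norms of $(1-\Delta)^{1/2}$ on a closed extension via the supported-norm equivalence, apply H\"older's inequality in the spectral variable and in $(t,x_1)$, and prove the restriction bound by zero extension of $\psi\in H_0^1(Q)$ followed by fiberwise duality and Cauchy--Schwarz. The only differences are cosmetic: the paper applies H\"older once, jointly over spectral measure times Lebesgue measure, where you do it in two steps, and your $\theta$ is the paper's $1-\theta$.
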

\begin{proof}
Use a fixed closed extension of $M_0$ and cutoffs equal to one on $K$;
the corresponding supported Sobolev norms are equivalent. Let
$\Lambda=(1-\Delta)^{1/2}$ on that extension and
$\theta=(\ell-1)/(\ell+1)$. The equality
$-1=-\ell(1-\theta)+\theta$ and H\"older's inequality in the spectral
measure of $\Lambda$, jointly with Lebesgue measure in $(t,x_1)$,
give the displayed estimate with powers $1-\theta$ and $\theta$.
This use of supported functions avoids an unqualified interpolation
identity between incompatible boundary realizations of Sobolev spaces.
For restriction, extend $\psi\in H_0^1(Q)$ by zero to the product
neighborhood. Then
\[
 |\langle v,\psi\rangle|\leq
 \|v\|_{L^2H^{-1}}\|\psi^0\|_{L^2H^1}
 \leq C\|v\|_{L^2H^{-1}}\|\psi\|_{H_0^1(Q)}.
\]
Taking the supremum over the unit ball proves the final assertion.
\end{proof}

\section*{Funding}
This work was supported by the National Natural Science Foundation of China
[grant number 12171178; funder DOI: 10.13039/501100001809].

\section*{Declaration of competing interest}
The authors declare that they have no competing interests.

\section*{Data availability}
No data were used for the research described in this article.

\section*{CRediT authorship contribution statement}
\textbf{Yujian Zheng, Zhiwen Duan and Shiqi Jing:} Conceptualization,
Methodology, Formal analysis, Validation, Writing -- original draft,
Writing -- review and editing. All three authors contributed equally.

\section*{Declaration of generative AI and AI-assisted technologies in the\texorpdfstring{\\}{ }manuscript preparation process}
During the preparation of this work, the authors used OpenAI Codex
to improve the organization, language, readability, and \LaTeX{}
presentation of the manuscript, to assist with bibliographic checks,
and to help draft and refine technical arguments. The authors are
responsible for reviewing and verifying all output, independently
checking the mathematical arguments and references, and editing the
manuscript as needed. They take full responsibility for the content
of the publication.

\end{document}